\documentclass[11pt,a4paper,leqno]{amsart}
\usepackage{a4wide} 
\setlength{\textheight}{23cm}
\setlength{\textwidth}{16cm}
\usepackage{latexsym}
\usepackage{amsmath}
\usepackage{amssymb} 
\usepackage{stackrel} 
\usepackage{color}
\usepackage{graphicx}
\usepackage{hyperref}
\usepackage{amsthm,amstext,units,enumerate}

\newtheorem{lemma}{Lemma}
\newtheorem{prop}{Proposition}
\newtheorem{theo}{Theorem}
\newtheorem{corol}{Corollary}
\theoremstyle{definition}
\newtheorem{defin}{Definition}

\pagestyle{myheadings}

\newcommand{\C}{\mathbb{C}}
\newcommand{\N}{\mathbb{N}}
\newcommand{\R}{\mathbb{R}}

\newcommand{\Oo}{\mathcal{O}}
\newcommand{\Bo}{\mathcal{B}}
\newcommand{\Rr}{\mathcal{R}}
\newcommand{\EE}{\mathbb{E}}
\newcommand{\M}{\mathbb{M}}


\begin{document}
\title[Multisummability singularly perturbed moment differential equations]{Multisummability of formal solutions for a family of generalized singularly perturbed moment differential equations}
\author{Alberto Lastra}
\address{Departamento de F\'isica y Matem\'aticas\\
University of Alcal\'a\\
Ap. de Correos 20, E-28871 Alcal\'a de Henares (Madrid), Spain}
\email{alberto.lastra@uah.es}
\author{S{\l}awomir Michalik}
\address{Faculty of Mathematics and Natural Sciences,
College of Science\\
Cardinal Stefan Wyszy\'nski University\\
W\'oycickiego 1/3,
01-938 Warszawa, Poland}
\email{s.michalik@uksw.edu.pl}
\urladdr{\url{http://www.impan.pl/~slawek}}
\author{Maria Suwi\'nska}
\address{Faculty of Mathematics and Natural Sciences,
College of Science\\
Cardinal Stefan Wyszy\'nski University\\
W\'oycickiego 1/3,
01-938 Warszawa, Poland}
\email{m.suwinska@op.pl}
\date{}
\keywords{multisummability, formal solution, moment estimates, moment derivatives, moment differential equations, singular perturbation}
\subjclass[2020]{34K26, 34K41, 34E10, 34M30}
\begin{abstract}

The notion of moment differentiation is extended to the set of generalized multisums of formal power series via an appropriate integral representation and accurate estimates of the moment derivatives. 

The main result is applied to characterize generalized multisummability of the formal solution to a family of singularly perturbed moment differential equations in the complex domain, broadening widely the range of singularly perturbed functional equations to be considered in practice, such as singularly perturbed differential equations and singularly perturbed fractional differential equations.
   
\end{abstract}

\maketitle
\thispagestyle{empty}

\section{Introduction}

The main aim of this work is to give a step forward in the theory of summability of formal solutions to functional equations in the complex domain. More precisely, we deal with the so-called moment (partial) differential equations, in which the operators known as moment derivatives act on the unknown function. 

The main advances in the present study are twofold: on the one hand, we provide an integral representation of the moment derivatives of generalized sums of formal power series which can be extended to an infinite sector satisfying some generalized exponential growth. In addition to this, we describe the dependence of such moment derivatives with respect to three elements, namely the moment sequence, the sequence involved in the asymptotic representation of the sum, and also the generalized exponential growth at infinity (Theorem~\ref{teo2}). Consequently, a novel definition of moment derivatives acting on generalized multisums of formal power series makes sense, and induces many possible applications in the theory, as the following one. On the other hand, we apply the previous result to achieve the main result of this research, namely the multisummability of the formal solutions to certain family of singularly perturbed moment differential equations (Theorem~\ref{teopral}). More precisely, we prove that the formal solution to
\begin{equation}\label{eq:main:intro}
 \left\{
 \begin{aligned}
 \varepsilon^k a(z)\partial_{m_2,z}^p\omega(z,\varepsilon)-\omega(z,\varepsilon)&=\hat{f}(z,\varepsilon)\\
  \partial_{m_2,z}^j\omega(0,\varepsilon)&=\hat{\psi}_j(\varepsilon),\qquad j=0,\ldots, p-1,
 \end{aligned}
 \right.
\end{equation}
is multisummable along a certain appropriate multidirection $(d_1,d_2)\in\R^2$ with respect to the perturbation parameter $\varepsilon$ if and only if the forcing term $\hat{f}(z,\varepsilon)\in\C[[z,\varepsilon]]$ and the initial conditions $\hat{\psi}_{j}(\varepsilon)\in\C[[\varepsilon]]$, for $0\le j\le p-1$ are multisummable along the same multidirection. Here, $\varepsilon$ stands for a small complex parameter, $a(z)$ is a holomorphic function near the origin, $m_2$ is a sequence of moments, and $k,p$ are positive integers with $k<p$. The precise description of the elements involved in the problem is given in Section~\ref{secfinal}.

The study of moment differential equations is motivated by the versatility of the moment derivative operator, whose definition was initially put forward by W. Balser and M. Yoshino in~\cite{BY} for formal power series. Let $m=(m(p))_{p\ge0}$ be a sequence of positive real numbers. Then, the moment derivative $\partial_{m,z}:\C[[z]]\to\C[[z]]$ is defined by $\partial_{m,z}(z^{p})=\frac{m(p)}{m(p-1)}z^{p-1}$ for every positive integer $p$ and $\partial_{m,z}(1)=0$, defining moment derivatives for elements in $\C[[z]]$ by linearity. It is natural to extend the previous definition to a holomorphic function on some disc at the origin by identifying the function with its Taylor series at $z=0$. In addition to this, the authors proved that one can extend the definition of moment differentiation to the generalized sums in a direction of a formal power series, as the sum of the moment derivative of that formal power series (see Corollary 1 and Definition 10,~\cite{LMS2}). In this work, we further extend it to the generalized multisums of a formal  power series (Corollary~\ref{coro519} and Definition~\ref{defi624}).

The most suitable choice for $m$ is to be a moment sequence of certain Laplace-like operator. It is clear that the sequence $m_1=(p!)_{p\ge0}$ gives rise to the usual derivative when considering the moment operator $\partial_{m_1,z}$. Apart from that choice for $m$, many other derivations which appear in concrete applications can be represented as a moment derivative. For example, for every $k>0$, the sequence $m_{1/k}=(\Gamma(1+p/k))_{p\ge0}$ is associated with the Caputo fractional derivative $\partial_z^{1/k}$ via the relation $(\partial_{m_{1/k},z}\hat{f})(z^{1/k})=\partial_{z}^{1/k}(\hat{f}(z^{1/k}))$, valid for every $\hat{f}(z)\in\C[[z]]$ (see~\cite{michalik13}, Remark 3, for further details). Recently, many applications of Caputo derivatives appear in the literature such as~\cite{gomoyunov,KST}, also in the study of the asymptotic periodic solutions of evolution equations~\cite{RWF}, numerical studies, etc. Let $q\in(0,1)$. The sequence $m_q=([p]_q!)_{p\ge0}$, with $[p]!_q=[1]_q[2]_q\cdots[p]_q$ is known as the sequence of $q-$factorials,  where $[\ell]_q=\sum_{j=0}^{\ell-1}q^{j}$. It determines the moment differentiation which coincides with the $q-$derivative $D_{q,z}$ given by $D_{q,z}z^p=[p]_{q}z^{p-1}$ for every $p\in\N$. This moment differentiation is quite related to the dilation operator, appearing in the study of $q-$difference equations which is of great interest in the scientific community with interesting advances in the knowledge of the asymptotic behavior of the solutions of $q-$difference equations (see~\cite{lama,malek} among others, and the references therein). 

The interest of moment functional equations has increased in the last decade, and recent achievements have been reached in this concern. A first step was given in the seminal work~\cite{BY}, where the authors study the formal solutions and Gevrey estimates of their coefficients of linear moment partial differential equations with constant coefficients. The development of a more general theory through the construction of kernels for generalized summability by J. Sanz in~\cite{sanz} allows to enlarge the class of moment sequences considered, in the framework of strongly regular sequences. The sequence $m_{1/k}$ above belongs to such a family for every $k>0$, whereas $m_q$ does not. The so-called $1+$ level, appearing in the asymptotic study of difference equations is also related to a strongly regular sequence~\cite{i,i2}. 

After the seminal work~\cite{BY}, the second author gave answer to the problem of analyticity of such problems~\cite{michalik13}, via splitting of the characteristic equation with respect to one of its variables. We also refer to~\cite{m2} for a further study in the homogeneous situation, while dropping the condition of the convergence of the initial data. Further knowledge on the solutions to moment partial differential equations with constant coefficients is given in~\cite{mt} to study the growth properties and summability of the formal solutions. It is also worth mentioning the family of partial differential equations studied in~\cite{lastramaleksanz}, where the coefficients of the equation under study belong to certain functional spaces associated with functions whose derivatives are uniformly bounded in terms of some strongly regular sequence.

The last advances in the theory of the asymptotic behavior of solutions to moment functional equations have been obtained recently regarding the summability of certain families of moment integro-differential equations~\cite{LMS}, and also Maillet-type theorems~\cite{LMS0,su}. We mention the recent works by P. Remy in the study of partial differential equations~\cite{re} and integro-differential equations~\cite{re2} of a similar nature as those considered in these works.

Recent results on generalized multisummability of formal power series concerning different (comparable and nonequivalent) levels associated with ultraholomorphic classes achieved in~\cite{jkls} are applied in the present study to achieve asymptotic properties of the solutions to a singularly perturbed moment differential equation. That concept of multisummability as long as previous results achieved by the authors in~\cite{LMS2} have been the key points used to describe generalized multisummability of the formal solution of the main equation (\ref{eq:main:intro}). As mentioned above, novel integral representations and accurate estimates of the moment derivatives of generalized sums of formal power series are needed, arriving to the coherent definition of moment derivation of the generalized multisum of a formal power series. More precisely, the first main result of the present study, Theorem~\ref{teo2}, resorts to an appropriate deformation path which is split in order to provide upper bounds for the moment derivative of a sectorial holomorphic function, quite related to the multisummability process. As a consequence, Corollary~\ref{coro519} and Definition~\ref{defi624} encompass the notion of moment derivation of the multisum of a formal power series along some multidirection. As an application of these results, Section~\ref{secfinal} characterizes multisummability of the formal solution to (\ref{eq:main:intro}) in terms of that of the initial data and the forcing term (Theorem~\ref{teopral}). The proof of this last result is based on the properties of formal Borel transform which transform the problem into an auxiliary moment partial differential equation, which is easier to handle.

The paper is structured as follows. We fix notation in Section~\ref{secnotacion}, followed by Section~\ref{secprel}, where we recall the definition and main results on strongly regular sequences, generalized summability and multisummability of formal power series. In that section, some technical results needed in the sequel are also proved. The main purpose of Section~\ref{secmomdif} is to state Theorem~\ref{teo2}, leading to a coherent definition of moment derivative of the generalized multisum of a formal power series. Section~\ref{secfinal} describes an application (Theorem~\ref{teopral}) of the previous results in the framework of singularly perturbed moment differential equations.

\section{Notation}\label{secnotacion}
By $\N$ we shall denote the set of all positive integers, i.e., $\{1,2,\dots\}$ and $\N_0=\N\cup\{0\}$.

$\Rr$ stands for the Riemann surface of the logarithm.

For all $r>0$ and $z_0\in\C$, $D(z_0,r)$ stands for the open disc in the complex plane centered at $z_0$ and with radius $r$. For any fixed $\theta>0$ and $d\in\R$ a subset of $\Rr$ defined as
$$
S_d(\theta)=\left\{z\in\Rr:\ |\arg z-d|<\frac{\theta}{2}\right\}
$$
is an open infinite sector with vertex at the origin, bisecting direction $d$ and opening $\theta$. In cases where the opening is not specified, we simply write $S_d$. For every $r>0$, we write $S_{d}(\theta;r):=S_{d}(\theta)\cap D(0,r)$. A sectorial region $G_d(\theta)$ is a subset of $\Rr$ such that there exists $r>0$ for which $G_d(\theta)\subset S_d(\theta;r)$ and for any $0<\theta'<\theta$ there exists $0<r'<r$ such that $S_d(\theta';r')\subset G_d(\theta)$.

We put $\hat{S}_{d}(\theta;r):=S_{d}(\theta)\cup D(0,r)$. Analogously, we write $\hat{S}_{d}(\theta)$ (resp. $\hat{S}_{d}$) whenever the radius $r>0$ (resp. the radius and the opening $r,\theta>0$) can be omitted. We write $S\prec S_d(\theta)$ if $S$ is an infinite sector with vertex at the origin such that $\bar{S}\subset S_d(\theta)$, where the closure is considered with respect to $\mathcal{R}$. Similarly, for two sectorial regions $G_d(\theta)$ and $G_{d'}(\theta')$ we write $G_d(\theta)\prec G_{d'}(\theta')$ whenever $G_d(\theta)\subset G_{d'}(\theta')$ and relation $\prec$ holds for the sectors appearing in the definitions of both sectorial regions.

If $(\EE,\|\cdot\|_\EE)$ is a complex Banach space, by $\Oo(U,\EE)$ we denote the set of all functions holomorphic on the open set $U\subset \C$ with values from $\EE$. For $\EE=\C$ we simply write $\Oo(U)$. The set of all formal power series in $t$ with coefficients in $\EE$ is denoted by $\EE[[z]]$.

\section{Preliminary results and definitions}\label{secprel}

The purpose of this section is to recall the main facts on the theory of generalized summability and also one of the equivalent notions of generalized multisummability of a formal power series along certain multidirection, developed in~\cite{jkls}. We first remind the main elements regarding the theory of strongly regular sequences and related properties together with the theory of generalized summability, for the sake of completeness. These definitions and the detailed constructions can be found in~\cite{sanzproceedings} and the references therein.

\subsection{Strongly regular sequences}

The concept of strongly regular sequences was put forward by V.~Thilliez in~\cite{thilliez}.

\begin{defin}
Let $\mathbb{M}=(M_p)_{p\ge0}$ be a sequence of positive real numbers with $M_0=1$. $\mathbb{M}$ is a \emph{strongly regular sequence} if the following statements hold: 
\begin{itemize}
\item[(lc)] $M_p^2\le M_{p-1}M_{p+1}$, $p\ge1$ ($\mathbb{M}$ is \emph{logarithmically convex}).
\item[(mg)] there exists $A_1>0$ such that $M_{p+q}\le A_1^{p+q}M_pM_q$, for all $p,q\ge0$ ($\mathbb{M}$ is of \emph{moderate growth}).
\item[(snq)] there exists $A_2>0$ such that $\sum_{q\ge p}\frac{M_q}{(q+1)M_{q+1}}\le A_2\frac{M_p}{M_{p+1}}$, for all $p\ge0$ ($\mathbb{M}$ satisfies the \emph{strong non-quasianalyticity condition}). 
\end{itemize}
\end{defin}

The previous notion generalizes that of Gevrey sequences of order $\alpha>0$, $(p!^{\alpha})_{p\ge0}$, which widely appear in the theory of summability of formal solutions to functional equations. In association with a strongly regular sequence $\mathbb{M}$, one can define the function 
\begin{equation}\label{eq:function_M}
 M(t):=\left\{
 \begin{aligned}
 \sup_{p\ge 0}\log \left(\frac{t^p}{M_p}\right)&\quad\textrm{ for }t>0\\
 \\0\qquad\quad&\quad \textrm{ for }t=0
 \end{aligned}
 \right.
\end{equation}
It turns out that $M$ is a non-decreasing and continuous function in $[0,\infty)$, with $\lim_{t\to\infty}M(t)=+\infty$. We also consider the positive real number
$$\omega(\mathbb{M}):=\left(\lim\sup_{r\to\infty}\max\left\{0,\frac{\log(M(r))}{\log(r)}\right\}\right)^{-1},$$
which determines the limit opening for a sector to admit nontrivial flat ultraholomorphic functions defined on them. We refer to~\cite{jss} for a deeper study in this direction. 

Following~\cite{sanz,sanzproceedings,thilliez}, one has the next results.

\begin{lemma}[(17),~\cite{sanz}]\label{lema_1}
For every $H>0$, there exist $C,D>0$ such that for all $p\ge0$ one has
$$\int_0^{\infty}t^{p-1}\exp(-M(t/H))dt\le CD^pM_p.$$
\end{lemma}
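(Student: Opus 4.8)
The plan is to reduce to the case $H=1$ and then read everything off from the definition \eqref{eq:function_M} of $M$. The change of variables $t=Hs$ gives
\[
\int_{0}^{\infty}t^{p-1}\exp(-M(t/H))\,dt=H^{p}\int_{0}^{\infty}s^{p-1}\exp(-M(s))\,ds,
\]
so it suffices to produce $C_{0},D_{0}>0$ with $\int_{0}^{\infty}s^{p-1}\exp(-M(s))\,ds\le C_{0}D_{0}^{p}M_{p}$ for every $p\ge1$; the statement then holds with $C=C_{0}$ and $D=HD_{0}$. (For $p=0$ the integrand behaves like $1/s$ near the origin, where $M$ vanishes, so I read the estimate, as it is used in the sequel, for $p\ge1$.)

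From \eqref{eq:function_M} one has $\exp(M(s))\ge s^{q}/M_{q}$, hence the pointwise bound $\exp(-M(s))\le M_{q}s^{-q}$, for every $q\ge0$ and every $s>0$. I would then split $\int_{0}^{\infty}=\int_{0}^{1}+\int_{1}^{\infty}$: on $(0,1)$ take $q=0$, so that $\exp(-M(s))\le M_{0}=1$ and $\int_{0}^{1}s^{p-1}\exp(-M(s))\,ds\le 1/p\le1$; on $(1,\infty)$ take $q=p+1$, which gives $\int_{1}^{\infty}s^{p-1}\exp(-M(s))\,ds\le M_{p+1}\int_{1}^{\infty}s^{-2}\,ds=M_{p+1}$. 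Altogether $\int_{0}^{\infty}s^{p-1}\exp(-M(s))\,ds\le 1+M_{p+1}$.

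It remains to absorb $1+M_{p+1}$ into $C_{0}D_{0}^{p}M_{p}$. Logarithmic convexity (lc) together with $M_{0}=1$ yields $M_{p}\ge M_{1}^{p}$, whence $1\le (M_{1}^{-1})^{p}M_{p}$; moderate growth (mg), applied to the pair $(p,1)$, yields $M_{p+1}\le A_{1}^{p+1}M_{p}M_{1}=(A_{1}M_{1})A_{1}^{p}M_{p}$. Taking $C_{0}=1+A_{1}M_{1}$ and $D_{0}=\max\{1,M_{1}^{-1},A_{1}\}$ finishes the proof. The only nontrivial input is this last step: (lc) bounds $M_{p}$ below by a geometric sequence and (mg) keeps the ratio $M_{p+1}/M_{p}$ geometrically bounded, which is exactly the room that a bound of the form $CD^{p}M_{p}$ leaves; everything else is elementary, and the strong non-quasianalyticity condition (snq) is not needed here.
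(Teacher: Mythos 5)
Your proof is correct, and it is worth noting that the paper itself offers no proof here: Lemma~\ref{lema_1} is simply quoted from equation (17) of~\cite{sanz}, so there is nothing internal to compare against. Your argument is the standard self-contained one: the rescaling $t=Hs$, the pointwise bound $\exp(-M(s))\le M_q s^{-q}$ read off directly from the supremum defining $M$ in \eqref{eq:function_M}, the split of the integral at $s=1$ with $q=0$ and $q=p+1$ respectively, and the absorption of $1+M_{p+1}$ into $C_0D_0^pM_p$ via $M_p\ge M_1^p$ (from (lc) with $M_0=1$, or from $M_pM_q\le M_{p+q}$ in Lemma~\ref{lema2}) and the moderate growth bound $M_{p+1}\le A_1^{p+1}M_1M_p$. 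All steps check out, and you are right that (snq) plays no role.

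Your parenthetical remark about $p=0$ is not a cosmetic caveat but a genuine correction of the statement as printed: since $M_p^{1/p}$ is nondecreasing, one has $M(t)=0$ for $0\le t\le M_1$, so $\exp(-M(t/H))\equiv 1$ near the origin and $\int_0^{\delta}t^{-1}\,dt$ diverges; the inequality therefore cannot hold for $p=0$ and must be read for $p\ge1$ (equivalently, with $t^{p}$ in place of $t^{p-1}$ and $M_{p+1}$ on the right, which is closer to the formulation in~\cite{sanz}). That is indeed how the lemma is used later in the paper, where the integrand always carries the extra decay $|e(t)|\lesssim t^{\alpha}$ near the origin. So: correct proof, correct reading of the statement, and an appropriate flag of a small imprecision in the lemma as quoted.
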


\begin{lemma}\label{lema0}
Let $\mathbb{M}$ be a strongly regular sequence, and let $s\ge1$. There exists $\rho(s)\ge 1$ (only depending on $\mathbb{M}$ and $s$) such that 
$$\exp(-M(t))\le \exp(-sM(t/\rho(s))),\qquad t\ge 0.$$ 
\end{lemma}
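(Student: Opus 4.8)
The plan is to reduce the claimed estimate to an iterated ``doubling'' inequality for the function $M$. Exponentiating, the assertion is equivalent to $M(t)\ge sM(t/\rho(s))$ for all $t\ge0$, so it suffices to produce some $\rho(s)\ge1$, depending only on $\mathbb{M}$ and $s$, with this property.

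\emph{First step: the basic inequality.} I would establish
$$2M(t)\le M(A_1t),\qquad t\ge0,$$
where $A_1$ is the constant from the moderate growth condition (mg). Since the two suprema are taken over independent indices,
$$2M(t)=\sup_{p\ge0}\log\frac{t^p}{M_p}+\sup_{q\ge0}\log\frac{t^q}{M_q}=\sup_{p,q\ge0}\log\frac{t^{p+q}}{M_pM_q},$$
and (mg) gives $M_pM_q\ge A_1^{-(p+q)}M_{p+q}$, whence $t^{p+q}/(M_pM_q)\le (A_1t)^{p+q}/M_{p+q}$. Letting $p+q$ run over all of $\N_0$, one obtains $2M(t)\le\sup_{n\ge0}\log((A_1t)^n/M_n)=M(A_1t)$. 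Observe also that (mg) with $p=1$, $q=0$ forces $A_1\ge1$, so replacing $t$ by $t/A_1$ is harmless and keeps $\rho\ge1$.

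\emph{Second step: iteration.} Replacing $t$ by $t/A_1$ yields $M(t/A_1)\le\tfrac12 M(t)$, and iterating $n$ times gives
$$M(t/A_1^n)\le 2^{-n}M(t),\qquad t\ge0,\ n\ge0.$$
\emph{Third step: conclusion.} Given $s\ge1$, choose $n=\lceil\log_2 s\rceil$ and set $\rho(s):=A_1^n\ge1$, which depends only on $\mathbb{M}$ (through $A_1$) and on $s$. Since $s\le 2^n$ and $M\ge0$, the previous line gives
$$sM(t/\rho(s))\le 2^nM(t/A_1^n)\le M(t),\qquad t\ge0,$$
and applying the decreasing map $x\mapsto e^{-x}$ to both sides completes the proof.

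I do not expect a genuine obstacle here. The only point requiring a little care is the manipulation of the suprema in the first step: one must check that the index $p+q$ actually sweeps out all of $\N_0$, and that $M(t)$ is finite for every $t\ge0$ (which holds for strongly regular sequences, since $M_p^{1/p}\to\infty$), together with the elementary verification that $A_1\ge1$ so that $\rho(s)\ge1$ as required by the statement.
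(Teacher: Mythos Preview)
Your proof is correct. The paper does not actually prove this lemma; it merely quotes it from the references \cite{sanz,sanzproceedings,thilliez} (see the sentence ``Following~\cite{sanz,sanzproceedings,thilliez}, one has the next results''), so there is no proof in the paper to compare against. Your argument is the standard one: the moderate growth condition (mg) yields the doubling inequality $2M(t)\le M(A_1t)$, and iteration plus the elementary observation $A_1\ge1$ gives the claim. The only cosmetic remark is that finiteness of $M(t)$ (needed so that the suprema manipulations are meaningful) is automatic here since $M_0=1$ forces $M(t)\ge0$ and (snq) guarantees $M_p^{1/p}\to\infty$; you already flagged this.
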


\begin{lemma}\label{lema1}
Let $\mathbb{M}=(M_p)_{p\ge0}$ be a strongly regular sequence. The sequence $\mathbb{M}^s:=(M_p^{s})_{p\ge0}$ defines a strongly regular sequence for every $s>0$. Moreover, $\omega(\mathbb{M}^s)=s\omega(\mathbb{M})$.
\end{lemma}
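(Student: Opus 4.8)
The plan is to verify the three defining conditions (lc), (mg), (snq) for $\mathbb{M}^s=(M_p^s)_{p\ge0}$ one by one, and then to establish the identity $\omega(\mathbb{M}^s)=s\,\omega(\mathbb{M})$ by computing the function associated with $\mathbb{M}^s$ via~\eqref{eq:function_M}. Conditions (lc) and (mg) are immediate: since $x\mapsto x^s$ is increasing on $(0,\infty)$ for $s>0$, applying it to $M_p^2\le M_{p-1}M_{p+1}$ gives $(M_p^s)^2\le M_{p-1}^sM_{p+1}^s$ (and $M_0^s=1$), so (lc) holds for $\mathbb{M}^s$; and raising $M_{p+q}\le A_1^{p+q}M_pM_q$ to the power $s$ yields $M_{p+q}^s\le (A_1^s)^{p+q}M_p^sM_q^s$, so (mg) holds with constant $A_1^s$.

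The condition (snq) is the only delicate point, and I expect it to be the main obstacle. Put $m_q:=M_q/M_{q+1}$; by (lc) the quotients $M_p/M_{p-1}$ are non-decreasing, hence $(m_q)_{q\ge0}$ is non-increasing. If $s\ge1$, then for every $q\ge p$ one has $m_q^s=m_q^{s-1}m_q\le m_p^{s-1}m_q$, whence
\[
\sum_{q\ge p}\frac{m_q^s}{q+1}\le m_p^{s-1}\sum_{q\ge p}\frac{m_q}{q+1}\le A_2\,m_p^{s},
\]
so (snq) holds for $\mathbb{M}^s$ with the same constant $A_2$. For $0<s<1$ this monotonicity argument breaks down — passing to a power less than $1$ makes the small tail terms decay more slowly relative to the leading scale $m_p^s$ — and one must instead use the quantitative strengthening of (snq) available for log-convex sequences: (snq) is equivalent to the existence of $\delta>0$ and $C\ge1$ such that $m_q/m_p\le C\big((p+1)/(q+1)\big)^{\delta}$ for all $q\ge p$ (equivalently, positivity of Thilliez's index $\gamma(\mathbb{M})$; see~\cite{thilliez}). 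Raising this inequality to the power $s$ gives $m_q^s/m_p^s\le C^s\big((p+1)/(q+1)\big)^{s\delta}$ with $s\delta>0$, and summing against $1/(q+1)$ (comparing with $\int_p^\infty(x+1)^{-1-s\delta}\,dx$) yields $\sum_{q\ge p}m_q^s/(q+1)\le (C^s/(s\delta))\,m_p^s$; hence (snq) holds for $\mathbb{M}^s$. The care needed here is in correctly invoking this power-type reformulation of (snq) and checking that it transfers under $\mathbb{M}\mapsto\mathbb{M}^s$.

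Finally, for the equality $\omega(\mathbb{M}^s)=s\,\omega(\mathbb{M})$, let $M_s$ denote the function associated with $\mathbb{M}^s$ by~\eqref{eq:function_M}. For $t>0$,
\[
M_s(t)=\sup_{p\ge0}\big(p\log t-s\log M_p\big)=s\sup_{p\ge0}\big(p\log(t^{1/s})-\log M_p\big)=s\,M\big(t^{1/s}\big).
\]
Substituting $r=u^s$ gives $\dfrac{\log M_s(r)}{\log r}=\dfrac{\log s+\log M(u)}{s\log u}$, and since $M(u)\to\infty$ as $u\to\infty$ the bounded term $\log s$ is negligible in the $\limsup$; therefore
\[
\limsup_{r\to\infty}\max\Big\{0,\tfrac{\log M_s(r)}{\log r}\Big\}=\frac1s\,\limsup_{u\to\infty}\max\Big\{0,\tfrac{\log M(u)}{\log u}\Big\}.
\]
Taking reciprocals yields $\omega(\mathbb{M}^s)=s\,\omega(\mathbb{M})$, completing the proof.
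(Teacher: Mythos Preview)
The paper does not supply its own proof of this lemma; it is stated without argument as a known fact drawn from the references~\cite{sanz,sanzproceedings,thilliez}. Your proof is correct and self-contained: (lc) and (mg) transfer immediately under the monotone map $x\mapsto x^s$; your handling of (snq) --- directly via the monotonicity of $m_q=M_q/M_{q+1}$ when $s\ge1$, and via Thilliez's equivalent power-type characterization $m_q/m_p\le C\big((p+1)/(q+1)\big)^{\delta}$ (positivity of $\gamma(\mathbb{M})$) when $0<s<1$ --- is the standard route and is sound; and the identity $M_s(t)=s\,M(t^{1/s})$ yields $\omega(\mathbb{M}^s)=s\,\omega(\mathbb{M})$ exactly as you compute. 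In short, you have filled in precisely the details the paper delegates to the literature.
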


\begin{lemma}\label{lema2}
Let $\mathbb{M}$ be a sequence satisfying (lc) property. Then 
\begin{itemize}
\item $(M_p^{1/p})_{p\ge0}$ is nondecreasing. 
\item $M_pM_q\le M_{p+q}$ for all $p,q\in\N_0$.
\end{itemize}
\end{lemma}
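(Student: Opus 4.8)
\textbf{Proof plan for Lemma~\ref{lema2}.}

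Both assertions are classical consequences of logarithmic convexity, and the plan is to reduce everything to the monotonicity of the quotients $q_p := M_p/M_{p-1}$. First I would observe that the (lc) condition $M_p^2 \le M_{p-1}M_{p+1}$ for $p \ge 1$ is exactly the statement that the sequence $(q_p)_{p\ge 1}$ is nondecreasing; together with $M_0 = 1$ this gives $M_p = \prod_{j=1}^{p} q_j$.

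For the first bullet, the goal is to show $M_p^{1/p} \le M_{p+1}^{1/(p+1)}$, equivalently $M_p^{p+1} \le M_{p+1}^{p}$. Writing both sides in terms of the quotients, $M_p^{p+1} = \prod_{j=1}^{p} q_j^{\,p+1}$ and $M_{p+1}^{p} = \prod_{j=1}^{p+1} q_j^{\,p}$, so after cancelling one would need $\prod_{j=1}^{p} q_j \le q_{p+1}^{p}$, which holds term by term because $q_j \le q_{p+1}$ for all $j \le p+1$ by monotonicity of $(q_p)$. (For $p=0$ the inequality $M_0^{1/0}$ is vacuous, or one interprets $M_0^{1/p}$ in the limit; the statement is really about $p \ge 1$ versus $p \ge 2$, with $M_1^{1/1}=M_1$ as the base case, and the case $M_0 = 1 \le M_1$ needs $q_1 \ge 1$, which follows from (lc) only if one additionally knows $M_1 \ge 1$ — so I would note that the intended reading is monotonicity of $(M_p^{1/p})_{p\ge 1}$, and that many authors normalize so that $M_1 \ge M_0$.) Alternatively, and perhaps more cleanly, one can invoke the elementary fact that if $(a_j)$ is nondecreasing then the sequence of geometric means $\big(\prod_{j=1}^p a_j\big)^{1/p}$ is nondecreasing, applied to $a_j = q_j$.

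For the second bullet, $M_p M_q \le M_{p+q}$, I would again use the quotient representation: $M_{p+q}/M_p = \prod_{j=p+1}^{p+q} q_j$ and $M_q = \prod_{j=1}^{q} q_j$, so the claim $M_q \le M_{p+q}/M_p$ reduces to $\prod_{j=1}^{q} q_j \le \prod_{j=p+1}^{p+q} q_j$, which is immediate term by term from $q_j \le q_{j+p}$, i.e. from the monotonicity of $(q_p)$. The case $p = 0$ or $q = 0$ is trivial since $M_0 = 1$.

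The main (and really the only) subtlety is the degenerate indices in the first bullet, namely what $M_p^{1/p}$ means at $p=0$ and whether the base step $M_0 \le M_1$ is granted; everything else is a one-line manipulation of telescoping products. I expect the author's proof to handle this by simply stating the result for $p \ge 1$, or by noting that (lc) plus the normalization already in force makes $(q_p)_{p\ge1}$ nondecreasing and bounded below by $q_1$, so no genuine obstacle arises.
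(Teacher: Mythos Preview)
Your argument is correct and is the standard one: both statements reduce immediately to the monotonicity of the quotients $q_p=M_p/M_{p-1}$, which is a rephrasing of (lc), and then telescoping products (or the geometric-mean lemma) finish the job. The paper does not actually supply a proof of this lemma; it is listed among the preliminary facts preceded by ``Following~\cite{sanz,sanzproceedings,thilliez}, one has the next results'', so there is no alternative argument to compare with. Your observation about the index $p=0$ in the first bullet is accurate---$M_0^{1/0}$ is not meaningful and (lc) alone does not force $M_1\ge 1$---and the statement should indeed be read for $p\ge 1$; this is a wording issue in the lemma rather than a gap in your reasoning.
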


\subsection{Generalized summability}

In this subsection, $(\mathbb{E},\left\|\cdot\right\|_{\mathbb{E}})$ stands for a complex Banach space.

The classical summability theory of formal power series related to Gevrey sequences (see for example~\cite{balser,loday}) has recently been adapted to the more general settings involving strongly regular sequences (see~\cite{sanz,sanzproceedings}). This notion leans on the approximation of holomorphic functions in sectors of the complex plane by formal power series whenever the approximation is given in terms of a given strongly regular sequence.

\begin{defin}
Let $\mathbb{M}=(M_p)_{p\ge0}$ be a sequence of positive real numbers, and let $G_{d}(\theta)\subseteq\mathcal{R}$ be a sectorial region, for some $\theta>0$ and $d\in\R$. A function $f\in\mathcal{O}(G_d(\theta),\mathbb{E})$ admits the formal power series $\hat{f}(z)=\sum_{p\ge0}f_pz^p\in\mathbb{E}[[z]]$ as its \emph{$\mathbb{M}$-asymptotic expansion in $G_d(\theta)$} if for every $0<\theta'<\theta$ and $r>0$ with $S_d(\theta';r)\subseteq G_{d}(\theta)$ and all integer $N\ge1$, there exist $C,A>0$ with
$$\left\|f(z)-\sum_{p=0}^{N-1}f_pz^p\right\|_{\mathbb{E}}\le CA^{N}M_N|z|^{N},$$
for all $z\in S_{d}(\theta';r)$.
\end{defin}

Further details on the following result can be found in Section 3~\cite{sanzproceedings}.

\begin{lemma}\label{lema140}
In the situation of the previous definition, there exist $\tilde{C},\tilde{A}>0$ such that 
$$\left\|f_p\right\|_{\mathbb{E}}\le \tilde{C}\tilde{A}^pM_p$$
for every $p\ge0$.
\end{lemma}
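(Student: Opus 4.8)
The plan is to run the standard Cauchy‑type argument that converts the remainder estimates defining an $\mathbb{M}$-asymptotic expansion into growth bounds for the coefficients. First I would fix, using the definition of a sectorial region, reals $0<\theta'<\theta$ and $r>0$ with $S_d(\theta';r)\subseteq G_d(\theta)$, and apply the hypothesis on this fixed (non-empty) bounded subsector to obtain constants $C,A>0$, \emph{independent of} $N$, such that
$$\left\|f(z)-\sum_{p=0}^{N-1}f_pz^p\right\|_{\mathbb{E}}\le CA^NM_N|z|^N\qquad\text{for all }N\ge1\ \text{and all }z\in S_d(\theta';r).$$

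Next I would isolate a single coefficient by a telescoping step: from
$$f_Nz^N=\left(f(z)-\sum_{p=0}^{N-1}f_pz^p\right)-\left(f(z)-\sum_{p=0}^{N}f_pz^p\right)$$
and the triangle inequality, applying the previous estimate with the indices $N$ and $N+1$ and then dividing by $|z|^N$, one gets
$$\|f_N\|_{\mathbb{E}}\le CA^NM_N+CA^{N+1}M_{N+1}|z|\qquad\text{for every }N\ge1\ \text{and every }z\in S_d(\theta';r).$$
Since the left-hand side is independent of $z$ while the subsector contains points of arbitrarily small modulus, letting $|z|\to0^+$ along a ray in $S_d(\theta';r)$ yields $\|f_N\|_{\mathbb{E}}\le CA^NM_N$ for all $N\ge1$. (If $\mathbb{M}$ is strongly regular one may instead keep $|z|=\rho$ fixed and small and absorb the second term using the moderate growth bound $M_{N+1}\le A_1^{N+1}M_1M_N$, at the cost of replacing $A$ by $AA_1$.) For $N=0$ nothing is needed beyond enlarging the constant, since $f_0$ is a fixed element of $\mathbb{E}$; hence $\tilde C:=\max\{\|f_0\|_{\mathbb{E}}/M_0,\,C\}$ and $\tilde A:=\max\{1,A\}$ give $\|f_p\|_{\mathbb{E}}\le\tilde C\tilde A^pM_p$ for every $p\ge0$.

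I do not anticipate any real obstacle: once the bounded subsector is fixed, the rest is an elementary manipulation. The only point that must be handled with care is that the constants $C$ and $A$ in the definition of the $\mathbb{M}$-asymptotic expansion are taken uniform in $N$ on each fixed subsector; this uniformity is precisely what is being transferred into the Gevrey-type growth $\tilde A^pM_p$ of the coefficients, and the statement would fail without it.
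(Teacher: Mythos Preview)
Your argument is correct and is the standard way to derive coefficient bounds from uniform remainder estimates; the telescoping identity together with $|z|\to0^+$ (or, alternatively, moderate growth of $\mathbb{M}$) is exactly the classical device. The paper itself does not supply a proof of this lemma but simply refers the reader to Section~3 of~\cite{sanzproceedings}, so there is no in-text argument to compare against; your proof is what one would find in such a reference.
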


The exponential growth in sectors of the complex plane is extended in terms of the function $M(\cdot)$ as follows. 

\begin{defin}\label{defi3}
 Let $\theta>0$ and $d\in \R$, and suppose that $\M$ is a fixed sequence of positive real numbers. We define the set $\Oo^{\M}(S_d(\theta),\EE)$ as consisting of all functions $f\in\Oo(S_d(\theta),\EE)$ such that for every $0<\theta'<\theta$ there exist constants $C,K>0$ satisfying
 \begin{equation}\label{e152}
  \|f(z)\|_{\EE}\le C\exp\left(M\left(\frac{|z|}{K}\right)\right) \textrm{ for every }z\in S_d(\theta').
 \end{equation}
\end{defin}


The construction of operators involved in the summability process leans on the existence of kernel functions for generalized summability, related to a given strongly regular sequence.

\begin{defin}\label{defin:kernel-functions}
 Let $\M$ be a strongly regular sequence with $\omega(\M)<2$ and with function $M(\cdot)$ defined as in \eqref{eq:function_M}. Two complex functions $e, E$ are \emph{strong kernel functions for $\M$-summability} if the following properties hold:
\begin{itemize}
\item $e\in\Oo(S_0(\omega(\M)\pi))$. There exists $\alpha>0$ such that for all bounded proper subsectors $T$ of $S_{0}(\omega(\mathbb{M}\pi))$, there exists $C>0$ with
\begin{equation}\label{eq:est_e}
|e(z)|\le C|z|^{\alpha},\qquad z\in T.
 \end{equation}

Furthermore, $e$ is a flat function in every infinite subsector of $S_0(\omega(\mathbb{M}))$. More precisely, for every $\varepsilon>0$ there exist $C, K>0$ such that
\begin{equation}\label{e162}
|e(z)|\le C\exp\left(-M\left(\frac{|z|}{K}\right)\right)\quad \hbox{ for every }z\in S_0(\omega(\M)\pi-\varepsilon).
\end{equation}
We also assume that $e(x)\in\R$ for every real $x>0$.
\item $E\in\Oo(\C)$ with generalized exponential growth at infinity
\begin{equation}\label{kernel2}
|E(z)|\le \tilde{c}\exp\left(M\left(\frac{|z|}{\tilde{k}}\right)\right)\quad\textrm{for every } z\in \C,
\end{equation}
for some $\tilde{c},\tilde{k}>0$. There also exists $\beta>0$ such that for all $0<\tilde{\theta}<2\pi-\omega(\M)\pi$ and $M_E>0$, there exists $\tilde{c}_2>0$ with 
\begin{equation}\label{e202}
|E(z)|\le \frac{\tilde{c}_2}{|z|^{\beta}},\quad z\in S_\pi(\tilde{\theta})\setminus D(0,M_E).
\end{equation}
\item Functions $e$ and $E$ are connected by the \textit{moment function associated with $e$} defined by
\begin{equation}\label{kernel3}
m_{e}(z):=\int_{0}^{\infty} t^{z-1}e(t)dt.
\end{equation}
$m_e$ is a holomorphic function on $\{z\in\C:\hbox{Re}(z)> 0\}$, continuous up to its boundary. Indeed, $E$ is determined from $e$ via the sequence of moments associated with $e$, $(m_e(p))_{p\ge0}$, by
\begin{equation}\label{kernel4}
E(z)=\sum_{p\ge0}\frac{z^p}{m_{e}(p)},\quad z\in\C.
\end{equation}
\end{itemize}
\end{defin}

\begin{lemma}[Proposition 5.7,~\cite{sanz}]\label{lema_2}
Given a kernel function $e$ for $\mathbb{M}$-summability, associated with some strongly regular sequence $\mathbb{M}$, the sequence of moments $(m_e(p))_{p\ge0}$ and $\mathbb{M}$ are equivalent, i.e., there exist $C,D,\tilde{C},\tilde{D}>0$ such that
$$C D^pm_e(p)\le M_p\le \tilde{C}\tilde{D}^pm_e(p),\qquad p\ge0.$$
\end{lemma}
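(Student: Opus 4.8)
The statement to be proved is Lemma~\ref{lema_2}, which asserts the equivalence of the moment sequence $(m_e(p))_{p\ge0}$ associated with a kernel function $e$ and the underlying strongly regular sequence $\mathbb{M}$.

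\medskip

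The plan is to establish the two inequalities $CD^p m_e(p)\le M_p$ and $M_p\le \tilde C\tilde D^p m_e(p)$ separately, each by comparing the integral defining $m_e(p)=\int_0^\infty t^{p-1}e(t)\,dt$ against integrals whose growth in $p$ is controlled by $M_p$.

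\medskip

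First I would prove the upper bound $m_e(p)\le \tilde C\tilde D^p M_p$. For this I use the flatness estimate \eqref{e162} for $e$: restricting to the positive real axis (which lies well inside $S_0(\omega(\mathbb{M})\pi-\varepsilon)$ for small $\varepsilon$), there are constants $C,K>0$ with $|e(t)|\le C\exp(-M(t/K))$ for $t>0$. Hence
\[
m_e(p)=\left|\int_0^\infty t^{p-1}e(t)\,dt\right|\le C\int_0^\infty t^{p-1}\exp(-M(t/K))\,dt,
\]
and Lemma~\ref{lema_1}, applied with $H=K$, yields constants $C',D'>0$ with the right-hand side bounded by $C'(D')^p M_p$. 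This gives one half of the claim with $\tilde C=C C'$, $\tilde D=D'$.

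\medskip

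For the reverse bound $CD^p m_e(p)\le M_p$, the idea is to bound $M_p$ from above in terms of the integral. By the very definition \eqref{eq:function_M} of $M(\cdot)$, for every $t>0$ and every $p\ge0$ we have $\log(t^p/M_p)\le M(t)$, i.e. $t^p\le M_p\,e^{M(t)}$, which goes the wrong way; instead I would use the dual/complementary relation. The cleaner route: since $\mathbb{M}$ is (lc), one has the classical identity/estimate expressing $M_p$ (up to a geometric factor depending only on $\mathbb{M}$) in terms of $\inf_{t>0} e^{M(t)}/t^p$ or equivalently in terms of a Watson-type lower bound for $\int_0^\infty t^{p-1}e(t)\,dt$. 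Concretely, one uses estimate \eqref{eq:est_e}, $|e(z)|\le C|z|^{\alpha}$ on bounded subsectors, together with the fact that $e$ is not identically zero and $e(x)\in\R$ for $x>0$, to produce a lower bound of the form $\int_0^{R}t^{p-1}e(t)\,dt\ge c\,R^{p+\alpha}/(p+\alpha)$ after choosing the sign appropriately and a suitable $R$; optimizing $R$ (roughly $R\sim$ the point where $M(t)\approx p\log t$, i.e. $R\sim$ a geometric multiple of $M_{p+1}/M_p$, using that $(M_p^{1/p})$ is nondecreasing by Lemma~\ref{lema2}) produces a lower bound $m_e(p)\ge c\,\tilde c^{p}M_p$ up to controllable geometric factors, using moderate growth (mg) to absorb shifts between $M_p$, $M_{p+1}$ and $M_{p+\alpha}$-type quantities. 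This is essentially the content of Proposition 5.7 of \cite{sanz}, so I would simply cite the construction there for the delicate optimization.

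\medskip

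I expect the main obstacle to be the lower bound: showing $m_e(p)$ is not merely positive but bounded below by a geometric multiple of $M_p$. This requires genuinely using that $e$ has the prescribed polynomial decay at $0$ (via \eqref{eq:est_e}) and the correct order of flat decay at infinity (via \eqref{e162}), so that the mass of the integral $\int_0^\infty t^{p-1}e(t)\,dt$ is concentrated near $t\sim M_{p+1}/M_p$ and is comparable to $M_p$; the sign issue (ensuring no cancellation destroys the lower bound) and the passage between $M_p$ and neighbouring terms of the sequence, handled by (mg) and (lc), are the technical crux. The upper bound, by contrast, is a direct consequence of Lemma~\ref{lema_1} and is essentially immediate.
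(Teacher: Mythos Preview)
The paper does not prove this lemma at all; it is simply stated with a citation to Proposition~5.7 of \cite{sanz}. So there is no ``paper's own proof'' to compare against, and in that sense your proposal goes further than the paper does.

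Your upper bound $m_e(p)\le \tilde C\tilde D^p M_p$ is correct and is indeed an immediate consequence of \eqref{e162} on the positive real axis together with Lemma~\ref{lema_1}.

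Your lower bound argument, however, has a genuine gap. The estimate \eqref{eq:est_e} is an \emph{upper} bound $|e(z)|\le C|z|^{\alpha}$ near the origin; it cannot by itself produce a lower bound of the form $\int_0^R t^{p-1}e(t)\,dt\ge c\,R^{p+\alpha}/(p+\alpha)$, and the phrase ``choosing the sign appropriately'' does not rescue this: knowing only that $e(x)\in\R$ and $|e(x)|\le Cx^{\alpha}$ on $(0,R)$ is compatible with $e$ being very small (or oscillating) there. What is actually needed for the inequality $M_p\le \tilde C\tilde D^p m_e(p)$ is information coming from the \emph{other} kernel: the growth bound \eqref{kernel2} on $E$ together with the Taylor representation \eqref{kernel4} gives, via the definition of $M(\cdot)$, that $1/m_e(p)\le \tilde c\,\tilde k^{-p}/M_p$ for every $p\ge0$, which is exactly the desired bound. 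You implicitly concede the difficulty by deferring to \cite{sanz} at the end; if you want a self-contained argument, replace the attempt via \eqref{eq:est_e} by this use of \eqref{kernel2} and \eqref{kernel4}.
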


As a matter of fact, the classical kernels for summability involved in the Gevrey theory satisfy weaker properties (see~\cite{balser}). These more restrictive conditions are justified (see Section 4.2,~\cite{jkls}) regarding their applicability and adaptability to practical situations. Indeed, given a strongly regular sequence, the existence of a pair of kernels for $\mathbb{M}$-summability is guaranteed whenever $\mathbb{M}$ admits a nonzero proximate order. We refer to Section 2.3~\cite{jkls} for a brief review on sequences admitting a nonzero proximate order. This property will turn into an assumption for every strongly regular sequence under consideration hereinafter.

The classical formal Borel transform can also be adapted to this framework.

\begin{defin}\label{defin:borel-formal}
 Let $(m_e(p))_{p\ge 0}$ be a sequence of moments. Then the \emph{formal $m_e$-Borel moment transform} $\hat{\Bo}_{m_e,z}\colon \EE[[z]]\to\EE[[z]]$ is given by
 $$
 \hat{\Bo}_{m_e,t}\left(\sum_{p\ge 0}u_p z^p\right)=\sum_{p\ge 0}\frac{u_p}{m_e(p)}z^p.
 $$
\end{defin}

An $\mathbb{M}$-analog of Laplace transform is also available (see Section 6,~\cite{sanzproceedings}).

\begin{prop}\label{prop1}
Let $d\in\R$. Given a strongly regular sequence $\mathbb{M}$ which admits a nonzero proximate order, and a pair of kernel functions for $\mathbb{M}$-summability associated, say $e$ and $E$, we define for every $f\in\mathcal{O}^{\mathbb{M}}(S_d,\mathbb{E})$ the $e$-Laplace transform of $f$ along direction $\tau\in\hbox{arg}(S_d)$ by
$$(T_{e,\tau}f)(z)=\int_0^{\infty(\tau)}e(u/z)f(u)\frac{du}{u},$$
for all $|\hbox{arg}(z)-\tau|<\omega(\mathbb{M})\pi/2$, and small enough $|z|$. The variation of $\tau$ among the arguments of $S_d$ determines a holomorphic function, denoted $T_{e,d}f$, defined in a sectorial region of bisecting direction $d$ and opening larger than $\omega(\mathbb{M})\pi$.
\end{prop}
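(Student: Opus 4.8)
The plan is to establish the stated properties of the $e$-Laplace transform $T_{e,\tau}f$ by direct estimation, relying on the flatness and growth properties of the kernel $e$ collected in Definition~\ref{defin:kernel-functions}, together with Lemma~\ref{lema0} and Lemma~\ref{lema_1}. First I would fix $\tau\in\arg(S_d)$ and check that the integral $(T_{e,\tau}f)(z)=\int_0^{\infty(\tau)}e(u/z)f(u)\frac{du}{u}$ converges absolutely and defines a holomorphic function of $z$ on the prescribed region $|\arg(z)-\tau|<\omega(\M)\pi/2$ (with $|z|$ small). For convergence near $u=0$ one uses the estimate \eqref{eq:est_e}, $|e(u/z)|\le C|u/z|^{\alpha}$, against the boundedness of $f$; for convergence as $|u|\to\infty$ along the ray, the crucial point is that $u/z$ stays in a proper subsector of $S_0(\omega(\M)\pi)$ so that the flatness bound \eqref{e162} applies, giving $|e(u/z)|\le C\exp(-M(|u|/(K|z|)))$, while $f\in\Oo^{\M}(S_d,\EE)$ only contributes $\exp(M(|u|/K'))$ by \eqref{e152}. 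Choosing $|z|$ small enough (quantitatively, using Lemma~\ref{lema0} to rescale one of the two $M$-terms so the negative exponential dominates) makes the integrand exponentially decaying; holomorphy in $z$ then follows from uniform convergence and Morera/differentiation under the integral sign.

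Next I would analyze the dependence on the direction $\tau$. The point is that if $\tau,\tau'$ are two admissible arguments close to each other, a contour-rotation (Cauchy's theorem) argument on the sector spanned between the two rays — where the arc-at-infinity contribution vanishes thanks to the exponential decay just established — shows $(T_{e,\tau}f)(z)=(T_{e,\tau'}f)(z)$ on the overlap of their domains of definition. Hence the locally defined functions glue into a single holomorphic function $T_{e,d}f$. To compute the domain, note that as $\tau$ ranges over the open set $\arg(S_d)$, the union of the discs $|\arg z-\tau|<\omega(\M)\pi/2$ is a sectorial region bisected by $d$; if $\arg(S_d)$ has opening $2\gamma$ then the union has opening $2\gamma+\omega(\M)\pi>\omega(\M)\pi$, which is the claimed ``opening larger than $\omega(\M)\pi$''. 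The ``sectorial region'' language (rather than a full sector) accounts for the fact that the admissible $|z|$ shrinks as $\tau$ approaches the boundary of $\arg(S_d)$, exactly matching the definition of $G_d(\theta)$ recalled in Section~\ref{secnotacion}.

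The main obstacle I anticipate is making the smallness condition on $|z|$ and the interplay of the two exponential bounds fully uniform and compatible with the sectorial-region geometry: one must track how the constants $K$ in \eqref{e162} and \eqref{e152}, the rescaling factor $\rho(s)$ from Lemma~\ref{lema0}, and the proximity of $u/z$ to the boundary of $S_0(\omega(\M)\pi)$ all depend on $\arg(z)-\tau$, so that the radius of the sectorial region degenerates in the right way near the edges. Everything else — absolute convergence, holomorphy, the contour rotation — is routine once these estimates are in place, and in fact this statement is essentially a restatement of the construction in Section~6 of~\cite{sanzproceedings}, so I would also point out that the proof may simply be cited from there, indicating only the adaptation to Banach-space-valued $f$ (which is immediate since all estimates are in $\|\cdot\|_{\EE}$).
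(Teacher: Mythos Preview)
Your sketch is correct and in fact goes well beyond what the paper does: Proposition~\ref{prop1} is stated without proof in the paper, with only the parenthetical reference ``(see Section~6,~\cite{sanzproceedings})'' preceding it. So the paper's ``proof'' is simply a citation, exactly the option you mention in your last sentence. Your outline of the argument (convergence near $0$ via \eqref{eq:est_e}, decay at infinity via \eqref{e162} against the growth \eqref{e152} using Lemma~\ref{lema0}, then contour rotation to glue the $T_{e,\tau}$) is the standard one from~\cite{sanz,sanzproceedings} and is accurate; the adaptation to $\mathbb{E}$-valued $f$ is indeed immediate.
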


As a matter of fact, there exists a generalization to the classical Borel-Laplace procedure for the effective summation of a given formal power series. 

\begin{defin}\label{def198}
Let $\M$ be a strongly regular sequence which admits a nonzero proximate order and let $m_e$ denote a sequence of moments associated with $\M$. The series $\hat{u}\in\mathbb{E}[[z]]$ is \emph{$\M$-summable along direction $d\in\R$} if $\hat{\Bo}_{m_e,z}(\hat{u}(z))$ is a series with a positive radius of convergence, and the analytic function defining such series, say $u(z)$, can be extended to an infinite sector of bisecting direction $d$, say $\hat{S}_d$, with $u(z)\in\Oo^{\M}(\hat{S}_d,\mathbb{E})$.
\end{defin}

\begin{prop}\label{prop2}
In the situation of the previous definition, the function $v(z)=(T_{e,d}u)(z)$ is holomorphic on a bounded sector of bisecting direction $d$ and opening larger than $\omega(\mathbb{M})\pi$. 
\end{prop}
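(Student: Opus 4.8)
The plan is to deduce this directly from Proposition~\ref{prop1} applied to $f=u$, once one checks that $u$ meets the hypotheses of that statement. By Definition~\ref{def198}, the series $\hat{\Bo}_{m_e,z}(\hat u)$ has positive radius of convergence and the holomorphic germ $u$ it defines near the origin extends to the infinite sector $\hat S_d=S_d\cup D(0,r)$ with $u\in\Oo^{\M}(\hat S_d,\mathbb{E})$; in particular $u$ is holomorphic on a neighbourhood of $0$ and on the infinite sector $S_d$, and on every proper subsector $S_d(\theta')$ it obeys the bound $\|u(\xi)\|_{\mathbb{E}}\le C\exp\!\left(M\!\left(|\xi|/K\right)\right)$ of Definition~\ref{defi3}. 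Hence $u$ is an admissible input for the $e$-Laplace transform in Proposition~\ref{prop1}.

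Next I would recall why the integral $(T_{e,\tau}u)(z)=\int_0^{\infty(\tau)}e(\xi/z)\,u(\xi)\,\frac{d\xi}{\xi}$ converges for $\tau\in\arg(S_d)$, $|\arg z-\tau|<\omega(\M)\pi/2$ and $|z|$ small: near $\xi=0$ the function $u$ is bounded while $|e(\xi/z)|\le C|\xi/z|^{\alpha}\to0$ by \eqref{eq:est_e}, so there is no difficulty at the origin; for $\xi\to\infty(\tau)$ the flatness estimate \eqref{e162} gives $|e(\xi/z)|\le C\exp\!\left(-M\!\left(|\xi|/(K|z|)\right)\right)$, and after shrinking $|z|$, using Lemma~\ref{lema0} to amplify the decay if necessary, this dominates the growth bound \eqref{e152} for $u$, making the tail integrable against $d\xi/\xi$. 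A contour–rotation (Cauchy) argument shows the functions produced by different rays $\tau$ agree on the overlaps of their domains, so letting $\tau$ run over $\arg(S_d)$ glues them into a single holomorphic function $v=T_{e,d}u$; the union of the sectors $|\arg z-\tau|<\omega(\M)\pi/2$ is a sectorial region $G_d(\theta)$ of bisecting direction $d$ and opening $\theta>\omega(\M)\pi$. This is precisely Proposition~\ref{prop1} with $f=u$.

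Finally, by the definition of a sectorial region, $G_d(\theta)$ contains a bounded sector $S_d(\theta';r')$ for every $\omega(\M)\pi<\theta'<\theta$ and a suitable $r'>0$; restricting $v$ to such a sector gives the assertion. The argument has essentially no obstacle: the only work is verifying the hypotheses of Proposition~\ref{prop1} and the convergence of the defining integral, both routine; the substance of the statement is carried entirely by Proposition~\ref{prop1}, the role of Proposition~\ref{prop2} being to package the output $T_{e,d}u$ as the candidate $\M$-sum of $\hat u$ on a genuine bounded sector.
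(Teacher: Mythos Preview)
Your proposal is correct and matches the paper's approach: in the paper, Proposition~\ref{prop2} is stated without an explicit proof precisely because it is an immediate consequence of Proposition~\ref{prop1} once one notes that the function $u$ from Definition~\ref{def198} belongs to $\Oo^{\M}(S_d,\EE)$, and you have spelled out exactly that verification together with the observation that the resulting sectorial region contains a bounded sector of opening larger than $\omega(\M)\pi$.
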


Definition~\ref{def198} does not depend on the kernel functions for $\mathbb{M}$-summability (and therefore on the moment sequence) considered. In addition to this, the procedure described there provides us with the only function (due to Watson's Lemma, see Corollary 3.16,~\cite{jss}) admitting the initial formal power series as its $\mathbb{M}$-asymptotic expansion in a wide enough sector of bisecting direction $d$, known as the $\mathbb{M}$-sum of the formal power series along direction $d$.

\begin{defin}
The function $v$ in Proposition~\ref{prop2} is known as the \emph{$\mathbb{M}$-sum of $\hat{u}$ along direction $d\in\R$}, and is denoted by $\mathcal{S}_{\mathbb{M},d}(\hat{u})$. The set of formal power series with coefficients in $\mathbb{E}$ which are $\mathbb{M}$-summable along direction $d$ is denoted by $\mathbb{E}\{z\}_{\mathbb{M},d}$. 
\end{defin}

\begin{lemma}\label{lema100}
Let $\hat{f}(z)$ be a formal power series, and let $k\in\N_0$. We define the formal power series $\hat{g}(z):=z^k \hat{f}(z)$. 

Let $\mathbb{M}$ be a strongly regular sequence admitting a nonzero proximate order. Let $d\in\R$. Then, the following statements are equivalent:
\begin{itemize}
\item The formal power series $\hat{f}$ is $\mathbb{M}$-summable in the direction $d$.
\item The formal power series $\hat{g}$ is $\mathbb{M}$-summable in the direction $d$.
\end{itemize}
If one of the previous equivalent statements holds, then 
\begin{equation}\label{e282}
\mathcal{S}_{\mathbb{M},d}(\hat{g})=z^k\mathcal{S}_{\mathbb{M},d}(\hat{f}).
\end{equation}
\end{lemma}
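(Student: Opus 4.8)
\textbf{Proof plan for Lemma~\ref{lema100}.}
The plan is to prove the two equivalences and the identity \eqref{e282} by passing through the Borel plane and using the characterization of $\mathbb{M}$-summability in Definition~\ref{def198}. First I would compute the formal $m_e$-Borel transforms of $\hat f$ and $\hat g$ and relate them explicitly. Writing $\hat f(z)=\sum_{p\ge0}f_pz^p$, we have $\hat g(z)=\sum_{p\ge k}f_{p-k}z^p$, so $\hat\Bo_{m_e,z}(\hat g)(z)=\sum_{p\ge k}\frac{f_{p-k}}{m_e(p)}z^p$, while $\hat\Bo_{m_e,z}(\hat f)(z)=\sum_{p\ge0}\frac{f_p}{m_e(p)}z^p$. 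The key algebraic observation is that these two convergent-or-not series are related by multiplication by $z^k$ together with a reweighting of coefficients by the bounded-from-above-and-below factor $m_e(p-k)/m_e(p)$. Concretely, if $g(z)$ denotes the sum of $\hat\Bo_{m_e,z}(\hat g)$ and $f(z)$ that of $\hat\Bo_{m_e,z}(\hat f)$, then $g(z)=z^k\cdot\sum_{p\ge0}\frac{m_e(p)}{m_e(p+k)}\frac{f_p}{m_e(p)}z^p$. By Lemma~\ref{lema_2} the sequence $(m_e(p))_{p\ge0}$ is equivalent to $\mathbb{M}$, and using property (mg) of $\mathbb{M}$ one gets constants $c_1,c_2>0$ (depending on $k$) with $c_1^p\le m_e(p)/m_e(p+k)\le c_2^p$; hence the reweighting preserves positivity of radius of convergence and, more importantly, preserves membership in $\Oo^{\mathbb{M}}(\hat S_d,\mathbb{E})$ after analytic continuation.

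Next I would handle the analytic continuation and growth estimate. Suppose $\hat f$ is $\mathbb{M}$-summable along $d$: then $f(z)$ extends to $f\in\Oo^{\mathbb{M}}(\hat S_d,\mathbb{E})$. The relation above does not quite say $g=z^k f$ because of the reweighting, so instead I would argue directly: define $h(z):=\sum_{p\ge0}\frac{m_e(p)}{m_e(p+k)}\frac{f_p}{m_e(p)}z^p$ and show that $h$ inherits the analytic continuation and exponential growth of $f$. One clean way is to realize $h$ itself as a Borel transform: observe that the coefficients $\frac{m_e(p)}{m_e(p+k)}\cdot\frac{f_p}{m_e(p)}=\frac{f_p}{m_e(p+k)}$, so $h(z)=\sum_{p\ge0}\frac{f_p}{m_e(p+k)}z^p$, which is the ``shifted'' Borel transform. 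A more economical route avoids introducing $h$ altogether: I would instead prove the equivalence at the level of the sums $v=\mathcal{S}_{\mathbb{M},d}(\hat f)$ using the behaviour of $e$-Laplace transform under multiplication by powers of the variable. The cleanest statement is that $(T_{e,d}u)$ and multiplication by $z^k$ interact via an identity of the form $z^k(T_{e,d}u)(z)=(T_{e,d}(m_e\text{-shifted }u))(z)$, which follows by the change of variable inside the integral $\int_0^{\infty(\tau)}e(u/z)u(u)\frac{du}{u}$ and the definition \eqref{kernel3} of $m_e$; this is exactly the moment-sequence bookkeeping that makes $\partial_{m,z}$ and its inverse behave well under Borel--Laplace.

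For the converse direction the argument is symmetric: if $\hat g=z^k\hat f$ is $\mathbb{M}$-summable along $d$, then $\hat\Bo_{m_e,z}(\hat g)$ has positive radius of convergence and continues to an element of $\Oo^{\mathbb{M}}(\hat S_d,\mathbb{E})$; dividing by $z^k$ (equivalently, inverting the reweighting, which again only costs geometric factors controlled by Lemma~\ref{lema_2} and (mg)) shows $\hat\Bo_{m_e,z}(\hat f)$ does too, so $\hat f$ is $\mathbb{M}$-summable along $d$. Since by the remark following Definition~\ref{def198} the notion of $\mathbb{M}$-summability is independent of the chosen kernel/moment sequence, there is no loss in fixing one pair $(e,E)$ throughout. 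Finally, the identity \eqref{e282}: once both sums exist, $z^k\mathcal{S}_{\mathbb{M},d}(\hat f)$ is holomorphic on a sectorial region of bisecting direction $d$ and opening larger than $\omega(\mathbb{M})\pi$ and admits $z^k\hat f=\hat g$ as its $\mathbb{M}$-asymptotic expansion there (multiplying an asymptotic expansion by the monomial $z^k$ preserves it, trivially from the defining inequality and (mg)); by the uniqueness of the $\mathbb{M}$-sum (Watson's lemma, Corollary 3.16 of~\cite{jss}) it must coincide with $\mathcal{S}_{\mathbb{M},d}(\hat g)$.

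The main obstacle I expect is bookkeeping the moment-sequence factors cleanly: the naive guess ``$\hat\Bo_{m_e,z}(z^k\hat f)=z^k\hat\Bo_{m_e,z}(\hat f)$'' is false, and one must carry the ratios $m_e(p)/m_e(p+k)$ through the argument and verify they neither destroy the radius of convergence nor the $\Oo^{\mathbb{M}}$ growth after continuation; this is where Lemma~\ref{lema_2} (equivalence of $(m_e(p))$ with $\mathbb{M}$) and property (mg) of $\mathbb{M}$ do the real work, and where stating the Laplace-side identity $z^k(T_{e,\tau}u)(z)$ in terms of a shifted integrand is the conceptually right move rather than manipulating coefficients term by term.
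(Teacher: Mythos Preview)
Your argument for the identity \eqref{e282} via Watson's lemma is correct, and your forward implication is fine (it is essentially what the paper does: the set of $\mathbb{M}$-summable series in direction $d$ is an algebra, so multiplication by $z^k$ stays inside). The paper, however, handles the converse much more economically than your Borel-plane route: it reduces to $k=1$, takes the sum $G:=\mathcal{S}_{\mathbb{M},d}(\hat g)$, and simply observes that dividing the asymptotic remainder estimate $\|G(z)-\sum_{p=1}^{n-1}g_pz^p\|\le CA^nM_n|z|^n$ by $|z|$ gives, after one use of (mg), an $\mathbb{M}$-asymptotic expansion of $z^{-1}G(z)$ towards $\hat f$ on the same sectorial region of opening $>\omega(\mathbb{M})\pi$. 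That is already the characterization of $\mathbb{M}$-summability, so no Borel transform is ever needed for the converse.

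There is a genuine gap in your Borel-plane argument for the converse. You assert that replacing the coefficients $f_p/m_e(p)$ by $f_p/m_e(p+k)$ ``preserves membership in $\Oo^{\mathbb{M}}(\hat S_d,\mathbb{E})$ after analytic continuation'' because the multipliers $m_e(p)/m_e(p+k)$ are controlled by Lemma~\ref{lema_2} and (mg). Boundedness of a multiplier sequence does \emph{not}, in general, preserve analytic continuation of a power series beyond its disc of convergence: this is Hadamard multiplication, and without an integral representation for the multiplier one cannot conclude anything about continuation to a sector, let alone growth there. Your plan can be repaired, but not by ``bookkeeping'': what you need is that $g:=\hat\Bo_{m_e}(\hat g)$ is the $k$-fold $m_e$-antiderivative $\partial_{m_e,z}^{-k}f$ of $f:=\hat\Bo_{m_e}(\hat f)$, and then to invoke an integral formula for $\partial_{m_e,z}^{-1}$ in terms of the kernel functions (of the same flavour as the representation used later in the paper for $\partial_{m_e,z}^n$) to transport analytic continuation and $\Oo^{\mathbb{M}}$ growth from $f$ to $g$ and back. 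That is substantially more work than you indicate, and it is entirely avoided by the paper's two-line asymptotic-side argument. If you want to keep a self-contained proof, adopt the paper's strategy for the converse: divide $\mathcal{S}_{\mathbb{M},d}(\hat g)$ by $z$ and use (mg) once.
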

\begin{proof}
It is straightforward that $\hat{f}$ being $\mathbb{M}$-summable in the direction $d$ yields $\hat{g}$ being $\mathbb{M}$-summable in the same direction, as the set of $\mathbb{M}$ summable functions in a direction is an algebra. The second part of the equivalence can be proved following an analogous argument as that for Exercise 3, Section 4.5, in~\cite{balser}. More precisely, an iterative argument allows us to assume that $k=1$. Let $\hat{g}(z)=\sum_{p\ge1}g_pz^p.$ Then, there exists a bounded sector with bisecting direction $d$ and opening larger than $\omega(\mathbb{M})\pi$ such that for any subsector $T$ there exist $C,A>0$ with  
$$\left\|\mathcal{S}_{\mathbb{M},d}(\hat{g})(z)-\sum_{p=1}^{n-1}g_pz^p\right\|\le C A^nM_n|z|^{n},$$
valid for every $z\in T$ and $n\ge2$. Therefore, one has that
$$\left\|z^{-1}\mathcal{S}_{\mathbb{M},d}(\hat{g})(z)-\sum_{p=0}^{n-2}g_{p+1}z^{p}\right\|\le C A^nM_n|z|^{n-1}\le CAA_1M_1(AA_1)^{n-1}M_{n-1}|z|^{n-1},$$
regarding property $(mg)$ of $\mathbb{M}$. Observe that $\hat{f}(z)=\sum_{p\ge0}g_{p+1}z^p$, which concludes the proof.
\end{proof}

\subsection{Generalized multisummability}

As in the classical theory, the procedure of Borel-Laplace summation does not succeed when dealing with the formal solutions to some functional equations. As a matter of fact, a more general approach called multisummability is needed in the study of formal solutions to ordinary differential equations. A generalized theory of multisummability can be considered in this framework from different points of view. 

The theory of generalized multisummability deals with summability processes with respect to sequences obtained by algebraic actions on the initial strongly regular sequences handled. More precisely, given two sequences of positive real numbers $\mathbb{M}=(M_p)_{p\ge0}$ and $\mathbb{L}=(L_p)_{p\ge0}$, we denote $\mathbb{M}/\mathbb{L}:=(M_p/L_p)_{p\ge0}$. The comparison of sequences and their properties is studied in Sections 3.1 and 3.2~\cite{jkls} in a more general framework. In the present work, we focus on the case where both $\mathbb{M}$ and $\mathbb{L}$ are powers of some strongly regular sequence admitting a nonzero proximate order, the sequence of quotients being a positive power of the initial sequence, which turns out to be a strongly regular sequence (see Lemma~\ref{lema1}) admitting a nonzero proximate order (see Remark 4.8 (i),~\cite{sanz}). 

The iterated procedure approach to multisummability in the more general context of strongly regular sequences reads as follows.

\begin{defin}[Definition 4.22,~\cite{jkls}] \label{defi257}
Let $\mathbb{M}_j$, where $j=1,2$, be two strongly regular sequences admitting nonzero proximate orders. We assume that $\omega(\mathbb{M}_1)<\omega(\mathbb{M}_2)<2$. For $j=1,2$, we consider a strong kernel $e_j$ of $\mathbb{M}_j$-summability and its associated sequence of moments $m_j$. The formal power series $\hat{f}=\sum_{p\ge0}a_pz^p\in\mathbb{E}[[z]]$ is \emph{$(\mathbb{M}_1,\mathbb{M}_2)$-summable in the multidirection $(d_1,d_2)\in\R^2$} with $|d_1-d_2|<\pi(\omega(\mathbb{M}_2)-\omega(\mathbb{M}_1))/2$ if:
\begin{itemize}
\item[(i)] $\hat{g}=\hat{\mathcal{B}}_{m_1,z}(\hat{f}(z))$ is $\mathbb{M}_2/\mathbb{M}_1$-summable along direction $d_2$. Let $g$ denote such $\mathbb{M}_2/\mathbb{M}_1$-sum.
\item[(ii)] $g$ admits analytic continuation $g_1$ in an infinite sector $S_{d_1}$ of bisecting direction $d_1$ with $g_1\in\mathcal{O}^{\mathbb{M}_1}(S_{d_1},\mathbb{E})$.
\end{itemize}
The $(\mathbb{M}_1,\mathbb{M}_2)$-sum of $\hat{f}$ in the multidirection $(d_1,d_2)$ is given by $T_{e_1,d_1}g_1$, which determines a holomorphic function on a bounded sector of bisecting direction $d$ and opening slightly larger than $\omega(\mathbb{M}_1)\pi$ (see Proposition~\ref{prop2}). We denote the $(\mathbb{M}_1,\mathbb{M}_2)$-sum of $\hat{f}$ in the multidirection $(d_1,d_2)$ by $\mathcal{S}_{(\mathbb{M}_1,\mathbb{M}_2),(d_1,d_2)}(\hat{f})$, and $\mathbb{E}\{z\}_{(\mathbb{M}_1,\mathbb{M}_2),(d_1,d_2)}$ stands for the set of all formal power series $\hat{f}(z)\in\mathbb{E}[[z]]$ which are $(\mathbb{M}_1,\mathbb{M}_2)$-multisummable along the multidirection $(d_1,d_2)$.
\end{defin}

It is worth remarking that $\mathbb{M}$-summability along a direction is stated in~\cite{jkls} in terms of weight sequences satisfying less restrictive conditions than strongly regular sequences. However, any weight sequence admitting a nonzero proximate order is indeed a strongly regular sequence. According to Theorem 4.23~\cite{jkls}, we recall that the previous construction does not depend on the kernels for $\mathbb{M}_j$-summability considered in the process, $j=1,2$. Indeed, an equivalent definition of multisummability is the following.

\begin{prop}[Definition 4.1,~\cite{jkls}] \label{defi256}
In the situation of Definition~\ref{defi257}, the formal power series $\hat{f}$ is $(\mathbb{M}_1,\mathbb{M}_2)$-summable in the multidirection $(d_1,d_2)$ if there exist a formal power series $\hat{f}_1$ which is $\mathbb{M}_1$-summable in $d_1$ and a formal power series $\hat{f}_2$ which is $\mathbb{M}_2$-summable in $d_2$ such that $\hat{f}=\hat{f}_1+\hat{f}_2$. Moreover, the $(\mathbb{M}_1,\mathbb{M}_2)$-sum of $\hat{f}$ in the multidirection $(d_1,d_2)$ is given by $\mathcal{S}_{\mathbb{M}_1,d_1}(\hat{f}_1)+\mathcal{S}_{\mathbb{M}_2,d_2}(\hat{f}_2)$.
\end{prop}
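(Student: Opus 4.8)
The statement to prove is Proposition~\ref{defi256}, asserting the equivalence between the iterated (Borel-Laplace) definition of $(\mathbb{M}_1,\mathbb{M}_2)$-summability in Definition~\ref{defi257} and the decomposition characterization $\hat{f}=\hat{f}_1+\hat{f}_2$, together with the corresponding identity for the sums.

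\begin{proof}[Proof proposal]
The plan is to prove the two implications separately, exploiting the properties of the formal moment Borel transform $\hat{\Bo}_{m_1,z}$, the $e$-Laplace operators $T_{e_j,d_j}$, and the fact (Lemma~\ref{lema100}, applied iteratively, together with the algebra structure of the spaces of $\mathbb{M}$-summable series) that these operations interact well with polynomial factors and finite sums. First I would address the ``decomposition implies iterated-summability'' direction. Assume $\hat{f}=\hat{f}_1+\hat{f}_2$ with $\hat{f}_1\in\mathbb{E}\{z\}_{\mathbb{M}_1,d_1}$ and $\hat{f}_2\in\mathbb{E}\{z\}_{\mathbb{M}_2,d_2}$. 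Since $\hat{\Bo}_{m_1,z}$ is linear, $\hat{g}=\hat{\Bo}_{m_1,z}(\hat{f})=\hat{\Bo}_{m_1,z}(\hat{f}_1)+\hat{\Bo}_{m_1,z}(\hat{f}_2)$. For the first summand, $\mathbb{M}_1$-summability of $\hat{f}_1$ along $d_1$ means $\hat{\Bo}_{m_1,z}(\hat{f}_1)$ converges near $0$ and extends to a function in $\Oo^{\mathbb{M}_1}(\hat{S}_{d_1},\mathbb{E})$; in particular it is holomorphic and of $\mathbb{M}_1$-growth on an infinite sector around $d_1$. For the second summand, one must show $\hat{\Bo}_{m_1,z}(\hat{f}_2)$ is $\mathbb{M}_2/\mathbb{M}_1$-summable along $d_2$: this is essentially the classical statement that acting by $\hat{\Bo}_{m_1,z}$ on an $\mathbb{M}_2$-summable series produces an $\mathbb{M}_2/\mathbb{M}_1$-summable series whose $\mathbb{M}_2/\mathbb{M}_1$-sum is the $m_1$-Borel transform of the $\mathbb{M}_2$-sum (a moment-sequence version of the relation between $\hat{\Bo}$ and convergent/Gevrey series); here one uses Lemma~\ref{lema_2} to pass between $m_j$ and $\mathbb{M}_j$ and the growth estimates of the kernels. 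Combining, $\hat{g}$ is the sum of a convergent-near-$0$, $\mathbb{M}_2/\mathbb{M}_1$-summable series (from $\hat{f}_2$) and a series (from $\hat{f}_1$) whose Borel transform is already analytic near $0$; hence $\hat{g}$ is $\mathbb{M}_2/\mathbb{M}_1$-summable along $d_2$, and its $\mathbb{M}_2/\mathbb{M}_1$-sum $g$ equals $\hat{\Bo}_{m_1,z}(\mathcal{S}_{\mathbb{M}_2,d_2}(\hat{f}_2))$ plus the analytic germ of $\hat{\Bo}_{m_1,z}(\hat{f}_1)$. This $g$ continues analytically along $d_1$ (the first piece because $\hat{f}_1$ is $\mathbb{M}_1$-summable, the second because its $\mathbb{M}_2/\mathbb{M}_1$-sum, being built from an $\mathbb{M}_2$-summable datum, has at worst $\mathbb{M}_2/\mathbb{M}_1$-growth which, on the narrower sector where the condition $|d_1-d_2|<\pi(\omega(\mathbb{M}_2)-\omega(\mathbb{M}_1))/2$ guarantees room, is dominated appropriately) and lies in $\Oo^{\mathbb{M}_1}(S_{d_1},\mathbb{E})$; finally applying $T_{e_1,d_1}$ and using its linearity together with the inversion relation $T_{e_1,d_1}\circ\hat{\Bo}_{m_1,z}=\mathrm{id}$ on the relevant spaces recovers $\mathcal{S}_{\mathbb{M}_1,d_1}(\hat{f}_1)+\mathcal{S}_{\mathbb{M}_2,d_2}(\hat{f}_2)$, which is the claimed sum.

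For the converse, suppose $\hat{f}$ is $(\mathbb{M}_1,\mathbb{M}_2)$-summable in $(d_1,d_2)$ in the sense of Definition~\ref{defi257}, with $g$ the $\mathbb{M}_2/\mathbb{M}_1$-sum of $\hat{g}=\hat{\Bo}_{m_1,z}(\hat{f})$ and $g_1\in\Oo^{\mathbb{M}_1}(S_{d_1},\mathbb{E})$ its continuation along $d_1$. The idea is to split $g$ near infinity using the two relevant sectors and the structure of $\Oo^{\mathbb{M}_1}$ versus $\Oo^{\mathbb{M}_2/\mathbb{M}_1}$: one writes $g = g_1 - (g_1 - g)$, where $g_1$ has $\mathbb{M}_1$-growth on $S_{d_1}$ and $g_1-g$ is flat of $\mathbb{M}_1$-type away from $d_1$ while of $\mathbb{M}_2/\mathbb{M}_1$-growth around $d_2$, so that $T_{e_1,d_1}g_1$ produces an $\mathbb{M}_1$-summable series $\hat{f}_1$ along $d_1$, and $\hat{f}_2:=\hat{f}-\hat{f}_1$ is then shown to be $\mathbb{M}_2$-summable along $d_2$ by checking that $\hat{\Bo}_{m_1,z}(\hat{f}_2)$ is $\mathbb{M}_2/\mathbb{M}_1$-summable along $d_2$ and applying again the composition $T_{e_1,d_2}$ in combination with the relation between iterated Borel-Laplace sums; alternatively, and more cleanly, one invokes the independence of the construction on the chosen kernels (Theorem 4.23 of~\cite{jkls}) together with a Cauchy-Heine / relative-cohomology style decomposition of the difference of the two one-level sums, exactly as in the classical multisummability theory (cf.~\cite{balser,loday}), transported to the strongly regular setting via Lemmas~\ref{lema_1}, \ref{lema0}, \ref{lema_2}. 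The estimates needed for flatness and growth of the pieces are the ones packaged in Definitions~\ref{defi3} and~\ref{defin:kernel-functions} and in Lemma~\ref{lema_1}.

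I expect the main obstacle to be the second ("iterated implies decomposition") direction, specifically the construction of the splitting $g=g_1-(g_1-g)$ with the correct simultaneous growth/flatness behavior in the two directions $d_1$ and $d_2$: one needs the difference to be flat of $\mathbb{M}_1$-exponential type in a full punctured neighborhood of $d_1$ yet only of $\mathbb{M}_2/\mathbb{M}_1$-type near $d_2$, and then to recover a genuine formal-series decomposition $\hat{f}=\hat{f}_1+\hat{f}_2$ — not merely a functional one — by reading off Taylor coefficients; this is where the condition $|d_1-d_2|<\pi(\omega(\mathbb{M}_2)-\omega(\mathbb{M}_1))/2$ on the multidirection is indispensable, since it is exactly the gap ensuring the two sectors of opening slightly larger than $\omega(\mathbb{M}_1)\pi$ and $\omega(\mathbb{M}_2)\pi$ overlap compatibly. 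The forward direction, by contrast, is essentially linearity of $\hat{\Bo}_{m_1,z}$ and $T_{e_1,d_1}$ plus the inversion $T_{e_1,d_1}\circ\hat{\Bo}_{m_1,z}=\mathrm{id}$, which is routine once the one-level theory of Definitions~\ref{def198} and~\ref{prop2} is in hand.
\end{proof}
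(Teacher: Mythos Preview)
The paper does not supply a proof of Proposition~\ref{defi256}: the statement is imported verbatim from~\cite{jkls} (Definition~4.1 there), and the equivalence with the iterated Borel--Laplace Definition~\ref{defi257} is quoted as Theorem~4.23 of~\cite{jkls}. So there is no ``paper's own proof'' to compare your attempt against; you are sketching a proof of a theorem that the authors deliberately outsource.

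Judged on its own terms, your outline has the right architecture but leaves real gaps. In the forward direction you assert that the $\mathbb{M}_2/\mathbb{M}_1$-sum of $\hat{\Bo}_{m_1,z}(\hat{f}_2)$ continues to $S_{d_1}$ with $\mathbb{M}_1$-growth, saying only that its growth ``is dominated appropriately''; this is precisely the acceleration step (passing from the $m_2$-Borel plane to the $m_1$-Borel plane via an integral accelerator operator) and it is not a consequence of the lemmas you cite. Likewise the identity $T_{e_1,d_1}\circ\hat{\Bo}_{m_1,z}=\mathrm{id}$ applied to $\hat{f}_2$ does not directly give $\mathcal{S}_{\mathbb{M}_2,d_2}(\hat{f}_2)$: one has to show that $T_{e_1,d_1}$ applied to the $\mathbb{M}_2/\mathbb{M}_1$-sum of $\hat{\Bo}_{m_1,z}(\hat{f}_2)$ reproduces the $\mathbb{M}_2$-sum of $\hat{f}_2$, which again is an acceleration/compatibility statement proved in~\cite{jkls}, not here. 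In the converse direction you essentially state the desired conclusion (``Cauchy--Heine / relative-cohomology style decomposition \ldots\ transported to the strongly regular setting'') without carrying out any step; producing $\hat{f}_1$ from $g_1$ alone and then verifying that $\hat{f}_2:=\hat{f}-\hat{f}_1$ is genuinely $\mathbb{M}_2$-summable (not merely $\mathbb{M}_2/\mathbb{M}_1$-summable after one Borel) is the substantive part of Theorem~4.23 in~\cite{jkls} and requires the analytic decomposition machinery developed there. If you intend a self-contained proof, those two points are where the work lies; otherwise, the honest move is to cite~\cite{jkls} as the paper does.
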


The splitting of $\hat{f}$ into a sum in Proposition~\ref{defi256} is essentially unique (see Proposition 4.2,~\cite{jkls}). This equivalent definition of multisummability allows to give a direct proof of the following result.

\begin{lemma}\label{lema296}
Let $\hat{f}(z)$ be a formal power series, and let $k\in\N_0$. We define the formal power series $\hat{g}(z):=z^k \hat{f}(z)$. 

Let $\mathbb{M}_j$, where $j=1,2$, be two strongly regular sequences admitting nonzero proximate orders. We assume that $\omega(\mathbb{M}_1)<\omega(\mathbb{M}_2)<2$. We choose $(d_1,d_2)\in\R^2$ with $|d_1-d_2|<\pi(\omega(\mathbb{M}_2)-\omega(\mathbb{M}_1))/2$. Then, the following statements are equivalent:
\begin{itemize}
\item The formal power series $\hat{f}$ is $(\mathbb{M}_1,\mathbb{M}_2)$-summable in the multidirection $(d_1,d_2)$.
\item The formal power series $\hat{g}$ is $(\mathbb{M}_1,\mathbb{M}_2)$-summable in the multidirection $(d_1,d_2)$.
\end{itemize}
If one of the previous equivalent statements hold, then one has that 
\begin{equation}\label{e283}
\mathcal{S}_{(\mathbb{M}_1,\mathbb{M}_2),(d_1,d_2)}(\hat{g})=z^k\mathcal{S}_{(\mathbb{M}_1,\mathbb{M}_2),(d_1,d_2)}(\hat{f}).
\end{equation}

\end{lemma}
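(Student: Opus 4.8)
The plan is to exploit the equivalent characterization of multisummability in Proposition~\ref{defi256} together with the single-level result already established in Lemma~\ref{lema100}. Since multisummability of $\hat{f}$ in the multidirection $(d_1,d_2)$ is equivalent to a decomposition $\hat{f}=\hat{f}_1+\hat{f}_2$ with $\hat{f}_1$ being $\mathbb{M}_1$-summable in $d_1$ and $\hat{f}_2$ being $\mathbb{M}_2$-summable in $d_2$, the multiplication by $z^k$ distributes over this sum: $\hat{g}=z^k\hat{f}=z^k\hat{f}_1+z^k\hat{f}_2$. Lemma~\ref{lema100} applied with $\mathbb{M}=\mathbb{M}_1$ and direction $d_1$ shows that $z^k\hat{f}_1$ is $\mathbb{M}_1$-summable in $d_1$, and the same lemma applied with $\mathbb{M}=\mathbb{M}_2$ and direction $d_2$ shows that $z^k\hat{f}_2$ is $\mathbb{M}_2$-summable in $d_2$; hence the displayed decomposition of $\hat{g}$ exhibits it as $(\mathbb{M}_1,\mathbb{M}_2)$-summable in $(d_1,d_2)$, proving the first implication.

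For the converse, suppose $\hat{g}$ is $(\mathbb{M}_1,\mathbb{M}_2)$-summable in $(d_1,d_2)$, with essentially unique decomposition $\hat{g}=\hat{g}_1+\hat{g}_2$. Here I would be slightly careful: I want to recover a decomposition of $\hat{f}$, so I would first note that $z^k\mid \hat g$ forces a compatible splitting. Since the splitting in Proposition~\ref{defi256} is essentially unique (Proposition 4.2 of~\cite{jkls}), and writing $\hat f=\hat f_1+\hat f_2$ for the analogous decomposition (whose existence I must derive), the cleanest route is: apply Lemma~\ref{lema100} in each level separately to the two summands of $\hat g$. Concretely, the polynomial $z^k$ has trivial (convergent) summation at both levels, and the set of $\mathbb{M}_j$-summable series in a direction is an algebra; moreover Lemma~\ref{lema100} gives the \emph{equivalence}, so from $\hat g_j$ being $\mathbb{M}_j$-summable in $d_j$ one can ask whether $\hat g_j$ is divisible by $z^k$. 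If it is, set $\hat f_j:=z^{-k}\hat g_j$, which is $\mathbb{M}_j$-summable in $d_j$ by Lemma~\ref{lema100}, and then $\hat f:=\hat f_1+\hat f_2$ satisfies $z^k\hat f=\hat g=z^k\hat f$, so $\hat f$ is the desired series. The divisibility of each $\hat g_j$ by $z^k$ is not automatic from the decomposition alone, so I would instead argue directly: since $z^k \hat f = \hat g_1 + \hat g_2$, the low-order coefficients match, and by an iterative argument reducing to $k=1$ (as in the proof of Lemma~\ref{lema100}), one peels off one power of $z$ at a time, at each stage adjusting the two summands by a monomial (which lies in both algebras of summable series) so that the remainders stay divisible.

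The formula \eqref{e283} then follows from the additivity of the multisum in Proposition~\ref{defi256} together with \eqref{e282}: with $\hat f=\hat f_1+\hat f_2$ and $\hat g=z^k\hat f_1+z^k\hat f_2$ the corresponding decomposition of $\hat g$, one computes
$$
\mathcal{S}_{(\mathbb{M}_1,\mathbb{M}_2),(d_1,d_2)}(\hat g)=\mathcal{S}_{\mathbb{M}_1,d_1}(z^k\hat f_1)+\mathcal{S}_{\mathbb{M}_2,d_2}(z^k\hat f_2)=z^k\mathcal{S}_{\mathbb{M}_1,d_1}(\hat f_1)+z^k\mathcal{S}_{\mathbb{M}_2,d_2}(\hat f_2)=z^k\mathcal{S}_{(\mathbb{M}_1,\mathbb{M}_2),(d_1,d_2)}(\hat f),
$$
using \eqref{e282} in the middle step.

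The main obstacle I anticipate is the bookkeeping in the converse direction: showing that the (essentially unique) two-term decomposition of $\hat g$ can be chosen so that each summand is divisible by $z^k$, or equivalently that the decomposition of $\hat f$ transported through multiplication by $z^k$ is consistent. This is where the iterative reduction to $k=1$ and the use of the fact that monomials $cz^{j}$ belong to $\mathbb{E}\{z\}_{\mathbb{M}_1,d_1}\cap\mathbb{E}\{z\}_{\mathbb{M}_2,d_2}$ (so one has freedom to shift a monomial between the two summands) does the work; everything else is a direct consequence of Lemma~\ref{lema100} and Proposition~\ref{defi256}.
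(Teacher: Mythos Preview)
Your proposal is correct and follows essentially the same route as the paper: both use the splitting characterization of Proposition~\ref{defi256} together with Lemma~\ref{lema100} on each summand, and both invoke the essential uniqueness of the decomposition for the converse. If anything, you are more careful than the paper about the one nontrivial point in the converse---namely, why the summands $\hat g_1,\hat g_2$ can be taken divisible by $z^k$---whereas the paper simply asserts that ``one can choose a splitting in which $\hat g_j=z^k\hat h_j$''; your monomial-shifting argument (subtracting the degree-$<k$ part of $\hat g_1$ from $\hat g_1$ and adding it to $\hat g_2$, using that polynomials lie in both $\mathbb{E}\{z\}_{\mathbb{M}_1,d_1}$ and $\mathbb{E}\{z\}_{\mathbb{M}_2,d_2}$) is exactly what makes that assertion valid.
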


\begin{proof}
If $\hat{f}$ is $(\mathbb{M}_1,\mathbb{M}_2)$-summable in the multidirection $(d_1,d_2)$, then $\hat{f}=\hat{f}_1+\hat{f}_2$, with $\hat{f}_1$ being $\mathbb{M}_1$-summable in $d_1$ and $\hat{f}_2$ being $\mathbb{M}_2$-summable in $d_2$. Then, $z^k\hat{f}_1$ is $\mathbb{M}_1$-summable in $d_1$ and $z^k\hat{f}_2$ is $\mathbb{M}_2$-summable in $d_2$. Then, $\hat{g}=z^k\hat{f}=z^k\hat{f}_1+z^k\hat{f}_2$, which entails that $\hat{g}$ is $(\mathbb{M}_1,\mathbb{M}_2)$-summable in the multidirection $(d_1,d_2)$.

On the other hand, if $\hat{g}$ is $(\mathbb{M}_1,\mathbb{M}_2)$-summable in the multidirection $(d_1,d_2)$, then $\hat{g}=\hat{g}_1+\hat{g}_2$, with $\hat{g}_1$ being $\mathbb{M}_1$-summable in $d_1$ and $\hat{g}_2$ being $\mathbb{M}_2$-summable in $d_2$. As the splitting is essentially unique, and $\hat{g}(z):=z^k \hat{h}(z)$, one can choose a splitting in which $\hat{g}_j=z^k\hat{h}_j$, for $j=1,2$. Lemma~\ref{lema100} guarantees that the formal power series $\hat{g}_j$ is $\mathbb{M}_j$-summable along direction $d_j$ iff $\hat{h}_j$ is $\mathbb{M}_j$-summable along direction $d_j$. Then, $\hat{f}$ can be written in the form $\hat{f}=\hat{h}_1+\hat{h}_2$, where $\hat{h}_j$ is $\mathbb{M}_j$-summable along direction $d_j$, for $j=1,2$, leading to multisummability of $\hat{f}$.  

Regarding the construction of the sums above, one also arrives at (\ref{e283}).

\end{proof}

\section{On moment differentiation}\label{secmomdif}

In this section, we focus our attention on the concept of a moment derivative and recall some properties associated with this notion. We also state some new results to be applied in the work. 

The notion of a generalized derivative operator allows to consider functional problems under greater generality. More precisely, we deal with the following formal operator.

\begin{defin}\label{defi:derivative}
Let $(\mathbb{E},\left\|\cdot\right\|_{\mathbb{E}})$ be a complex Banach space. For any fixed sequence of moments $(m_e(p))_{p\ge 0}$ we define the \emph{$m$-differential operator} $\partial_{m_e,t}\colon\mathbb{E}[[t]]\to\mathbb{E}[[t]]$ by the formula:
\begin{equation*}
\partial_{m_e,z}\left(\sum_{p\ge 0}\frac{a_{p}}{m_e(p)}z^{p}\right):=\sum_{p\ge 0}\frac{a_{p+1}}{m_e(p)}z^{p}.
\end{equation*}
\end{defin}

Usual derivatives are recovered when considering the moment sequence $(p!)_{p\ge0}$, which is associated with the kernel function $e(z)=z\exp(-z)$. In addition to this, moment derivatives can be read in terms of Caputo $\alpha$-fractional derivatives $\partial_{z}^{\alpha}$ as follows. Let $\alpha$ be a positive rational number. The moment sequence $m_{\alpha}:=(\Gamma(1+\alpha p))_{p\ge0}$ is associated with the kernel function $e(z)=\frac{1}{\alpha}z^{\frac{1}{\alpha}}\exp(-z^{\frac{1}{\alpha}})$ and the fractional derivative of order $\alpha$ is defined on formal power series in $z^{\alpha}$ by
$$\partial_{z}^{\alpha}\left(\sum_{p\ge0}\frac{a_p}{\Gamma(1+\alpha p)}z^{\alpha p}\right)=\sum_{p\ge0}\frac{a_{p+1}}{\Gamma(1+\alpha p)}z^{\alpha p}.$$
Therefore, one has 
$$(\partial_{m_{\alpha}}\hat{f})(z^{\alpha})=\partial_{z}^{\alpha}(\hat{f}(z^{\alpha})),$$
for every $\hat{f}\in\mathbb{E}[[z]]$. %

We may also observe that $q-$derivatives defined by
$$D_{q,z}f(z)=\frac{f(qz)-f(z)}{qz-z}$$
for some fixed $q\in (0,1)$ can be interpreted in terms of the moment derivatives associated with the moment sequence $([p]_{q}!)_{p\ge0}$, with $[p]_{q}!=[1]_q[2]_q\cdots [p]_{q}$ and $[j]_{q}=\sum_{h=0}^{j-1}q^h$. 

We also have the following result.

\begin{lemma}[Lemma 3,~\cite{LMS2}]\label{lema347}
Let $m_j=(m_j(p))_{p\ge0}$ for $j=1,2$ be two sequences of moments. Then,
\begin{itemize}
\item[-] The sequence $m_1m_2=(m_1(p)m_2(p))_{p\ge0}$ is a sequence of moments.
\item[-] $\mathcal{B}_{m_1,z}\circ\partial_{m_2,z}\equiv \partial_{m_1m_2,z}\circ\hat{\mathcal{B}}_{m_1,z}$ as operators defined in $\mathbb{E}[[z]]$.
\end{itemize}
\end{lemma}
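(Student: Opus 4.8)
The plan is to treat the statement as two independent assertions and dispatch the shorter one first: the operator identity is a purely formal manipulation on $\mathbb{E}[[z]]$, whereas the multiplicativity of the class of moment sequences is the part that carries analytic content (I read the left--hand side of Lemma~\ref{lema347} as the formal moment--Borel transform of Definition~\ref{defin:borel-formal}, so that both sides are genuinely operators on $\mathbb{E}[[z]]$).

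For the identity $\hat{\mathcal{B}}_{m_1,z}\circ\partial_{m_2,z}\equiv\partial_{m_1m_2,z}\circ\hat{\mathcal{B}}_{m_1,z}$, I would observe that both sides are $\C$--linear and act termwise, so it suffices to compare them on a monomial $z^{p}$, $p\ge0$. For $p=0$ both compositions annihilate the constant, since $\partial_{m_2,z}(1)=0$ and $\partial_{m_1m_2,z}(1)=0$. For $p\ge1$ I would simply unwind the definitions: on one side, $\partial_{m_2,z}(z^{p})=\frac{m_2(p)}{m_2(p-1)}z^{p-1}$ and then $\hat{\mathcal{B}}_{m_1,z}$ divides the coefficient by $m_1(p-1)$, producing $\frac{m_2(p)}{m_1(p-1)m_2(p-1)}z^{p-1}$; on the other side, $\hat{\mathcal{B}}_{m_1,z}(z^{p})=\frac{1}{m_1(p)}z^{p}$ and then $\partial_{m_1m_2,z}$ multiplies the coefficient by $\frac{m_1(p)m_2(p)}{m_1(p-1)m_2(p-1)}$, yielding the same $\frac{m_2(p)}{m_1(p-1)m_2(p-1)}z^{p-1}$. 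Hence the two operators coincide, and no further normalisation is required.

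For the first bullet I would build a kernel function for $m_1m_2$ out of the kernels $e_1,e_2$ associated with $m_1,m_2$ via \eqref{kernel3}. The natural candidate is the multiplicative (Mellin) convolution
\[
e(t):=\int_0^{\infty}e_1\!\left(\frac{t}{\tau}\right)e_2(\tau)\,\frac{d\tau}{\tau},
\]
which is well defined thanks to the power--type behaviour of $e_1,e_2$ near the origin \eqref{eq:est_e} and their exponential flatness at infinity \eqref{e162}; Fubini's theorem then gives $\int_0^{\infty}t^{x-1}e(t)\,dt=m_1(x)m_2(x)$ for $\mathrm{Re}\,x>0$, so that the moments of $e$ are exactly $(m_1(p)m_2(p))_{p\ge0}$, while the associated $E$ is recovered from these moments through \eqref{kernel4}. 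It then remains to check that $e$ (and $E$) meet the list of defining properties of a kernel function: holomorphy on the relevant sector, a power--type bound near $0$, the flatness estimate of type \eqref{e162}, and for $E$ the growth \eqref{kernel2} and the decay \eqref{e202}, the latter following from \eqref{kernel4} and Lemma~\ref{lema_2} applied to the product sequence.

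The main obstacle is the flatness estimate for the convolution $e$. The idea is to split the integral at $|\tau|=|t|^{1/2}$: on the range where $|t/\tau|$ is large one invokes the flatness of $e_1$ and bounds the remaining factor $e_2(\tau)$ by its integrable near--origin estimate, and symmetrically on the other range one uses the flatness of $e_2$; adding the two contributions one gets a bound of the form $C\exp(-c\,M_0(|t|^{1/2}/K_0))$ for a suitable auxiliary function $M_0$, and Lemmas~\ref{lema0} and~\ref{lema1} (scaling and comparison of the functions $M(\cdot)$ attached to positive powers of a strongly regular sequence) convert this into the required $C\exp(-M(|t|/K))$ for the function $M$ associated with $m_1m_2$. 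The fact that $m_1m_2$ is itself strongly regular and still admits a nonzero proximate order is inherited from the corresponding stability of these notions under positive powers (Lemma~\ref{lema1}), which closes the argument.
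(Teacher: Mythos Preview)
The paper does not prove this lemma; it merely quotes it from \cite{LMS2}, so there is no in-paper argument to compare against. I therefore comment on your proposal on its own merits.

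Your treatment of the second bullet is correct and is exactly the intended termwise check: both operators are $\C$-linear and act coefficientwise, so verifying the identity on monomials $z^p$ suffices, and your computation is accurate.

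For the first bullet your Mellin-convolution construction $e(t)=\int_0^\infty e_1(t/\tau)e_2(\tau)\,d\tau/\tau$ is the standard route (this is how product kernels are built already in Balser's book and in \cite{sanz,lastramaleksanz}), and your Fubini computation of the moments is fine. The gap is in your closing sentence: you invoke Lemma~\ref{lema1} to deduce that $m_1m_2$ is strongly regular and admits a nonzero proximate order, but Lemma~\ref{lema1} only concerns \emph{powers} $\mathbb{M}^s$ of a single sequence, not termwise products of two unrelated strongly regular sequences. That the product of two strongly regular sequences is again strongly regular needs to be checked directly ((lc) and (mg) are immediate from the definitions; (snq) requires a short separate argument), and the existence of a nonzero proximate order for the product is not a consequence of Lemma~\ref{lema1} either. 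Moreover, Definition~\ref{defin:kernel-functions} carries the restriction $\omega(\mathbb{M})<2$, whereas for the product one expects $\omega(\mathbb{M}_1\cdot\mathbb{M}_2)=\omega(\mathbb{M}_1)+\omega(\mathbb{M}_2)$, which can well exceed $2$; in the cited references this is handled by a ramification device (working with branched kernels on the Riemann surface), which your sketch does not mention. So the convolution idea and the flatness splitting at $|\tau|=|t|^{1/2}$ are the right ingredients, but the last step as written is a non sequitur and should be replaced by the direct stability arguments for products (and the ramification trick when $\omega\ge2$) found in \cite{LMS2,lastramaleksanz}.
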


Moment differentiation can be naturally extended to holomorphic functions on some neighborhood of the origin by identifying the function with its Taylor series at the origin. However, this formal differentiation does not preserve convergence unless some regularity property is assumed for the sequence of moments. However, if one departs from a strongly regular sequence $\mathbb{M}$ which admits a nonzero proximate order, and considers a pair of kernel functions associated with it and then constructs the corresponding sequence of moments $m$, then it holds that $\mathbb{M}$ and $m$ generate the same ultraholomorphic space of functions (i.e., they are equivalent sequences) and $m$ is indeed a strongly regular sequence (see Remark 3.8,~\cite{lastramaleksanz}). In~\cite{LMS2}, the definition of moment differentiation was also extended to the generalized sum along a direction of a formal power series as the generalized sum along that same direction of the formal moment derivative of the initial formal power series. For that purpose, the moment derivative was also provided for any function defined on some neighborhood of the origin with holomorphic extension to an infinite sector and with certain generalized exponential growth at infinity (as described in Definition~\ref{defi3}). Indeed, the first part of Theorem 3~\cite{LMS2} reads as follows.

\begin{theo}
Let $m_e=(m_e(p))_{p\ge0}$ be a sequence of moments. We also fix $d,\theta,r\in\R$ with $\theta,r>0$ and $\varphi\in\mathcal{O}(\hat{S}_d(\theta;r),\mathbb{E})$. Then, there exists $0<\tilde{r}<r$ such that for all $0<\theta_1<\theta$, all $z\in\hat{S}_{d}(\theta_1;\tilde{r})$ and all $n\in\N_0$, one has that
\begin{equation}\label{e268}
\partial_{m_e,z}^{n}\varphi(z)=\frac{1}{2\pi i}\oint_{\Gamma_{z}}\varphi(\omega)\int_0^{\infty(\tau)}\xi^n E(z\xi)\frac{e(\omega \xi)}{\omega \xi}d\xi d\omega,
\end{equation}
with $\tau=\tau(\omega)\in(-\arg(\omega)-\frac{\omega(m_{e})\pi}{2},-\arg(\omega)+\frac{\omega(m_{e})\pi}{2})$. The integration path $\Gamma_z$ is a deformation of the circle $\{|\omega|=r_1\}$, for any choice of $0<r_1<r$, which depends on $z$. More precisely, such deformation consists of substituting some arc of the circle contained in $\hat{S}_{d}(\theta;r)$ by a simple path which attains an adequate sufficient distance to the origin while it remains inside $S_{d}(\theta)$.
\end{theo}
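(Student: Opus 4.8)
The plan is to start from the Cauchy integral representation of a moment derivative of a holomorphic function and then feed in the known pair of kernel functions $(e,E)$ for $m_e$-summability, exploiting the Borel–Laplace type identity that relates $e$, $E$ and the moments $m_e(p)$. First I would recall that since $\varphi$ is holomorphic on $\hat S_d(\theta;r)$, its Taylor coefficients at the origin are given by $\varphi_p=\frac{1}{2\pi i}\oint_{|\omega|=r_1}\frac{\varphi(\omega)}{\omega^{p+1}}d\omega$ for any $0<r_1<r$, and by definition of $\partial_{m_e,z}$ one has $\partial_{m_e,z}^n\varphi(z)=\sum_{p\ge 0}\frac{m_e(p+n)}{m_e(p)}\varphi_{p+n}z^p$. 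I would then write $\frac{m_e(p+n)}{m_e(p)}\varphi_{p+n}z^p$ as a double integral: using $m_e(p+n)=\int_0^{\infty(\tau)}\xi^{p+n}e(\omega\xi)\frac{d\xi}{\xi}$ after a suitable change of variables tied to the outer integration variable $\omega$ (this is where the dependence $\tau=\tau(\omega)$ enters, chosen in the interval $(-\arg\omega-\tfrac{\omega(m_e)\pi}{2},-\arg\omega+\tfrac{\omega(m_e)\pi}{2})$ so that $\omega\xi$ stays in the half-plane of convergence of $m_e$), and using $\frac{1}{m_e(p)}$ together with $E(z\xi)=\sum_{p\ge 0}\frac{(z\xi)^p}{m_e(p)}$ to absorb the sum over $p$.

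The key computational step is to interchange the sum over $p$ with the two integrals. Concretely, after substituting the integral formula for $m_e(p+n)$ and for $\varphi_{p+n}$, one recognizes $\sum_{p\ge 0}\frac{(z\xi)^p}{m_e(p)}\cdot\frac{1}{\omega^{p+1}}\cdot(\text{remaining }\omega\text{-powers})$ as a geometric-like series that resums to a multiple of $E(z\xi/\omega)$ or, after the change of variables in $\xi$, to $E(z\xi)\frac{e(\omega\xi)}{\omega\xi}$; collecting everything yields exactly the claimed formula
\[
\partial_{m_e,z}^{n}\varphi(z)=\frac{1}{2\pi i}\oint_{\Gamma_z}\varphi(\omega)\int_0^{\infty(\tau)}\xi^n E(z\xi)\frac{e(\omega\xi)}{\omega\xi}\,d\xi\,d\omega .
\]
To justify the interchange one needs absolute convergence: on the circle $|\omega|=r_1$ the factor $\varphi(\omega)$ is bounded, the flatness estimate \eqref{e162} gives $|e(\omega\xi)|\le C\exp(-M(|\omega\xi|/K))$ which controls the $\xi$-integral at infinity, the estimate \eqref{eq:est_e} $|e(z)|\le C|z|^{\alpha}$ controls it near $\xi=0$, and \eqref{kernel2} together with Lemma~\ref{lema_2} (equivalence of $m_e$ and $\mathbb{M}$) and Lemma~\ref{lema_1} shows that $\sum_p\xi^p|z|^p/m_e(p)$ times the decaying exponential is summable and integrable for $|z|$ small; this forces the restriction to $\hat S_d(\theta_1;\tilde r)$ with $\tilde r$ small enough.

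The main obstacle, and the reason the circle $\{|\omega|=r_1\}$ must be replaced by the deformed path $\Gamma_z$, is that the naive contour does not give enough room for the inner Laplace-type integral to converge uniformly: one needs the ray $\arg(\omega\xi)=\arg\omega+\tau$ to lie inside the sector $S_0(\omega(m_e)\pi-\varepsilon)$ where $e$ is flat \eqref{e162}, and simultaneously $z\xi$ must stay in a region where $E$ does not blow up faster than the decay of $e$. Where the circle $|\omega|=r_1$ exits the sector $S_d(\theta)$ — i.e.\ on the arc that would otherwise be close to the origin in a bad direction — one cannot choose an admissible $\tau(\omega)$; the remedy is to push that portion of the contour outward, keeping it inside $S_d(\theta)$ but at a distance from the origin large enough that $\varphi$ is still defined and the inner integral converges. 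I would therefore devote the bulk of the argument to: (i) constructing $\Gamma_z$ explicitly as a deformation of $\{|\omega|=r_1\}$ with the stated properties, invoking Cauchy's theorem to see that the contour integral is unchanged since $\varphi(\omega)/\omega^{p+1}$ is holomorphic in the swept region; (ii) on $\Gamma_z$, choosing the function $\tau(\omega)$ continuously in the allowed interval; and (iii) carrying out the uniform estimates above to legitimize Fubini and the resummation. Lemma~\ref{lema0} (rescaling of $M$) is the natural tool for matching the exponential decay of $e$ against the exponential growth of $E$ when forming the product $E(z\xi)e(\omega\xi)$, which is the technical heart of step (iii).
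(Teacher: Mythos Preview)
The paper does not prove this statement: it is quoted verbatim as ``the first part of Theorem 3~\cite{LMS2}'' from the authors' earlier work, with no argument given here. So there is no proof in the present paper to compare your proposal against.

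That said, your strategy is essentially the right one for the disc part and matches what one finds in \cite{LMS2}: write the Taylor coefficients via Cauchy, insert the integral representation $m_e(p+n)=\int_0^{\infty(\tau)}t^{p+n-1}e(t)\,dt$ (after the change $t=\omega\xi$, which is exactly what forces $\tau\in(-\arg\omega-\tfrac{\omega(m_e)\pi}{2},-\arg\omega+\tfrac{\omega(m_e)\pi}{2})$), and resum with $E(z\xi)=\sum_p (z\xi)^p/m_e(p)$. The interchange is justified by the flatness of $e$ (estimate \eqref{e162}) against the growth of $E$ (estimate \eqref{kernel2}), together with Lemma~\ref{lema0}, for $|z|$ small; this is why $\tilde r$ appears.

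Your explanation of the contour deformation, however, is off. The circle $\{|\omega|=r_1\}$ lies entirely inside $D(0,r)\subset\hat S_d(\theta;r)$, so it never ``exits the sector'' and there is no angular obstruction to choosing $\tau(\omega)$ on the full circle. The genuine obstruction is radial: for $z\in S_d(\theta_1)$ with $|z|$ large, the inner integral diverges on $|\omega|=r_1$ because the growth $|E(z\xi)|\le\tilde c\exp(M(|z\xi|/\tilde k))$ beats the decay $|e(\omega\xi)|\le C\exp(-M(|\omega\xi|/K))$ whenever $|\omega|$ is too small relative to $|z|$. The cure is to push the portion of the circle lying in $S_d(\theta)$ outward to radius $R\sim|z|$ (in the proof of Theorem~\ref{teo2} of this paper, which adapts the argument, one takes $R=\rho(2)K|z|/\tilde k$), so that the decay of $e$ wins again. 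Once the integral representation is thus shown to make sense for all $z\in\hat S_d(\theta_1;\tilde r)$, one argues by analytic continuation from the disc, where the identity has been verified termwise. You touched on the growth/decay competition but conflated it with a nonexistent angular issue, and you did not make the analytic continuation step to the unbounded sector explicit.
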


At this point, we give a step forward in order to define the moment derivatives on functions defined on some sectorial region, and which can be extended under certain generalized exponential growth to infinity.

\begin{theo}\label{teo2}
 Let $m_e=(m_e(p))_{p\ge 0}$ be a sequence of moments, and $\M=(M_p)_{p\ge 0}$ be a strongly regular sequence admitting a nonzero proximate order. Let $d_1,d_2\in\R$ satisfying
 $|d_1-d_2|<a \frac{\omega(\M)\pi}{2}$ for some $a>0$. We choose $\hat{u}\in\EE\{z\}_{\M^a,d_2}$ and write $u=S_{\M^a,d_2}(\hat{u})\in\Oo(G,\EE)$, for some sectorial region $G=G_{d_2}(\theta)$ with $\theta>a\pi\omega(\M)$, as seen in Figure \ref{fig2}. Assume moreover that $u$ can be extended (the extension is also denoted by $u$) to an infinite sector of bisecting direction $d_1$, with $u\in\Oo^{\M^b}(S_{d_1},\EE)$. Then, the following statements hold:
\begin{itemize}
\item[(a)] There exists $\tilde{r}>0$ such that for every $S'\prec S_{d_1}$ and $G'\prec G\cap D(0,\tilde{r})$ and all $z\in \tilde{S}:=S'\cup G'$ and $n\in\N_0$
$$\partial_{m_e,z}^{n}u(z)=\frac{1}{2\pi i}\oint_{\Lambda_z}u(w)\int_{0}^{\infty(\tau)}\xi^n E(z\xi)\frac{e(w \xi)}{w \xi}d\xi dw,$$
where $\tau=\tau(\omega)\in (-\arg(w)-\frac{\omega(m_{e})\pi}{2},-\arg(w)+\frac{\omega(m_{e})\pi}{2})$. The path $\Lambda_z$ depends on $z$.
\item[(b)] There exist $C_4,C_5,C_6>0$ such that 
\begin{equation}\label{e286}
 \|\partial_{m_e,z}^n u(z)\|_\EE\le C_4 C_5^n m_e(n)M^a_n \exp\left(M^b(C_6|z|)\right)\quad \textrm{ for all }n\in\N_0 \textrm{ and }z\in \tilde{S},
\end{equation}
 where $M^b(t)$ is the function defined in \eqref{eq:function_M}, corresponding to the strongly regular sequence $\mathbb{M}^b$.
\end{itemize}
\end{theo}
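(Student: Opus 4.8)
The plan is to adapt the integral representation already established in Theorem~3 of~\cite{LMS2} (recalled just above) so that it works uniformly on the union $\tilde S = S'\cup G'$ of a proper infinite subsector of $S_{d_1}$ and a proper sectorial subregion of $G$. First I would fix, for a given $z\in\tilde S$, a suitable integration path $\Lambda_z$: starting from the circle $\{|w|=r_1\}$ for a small $r_1>0$ (inside the disc where the $\M^a/$-Borel sum $u$ is defined and holomorphic), I replace the arc of that circle that points toward $z$ by a path that runs out into the infinite sector $S_{d_1}$, to a distance comparable to $|z|$, staying inside both $S_{d_1}$ (where the $\M^b$-exponential bound on $u$ holds) and the sectorial region $G$ (so that Cauchy's theorem applies and the representation is independent of the deformation). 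The existence of such a path, with controlled geometry, is exactly the kind of deformation argument in~\cite{LMS2}; the point here is that the path must simultaneously respect two sectors of different bisecting directions $d_1,d_2$, and the hypothesis $|d_1-d_2|<a\omega(\M)\pi/2$ together with $\theta>a\omega(\M)\pi$ guarantees there is enough angular room to do so. Part~(a) then follows by the same contour-and-kernel computation as in the cited theorem, expanding $E(z\xi)=\sum_p (z\xi)^p/m_e(p)$ and using the Cauchy integral formula term by term.

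For part~(b) I would split the inner $\xi$-integral and the outer $w$-integral according to which piece of $\Lambda_z$ we integrate over. On the circular part $\{|w|=r_1\}$, $u(w)$ is bounded, $|e(w\xi)/(w\xi)|$ decays like $\exp(-M(|w\xi|/K))$ by the flatness estimate \eqref{e162}, and $|\xi^n E(z\xi)|\le \tilde c\,|\xi|^n\exp(M(|z\xi|/\tilde k))$; choosing the ray direction $\tau$ optimally and using Lemma~\ref{lema_1} (with $M$ replaced by the function attached to $m_e$, which is equivalent to $\M$ by Lemma~\ref{lema_2}) to evaluate $\int_0^\infty |\xi|^{n}\exp(-cM(|\xi|))\,|d\xi|$, this part contributes a term bounded by $C\,D^n m_e(n)$, with no exponential growth in $|z|$ since $r_1$ is fixed. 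On the part of $\Lambda_z$ that goes out into $S_{d_1}$, we use $\|u(w)\|_\EE\le C\exp(M^b(|w|/K))$; here the path reaches distance $\sim|z|$, so this is the term producing the factor $\exp(M^b(C_6|z|))$. The remaining $\xi$-integral is estimated exactly as before, again by Lemma~\ref{lema_1}, yielding the factor $m_e(n)$. Combining the two contributions and then passing from $m_e(n)$-type bounds with extra geometric constants to $m_e(n)M_n^a$ — using the equivalence of $m_e$ with $\M$ and the moderate-growth/logarithmic-convexity inequalities of Lemma~\ref{lema2} to absorb the powers $D^n$ into $C_5^n m_e(n)M_n^a$ — gives \eqref{e286}.

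The main obstacle, and the step deserving the most care, is the construction of the path $\Lambda_z$ and the verification that its geometry can be controlled uniformly for $z$ ranging over the (unbounded, in $S'$) region $\tilde S$: one needs the ``radial reach'' of the path along $d_1$ to scale linearly with $|z|$ while the path stays a bounded angular distance inside $S_{d_1}\cap G$, and one needs the arc-length and the pointwise lower bounds $|w\xi|\ge c|z|\cdot|\xi|$ (or similar) on the relevant portions to be uniform. Once the path is pinned down, everything else is a matter of plugging the kernel estimates \eqref{e162}, \eqref{kernel2}, \eqref{e202} and the $\M^b$-growth of $u$ into Lemma~\ref{lema_1}; the bookkeeping to track the three separate dependences (on $m_e$ through $m_e(n)$, on $\M^a$ through $M_n^a$, and on $\M^b$ through $\exp(M^b(C_6|z|))$) is routine but must be done so that no spurious cross-terms appear. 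I would also note that part~(a) implies, via Definition~\ref{defi:derivative} and uniqueness, that this integral genuinely computes the iterated moment derivative of the sum, which is what makes the subsequent Definition~\ref{defi624} coherent.
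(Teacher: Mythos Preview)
Your plan has a genuine structural gap at the very first step: the contour cannot be a deformation of a circle $\{|w|=r_1\}$. The function $u=\mathcal{S}_{\mathbb{M}^a,d_2}(\hat u)$ is holomorphic only on the sectorial region $G=G_{d_2}(\theta)$ (together with the infinite sector $S_{d_1}$), and a sectorial region never contains a full punctured disc around the origin. This is precisely what distinguishes the present situation from Theorem~3 of~\cite{LMS2}, where the function lived on $\hat S_d(\theta;r)=S_d(\theta)\cup D(0,r)$ and a circle was available. Here the only way to close the contour near the origin is to run two radial segments $\Lambda_1,\Lambda_7$ from $0$ out to radius $r_1$ along the two edge directions $d_2\pm\theta/2$ of $G$, and it is exactly the estimation along these segments that carries the new content of the proof.

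On $\Lambda_1$ and $\Lambda_7$ the inner kernel integral does not stay uniformly bounded as $w\to 0$ (after the change of variable it picks up a factor of order $|w|^{-n}$), so you cannot simply bound $\|u(w)\|_{\mathbb E}$ and integrate. The paper's device is to write $v(z)=z^{-n}\bigl(u(z)-\sum_{p<n}u_pz^p\bigr)$ and use $\partial_{m_e,z}^n u=\partial_{m_e,z}^n(z^n v)$, so that the integrand becomes $v(w)\,w^n$ times the kernel; then the $\mathbb M^a$-asymptotic expansion of $u$ gives $\|v(w)\|_{\mathbb E}\le \tilde C\tilde A^n M_n^a$ uniformly near $0$ (Lemma~\ref{lema3} in the proof), and this is the genuine source of the factor $M_n^a$ in~\eqref{e286}. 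Your proposal to obtain $M_n^a$ at the end by ``equivalence of $m_e$ with $\mathbb M$'' cannot work: $m_e$ is an arbitrary moment sequence here, unrelated to $\mathbb M$, so Lemma~\ref{lema_2} does not connect them. The remaining pieces of the contour ($\Lambda_2,\ldots,\Lambda_6$) are handled essentially as you describe, but the radial-to-origin segments and the asymptotic-remainder trick are the heart of the argument and are missing from your outline.
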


\begin{figure}
\includegraphics{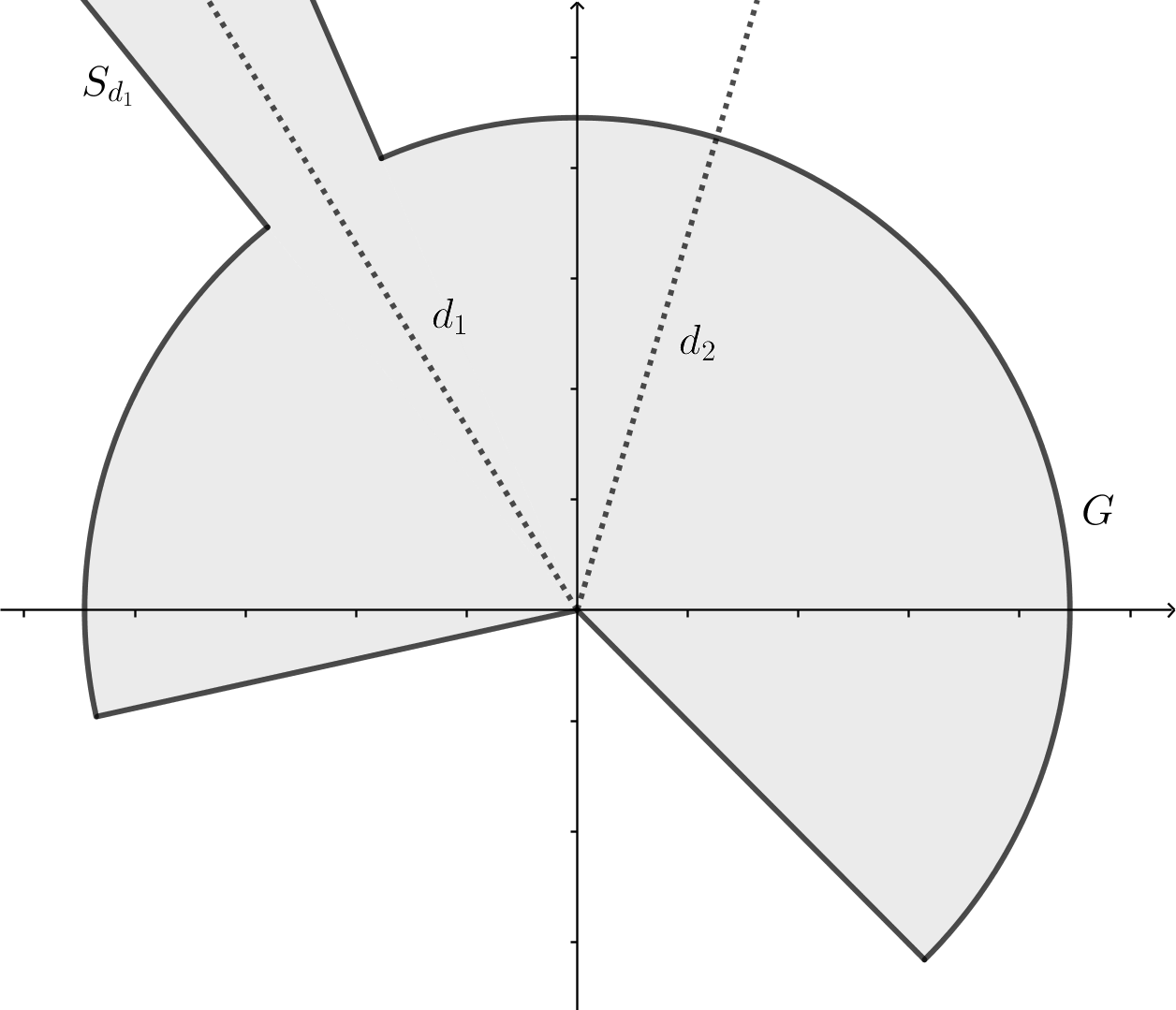}\caption{Example of configuration of the sets $S_{d_1}$ and $G$}\label{fig2}
\end{figure}

\begin{proof}
The proof of the previous result is based on that of Theorem 3 and Proposition 2,~\cite{LMS2}. We provide a complete proof for the sake of completeness and focus on distinctive points with respect to the results in that previous work.

Let $S'\prec S_{d_1}$ and $G''\prec G$. Let $r_1>0$ and $\theta>\theta'>0$ such that $S_{d_2}(\theta;2r_1)\subseteq G$ and $S'\cup G''\subset S_{d_2}(\theta')$. We take $\tilde{r}:=\frac{\tilde{k}r_1}{K\rho(2)}$, where $\tilde{k}$ is given in (\ref{kernel2}), $K$ is defined in (\ref{e162}), and $\rho(\cdot)$ is as in Lemma~\ref{lema0}.
Let $G':=G''\cap D(0,\tilde{r})$ and $\tilde{S}:=S'\cup G'$. Choose $z\in \tilde{S}$. The path $\Lambda_z$ is constructed as follows.
 Let $r_1e^{i\theta_1},r_1e^{i\theta_2}$ be the points in $\{z\in\C:|z|=r_1\}\cap S_d$ and  let $\tilde{P}=r_1e^{i\tilde{\theta}_1},\tilde{Q}=r_1e^{i\tilde{\theta}_2}$ be the points in $\{z\in\C:|z|=r_1\}\cap S'$. We assume $\theta_1<\tilde{\theta}_1<\tilde{\theta}_2<\theta_2$. We define $\Lambda_1:=[0,r_1]e^{i(d_2-\theta/2)}$, $\Lambda_2$ is the arc of the circle of radius $r_1$ from $r_1e^{i(d_2-\theta/2)}$ to $r_1e^{i(\theta_1+\tilde{\theta}_1)/2}$. We also put $\Lambda_3:=[r_1e^{i(\theta_1+\tilde{\theta}_1)/2},Re^{i(\theta_1+\tilde{\theta}_1)/2}]$, with $R=R(z)>0$ to be determined. $\Lambda_4$ is the arc of circle from $Re^{i(\theta_1+\tilde{\theta}_1)/2}$ to $Re^{i(\theta_2+\tilde{\theta}_2)/2}$, $\Lambda_5:=[r_1e^{i(\theta_2+\tilde{\theta}_2)/2},Re^{i(\theta_2+\tilde{\theta}_2)/2}]$, $\Lambda_6$ is the arc of the circle of radius $r_1$ from $r_1e^{i(\theta_2+\tilde{\theta}_2)/2}$ to $r_1e^{i(d_2+\theta/2)}$ and $\Lambda_7:=[0,r_1]e^{i(d_2+\theta/2)}$. We finally define the integration path
$$\Lambda_z:=\Lambda_1+\Lambda_2+\Lambda_3+\Lambda_4-\Lambda_5+\Lambda_6-\Lambda_7,$$
see Figure~\ref{fig3}. In case that $|z|< \tilde{r}=\frac{\tilde{k}r_1}{K\rho(2)}$, where $\tilde{k}$ is given in (\ref{kernel2}), $K$ is defined in (\ref{e162}), and $\rho(\cdot)$ is as in Lemma~\ref{lema0}, then one can choose $\tilde{P}=\tilde{Q}$ and remove $\Lambda_3$, $\Lambda_4$ and $\Lambda_5$ from the concatenation. Otherwise, $R:=\frac{\rho(2)K}{\tilde{k}}|z|$.

\begin{figure}
\includegraphics{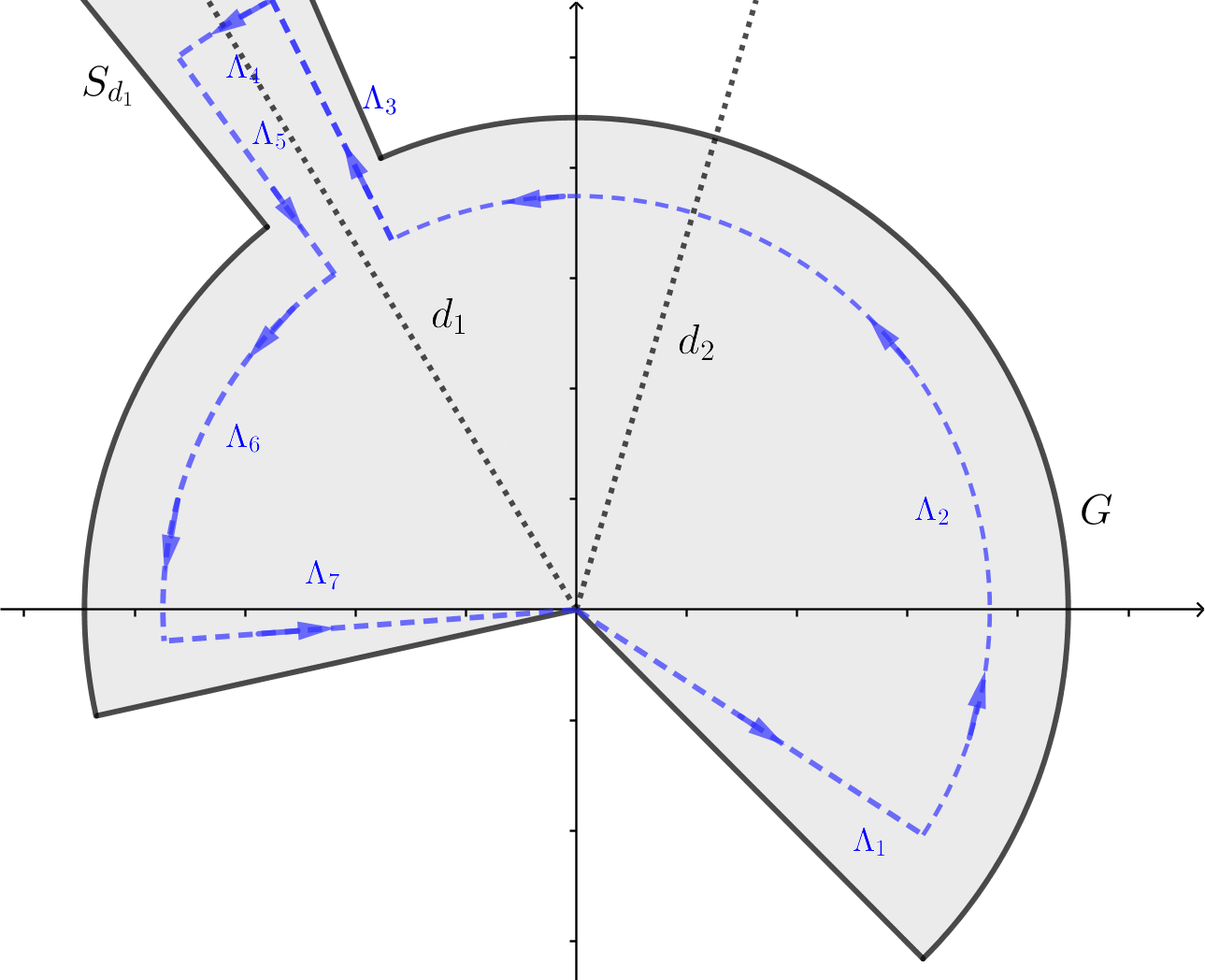}\caption{Integration path $\Lambda_z$}\label{fig3}
\end{figure}

For the first part of the proof, we observe from (35) in~\cite{sanzproceedings} that
$$\int_{0}^{\infty(\tau)}E(z\xi)\frac{e(w \xi)}{w\xi}d\xi=\frac{1}{w-z},$$ 
for every pair of complex numbers $(z,w)$ in which both sides of the previous expression are defined. Hence, if $u\in\mathcal{O}(G\cup S_{d_1})$ then one may replace the contour $\Gamma_z$ in (\ref{e268}) by $\Lambda_z$.

We now provide the estimates in (\ref{e286}). We first observe (see (17) in~\cite{LMS2}) that 
\begin{equation}\label{e423}
\left|\int_{0}^{\infty(\tau)}\xi^nE(z\xi)\frac{e(\omega \xi)}{\omega \xi}d\xi\right|\le A_0 B_0^n m_e(n),\qquad n\ge0,
\end{equation}
valid for all $z\in \C$ with $|z|\le \tilde{r}$ and $\tau\in\left(-\arg(\omega)-\frac{\omega(m_e)\pi}{2},-\arg(\omega)+\frac{\omega(m_e)\pi}{2}\right)$, for $\omega\in\C$ with $|\omega|=r_1$. This entails there exist $A_{11},B_{11}>0$ such that
$$
\left\|\frac{1}{2\pi i}\int_{\Lambda_j}u(\omega)\int_0^{\infty(\tau)}\xi^n E(z\xi)\frac{e(\omega \xi)}{\omega \xi}d\xi d\omega\right\|_{\mathbb{E}}\le \left(\sup_{|\omega|=r_1,\omega\in S_{d_2}(\theta;2r_1)}\left\|u(\omega)\right\|\right)A_{11}B_{11}^nm_e(n),
$$
for $j=2,6$, valid for every $n\in\N_0$. We recall that in this case $\Lambda_3,\Lambda_4$ and $\Lambda_5$ do not appear in the integration path. It only remains to give upper bounds regarding the paths $\Lambda_1$ and $\Lambda_7$, which are postponed.

If $|z|>\tilde{r}$, for every $\omega\in\Lambda_2\cup\Lambda_6$ one can choose $\tau$ with 
$$
\tau\in\left(-\arg(\omega)-\frac{\omega(m_e)\pi}{2},-\arg(\omega)+\frac{\omega(m_e)\pi}{2}\right)\cap \left(-\arg(z)+\frac{\omega(m_e)\pi}{2},-\arg(z)+2\pi-\frac{\omega(m_e)\pi}{2}\right).$$
This entails (see (22)--(26) in the proof of Theorem 3~\cite{LMS2}) the existence of $A_{12},B_{12}>0$ with
$$
\left\|\frac{1}{2\pi i}\int_{\Lambda_j}u(\omega)\int_0^{\infty(\tau)}\xi^n E(z\xi)\frac{e(\omega \xi)}{\omega \xi}d\xi d\omega\right\|_{\mathbb{E}}\le \left(\sup_{|\omega|=r_1,\omega\in S_{d_2}(\theta;2r_1)}\left\|u(\omega)\right\|\right)A_{12}B_{12}^nm_e(n),
$$
for $j=2,6$, and all $n\in\N_0$. The previous expression can be bounded from above by
$$A_{12}B_{12}^nm_e(n)\exp(M^b(C_{12}|z|))$$ for some $C_{12}>0$ taking into account the growth at infinity determined by $v$ and the choice of the integration paths. On the other hand, parametrizing the integration path $\Lambda_{3}$ and analogous estimates as in the previous expression yield
$$
\left\|\frac{1}{2\pi i}\int_{\Lambda_3}u(\omega)\int_0^{\infty(\tau)}\xi^n E(z\xi)\frac{e(\omega \xi)}{\omega \xi}d\xi d\omega\right\|_{\mathbb{E}}\le A_{13}B_{13}^nm_e(n)\exp(M^b(C_{13}|z|)),
$$
for some $A_{13},B_{13},C_{13}>0$ (see (28) in~\cite{LMS2}). These same upper bounds hold for the integration along $\Lambda_5$. Again, the parametrization of $\Lambda_{4}$ and usual estimates yield 
$$
\left\|\frac{1}{2\pi i}\int_{\Lambda_4}u(\omega)\int_0^{\infty(\tau)}\xi^n E(z\xi)\frac{e(\omega \xi)}{\omega \xi}d\xi d\omega\right\|_{\mathbb{E}}\le A_{14}B_{14}^nm_e(n)\exp(M^b(C_{14}|z|)),
$$
for some $A_{14},B_{14},C_{14}>0$ (see (31) in~\cite{LMS2}).

At this point, it only remains to provide upper bounds for 
\begin{equation}\label{e345}
\left\|\frac{1}{2\pi i}\int_{\Lambda_j}u(\omega)\int_0^{\infty(\tau)}\xi^n E(z\xi)\frac{e(\omega \xi)}{\omega \xi}d\xi d\omega\right\|_{\mathbb{E}}
\end{equation}
for $j=1,7$ for all $z\in \tilde{S}$. Let $n\in\N_0$. We write $\hat{u}(z)=\sum_{p\ge0}u_pz^{p}$ and define 
$$v(z):=\frac{1}{z^n}\left(u(z)-\sum_{p=0}^{n-1}u_pz^p\right).$$

\begin{lemma}\label{lema3}
The function $v$ belongs to $\mathcal{O}^{\mathbb{M}^b}(S_{d_1},\mathbb{E})$. In addition to this, for every $S'\prec S_{d_1}$ there exist $\tilde{C},\tilde{B},\tilde{A}>0$, which do not depend on $n$, such that
$$\left\|v(z)\right\|_{\mathbb{E}}\le \tilde{C}\tilde{A}^nM_n^a\exp\left(M^b\left(\frac{|z|}{\tilde{B}}\right)\right),$$
for all $z\in S'$.
\end{lemma}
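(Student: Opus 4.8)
The plan is to prove Lemma~\ref{lema3} by combining the $\mathbb{M}^a$-asymptotic expansion of $u$ on the sectorial region $G$ with its $\mathbb{M}^b$-exponential growth on the infinite sector $S_{d_1}$, treating separately the bounded part near the origin and the unbounded part away from it. First I would recall that, since $\hat u\in\mathbb{E}\{z\}_{\mathbb{M}^a,d_2}$ with $u=\mathcal{S}_{\mathbb{M}^a,d_2}(\hat u)$, the function $u$ admits $\hat u$ as its $\mathbb{M}^a$-asymptotic expansion on $G$; hence for each $S'\prec S_{d_1}$ there is $r_2>0$ (with $S_{d_1}(\cdot;r_2)$, or rather its trace on the common sectorial region, lying in $G$) and constants $C,A>0$ with $\|u(z)-\sum_{p=0}^{n-1}u_pz^p\|_{\mathbb{E}}\le CA^nM_n^a|z|^n$ for all $z$ in that bounded sector and all $n$. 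Dividing by $|z|^n$ gives $\|v(z)\|_{\mathbb{E}}\le CA^nM_n^a$ there, which is already of the desired form since $M^b(|z|/\tilde B)\ge0$. This handles $z$ in a bounded subsector of $S_{d_1}$ intersected with a disc, and in particular the part of $S'$ of bounded modulus.

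Next I would treat $z\in S'$ with $|z|$ large. Here one uses $u\in\mathcal{O}^{\mathbb{M}^b}(S_{d_1},\mathbb{E})$ directly: $\|u(z)\|_{\mathbb{E}}\le C'\exp(M^b(|z|/K'))$ on $S'$ for suitable $C',K'>0$. By Lemma~\ref{lema140} applied on $G$, the coefficients satisfy $\|u_p\|_{\mathbb{E}}\le \tilde C\tilde A^pM_p^a$, so $\|\sum_{p=0}^{n-1}u_pz^p\|_{\mathbb{E}}\le \tilde C\sum_{p=0}^{n-1}\tilde A^pM_p^a|z|^p$. For $|z|$ bounded below (say $|z|\ge\delta>0$), each term $\tilde A^pM_p^a|z|^p\le \tilde A^pM_p^a|z|^n/\delta^{n-p}$ using $|z|^p\le|z|^{n}\delta^{p-n}$ when $|z|\ge\delta$; summing over $p$ yields $\|\sum_{p=0}^{n-1}u_pz^p\|_{\mathbb{E}}\le \tilde C'(\tilde A')^nM_{n}^a|z|^n$ after using property $(mg)$ for $\mathbb{M}^a$ (Lemma~\ref{lema1} guarantees $\mathbb{M}^a$ is strongly regular) and logarithmic convexity (Lemma~\ref{lema2}, $M_p^aM_{n-p}^a\le M_n^a$) to absorb the constants; here a cleaner route is to bound $\|v(z)\|_{\mathbb{E}}=|z|^{-n}\|u(z)-\sum_{p=0}^{n-1}u_pz^p\|_{\mathbb{E}}\le \delta^{-n}(\|u(z)\|_{\mathbb{E}}+\tilde C\sum_{p\ge0}\tilde A^pM_p^a|z|^p)$ but this diverges, so instead one keeps the polynomial bound term by term. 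Combining the two regimes via a common pair of constants $\tilde C,\tilde A,\tilde B$ (independent of $n$, since all the above constants are) gives the claim on all of $S'$.

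A slightly slicker organization, which I would actually adopt, avoids splitting $S'$ by modulus: on the full $S'$ one has both $\|u(z)\|_{\mathbb{E}}\le C'\exp(M^b(|z|/K'))$ and, for $|z|\le r_2$, the sharper $\|v(z)\|_{\mathbb{E}}\le CA^nM_n^a$; for $|z|>r_2$ one estimates $\|v(z)\|_{\mathbb{E}}\le r_2^{-n}\bigl(\|u(z)\|_{\mathbb{E}}+\tilde C\sum_{p=0}^{n-1}\tilde A^pM_p^a r_2^{\,p}(|z|/r_2)^{p}\bigr)$ --- no, again the polynomial grows. The correct slick version: for $|z|>r_2$, write $|z|^{-n}|z|^p=|z|^{p-n}<r_2^{\,p-n}$ since $p-n<0$, so $\|v(z)\|_{\mathbb{E}}\le r_2^{-n}\|u(z)\|_{\mathbb{E}}+\tilde C\sum_{p=0}^{n-1}\tilde A^pM_p^a r_2^{\,p-n}$, and then $M_p^a r_2^{\,p-n}\le M_p^a M_{n-p}^a (\tilde A'')^{n}\le M_n^a(\tilde A'')^n$ by choosing $\tilde A''$ dominating $\tilde A$ and $r_2^{-1}$ and invoking Lemma~\ref{lema2}; this yields $\|v(z)\|_{\mathbb{E}}\le \tilde C_1(\tilde A_1)^nM_n^a\exp(M^b(|z|/K'))$, uniformly in $n$.

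The fact that $v\in\mathcal{O}^{\mathbb{M}^b}(S_{d_1},\mathbb{E})$ follows at once: $v$ is holomorphic on $S_{d_1}$ away from $0$ because $u$ is, and the removable singularity at $z=0$ (within the part of $S_{d_1}$ contained in $G$) is handled by the asymptotic expansion, which forces the first $n$ Taylor coefficients of $u$ to vanish after subtraction; the $\mathbb{M}^b$-growth bound for fixed $n$ is a special case of the uniform estimate just derived. The main obstacle I anticipate is purely bookkeeping: making sure the constants $\tilde C,\tilde A,\tilde B$ genuinely do not depend on $n$, which hinges on using property $(mg)$ and logarithmic convexity of $\mathbb{M}^a$ (valid by Lemmas~\ref{lema1} and~\ref{lema2}) to turn products like $M_p^aM_{n-p}^a$ or ratios $M_n^a/M_p^a$ into harmless geometric factors absorbed into $\tilde A^n$, and to reconcile the $|z|\le r_2$ and $|z|>r_2$ estimates into a single inequality valid on all of $S'$.
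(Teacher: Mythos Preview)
Your approach is essentially the same as the paper's: split $S'$ into $|z|\le K_1$ (where the $\mathbb{M}^a$-asymptotic expansion of $u$ on $G$ gives $\|v(z)\|_{\mathbb{E}}\le \tilde C_1\tilde A_1^n M_n^a$ directly) and $|z|\ge K_1$ (where one bounds $|z|^{-n}\|u(z)\|_{\mathbb{E}}\le K_1^{-n}C\exp(M^b(|z|/K))$ and, using Lemma~\ref{lema140} together with $|z|^{p-n}\le K_1^{\,p-n}$, bounds $\|u_p z^{p-n}\|_{\mathbb{E}}\le \tilde C_2\tilde A_2^n M_n^a$). The paper writes this out in four lines without the exploratory detours; your ``correct slick version'' in the third paragraph is exactly the paper's argument, and the invocation of Lemma~\ref{lema2} to pass from $M_p^a$ to $M_n^a$ (via $M_p^a M_{n-p}^a\le M_n^a$) is the same mechanism that makes the paper's inequality $\frac{1}{K_1^{n-p}}\tilde C_1\tilde A_1^p M_p^a\le \tilde C_2\tilde A_2^n M_n^a$ work.
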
 
\begin{proof}
It is clear that $v\in\mathcal{O}^{\mathbb{M}^b}(S_{d_1},\mathbb{E})$. We have that for all $S'\prec S_{d_1}$ one has that (\ref{e152}) holds for some $C,K>0$.

Bearing in mind that $u$ admits $\hat{u}$ as its asymptotic expansion, there exist $K_1,\tilde{C}_1,\tilde{A}_1>0$ such that 
\begin{equation}\label{e364}
\left\|v(z)\right\|_{\mathbb{E}}\le \tilde{C}_1\tilde{A}_1^nM^a_n,
\end{equation}
for all $z\in S'$ with $|z|\le K_1$, provided that the opening of $S_{d_1}$ is small enough.
On the other hand, for every $z\in S'$ with $|z|\ge K_1$ and all $0\le p\le n-1$, one can apply Lemma~\ref{lema140} and usual estimates to arrive at
$$\left\|u_pz^{p-n}\right\|_{\mathbb{E}}\le\frac{1}{K_1^{n-p}}\tilde{C}_1\tilde{A}_1^pM_p^a\le \tilde{C}_2\tilde{A}_2^nM_n^a.$$
Also,
\begin{equation}\label{e381}
\frac{\left\|u(z)\right\|_{\mathbb{E}}}{|z|^{n}}\le \frac{1}{K_1^n}C\exp(M^b(|z|/K))
\end{equation}
for all $z\in S'$ with $|z|\ge K_1$.
The result follows from here.
\end{proof}

Observe that for every $n\in\N_0$ one has that
\begin{equation}\label{e389}
\partial_{m_e,z}^{n}u(z)=\partial_{m_e,z}^{n}\left(u(z)-\sum_{p=0}^{n-1}u_pz^p\right)=\partial_{m_e,z}^{n}(z^nv(z)).
\end{equation}

In view of (\ref{e389}) one may substitute the study of upper estimates of (\ref{e345}) by those of
\begin{equation}\label{e345b}
I_j:=\left\|\frac{1}{2\pi i}\int_{\Lambda_j}v(\omega)\int_0^{\infty(\tau)}\xi^n\omega^n E(z\xi)\frac{e(\omega \xi)}{\omega \xi}d\xi d\omega\right\|_{\mathbb{E}}.
\end{equation}
We provide upper bounds for (\ref{e345b}) for $j=1$, which remain valid for the case $j=7$.


In view of Lemma~\ref{lema3}, one derives
\begin{equation}\label{e411}
\left\|v(\omega)\right\|_{\mathbb{E}}\le \tilde{C}_1\tilde{A}_1^nM_n^{a}\exp\left(M^b\left(\frac{|\omega|}{\tilde{B}}\right)\right)\le \tilde{C}_1\exp\left(M^b\left(\frac{r_1}{\tilde{B}}\right)\right)\tilde{A}_1^nM_n^{a}.
\end{equation}

By Proposition 2, \cite{LMS2} there exists $\varepsilon > 0$, such that the estimation (\ref{e286}) holds for $z\in \tilde{S}\cap D(0,\varepsilon)$. Hence, we may assume that $|z|\geq\varepsilon$.
Let $M_E$ be a positive constant given in Definition \ref{defin:kernel-functions}.

We split the path of integration in the inner integral in (\ref{e345b}) into the segment
$[0, e^{i\tau}M_E/|z| ]$ and the ray $[e^{i\tau}M_E/|z| ,\infty(\tau)]$,
and we interchange the order of integration in (\ref{e345b}). Next, we observe that $\hbox{arg}(\omega)=d_2-\theta/2$ and $\hbox{arg}(\xi)=\tau\in(-\hbox{arg}(\omega)-\frac{\pi \omega(m_e)}{2},-\hbox{arg}(\omega)+\frac{\pi \omega(m_e)}{2})$. Hence
\begin{multline}\label{eq:I_1}
 I_1\leq \frac{1}{2\pi}\sup_{\omega\in \Lambda_1}\|v(\omega)\|_{\mathbb{E}}
 \bigg(\int_0^{M_E/|z|}|E(z se^{i\tau})|\int_{0}^{r_1}s^nw^n |e(wse^{i\tilde{\theta}})|\frac{dw}{w} \frac{ds}{s}\\ 
  + \int_{M_E/|z|}^{\infty}|E(z se^{i\tau})|\int_{0}^{r_1}s^nw^n |e(wse^{i\tilde{\theta}})|\frac{dw}{w} \frac{ds}{s}\bigg)=:\frac{1}{2\pi}\sup_{\omega\in \Lambda_1}\|v(\omega)\|_{\mathbb{E}}(I_{11}+I_{12}),
\end{multline}
for $\tilde{\theta}:=d_2-\theta/2+\tau\in(-\pi\omega(m_e)/2,\pi\omega(m_e)/2)$.

Observe that
\begin{equation}\label{e414}
\int_{0}^{r_1}s^nw^n|e(wse^{i\tilde{\theta}})|\frac{dw}{w}\le  
\int_{0}^{sr_1}t^{n-1}|e(te^{i\tilde{\theta}})|dt.
\end{equation}

If $s<M_E/|z|$ then using (\ref{eq:est_e})  we continue the estimation (\ref{e414}) by
\begin{equation}\label{eq:est_1_case}
\int_{0}^{r_1s}t^{n-1}|e(te^{i\tilde{\theta}})|dt\le C \int_{0}^{r_1s}t^{n-1}t^{\alpha} dt\le\frac{C r_1^{n+\alpha}}{n+\alpha}s^{n+\alpha}\leq\tilde{C}_2\tilde{A}_2^ns^{n+\alpha},
\end{equation}
for some $\tilde{C}_2,\tilde{A}_2>0$ and $\alpha>0$.

By (\ref{eq:est_1_case}) we get
\begin{equation}\label{eq:I_11}
 I_{11}\leq \sup_{|\zeta|\le M_E}|E(\zeta)|\int_0^{M_E/\varepsilon}\tilde{C}_2\tilde{A}_2^ns^{n+\alpha-1}ds\le \tilde{C}_3\tilde{A}_3^n,
\end{equation}
for some $\tilde{C}_3,\tilde{A}_3>0$.

On the opposite case $s\geq M_E/|z|$,
using Lemma~\ref{lema_1} and Lemma~\ref{lema_2} and (\ref{e162}) we estimate (\ref{e414}) by
\begin{equation}\label{eq:est_2_case}
\int_{0}^{r_1s}t^{n-1}|e(te^{i\tilde{\theta}})|dt \le \tilde{C}_4\tilde{A}_4^nm_e(n),
\end{equation}
for some constants $\tilde{C}_4,\tilde{A}_4>0$.

Since $\theta'<\theta$ and $\hbox{arg}(z)\in (d_2-\theta'/2, d_2+\theta'/2)$, there exists $\delta>0$
such that for every $z\in \tilde{S}$ there exists $\tau\in(-d_2+\theta/2-\pi\omega(m_e)/2,-d_2+\theta/2+\pi\omega(m_e)/2)$ satisfying
$\hbox{arg}(z)+\tau \not\in (-\pi\omega(m_e)/2-\delta,\pi\omega(m_e)/2+\delta)$. Hence, using (\ref{e202}) and (\ref{eq:est_2_case}) we estimate
\begin{equation}\label{eq:I_12}
 I_{12}\leq \tilde{C}_4\tilde{A}_4^nm_e(n)\frac{\tilde{c}_2}{|z|^{\beta}}\int_{M_e/|z|}^{\infty}\frac{1}{s^{\beta+1}}ds\le \tilde{C}_5\tilde{A}_5^n m_e(n),
 \end{equation}
for some $\tilde{C}_5,\tilde{A}_5 >0$ and $\beta>0$.

Taking into account (\ref{e411}), (\ref{eq:I_1}), (\ref{eq:I_11}) and (\ref{eq:I_12}), upper estimates as above yield 
$$I_1\le \tilde{C}_6\tilde{C}_7^nm_e(n)M_n^a\exp\left(M^b(\tilde{C}_8|z|)\right),$$
for some $\tilde{C}_j>0$, $6\le j\le 8$.

This concludes the proof of (\ref{e286}).
\end{proof}

In the last part of this section, we describe compatibility conditions regarding asymptotic expansions and moment derivation, which allows to provide a differential structure to the set of multisummable formal power series.

\begin{lemma}\label{lema484}
In the situation of Theorem~\ref{teo2}, one has that $\partial_{m_e,z}u(z)$ is the $\mathbb{M}^a$-sum of $\partial_{m_e,z}\hat{u}(z)$ in $G$.
\end{lemma}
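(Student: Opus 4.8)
The plan is to show that the locally defined holomorphic function $\partial_{m_e,z}u(z)$, which by Theorem~\ref{teo2}(a) is well defined on a set $\tilde S = S'\cup G'$ for $G'\prec G\cap D(0,\tilde r)$, coincides on $G'$ with the $\M^a$-sum along $d_2$ of the formal power series $\partial_{m_e,z}\hat u(z)$. First I would note that $\partial_{m_e,z}$ acts on $\C[[z]]$ simply by shifting coefficients and dividing by the appropriate moment ratio: if $\hat u(z)=\sum_{p\ge0}u_pz^p$, then $\partial_{m_e,z}\hat u(z)=\sum_{p\ge0}\tfrac{m_e(p)}{m_e(p+1)}u_{p+1}z^p$. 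Since $\M$ and $m_e$ are equivalent (Lemma~\ref{lema_2} together with the convention that $\M$ admits kernels and $m_e$ is the associated moment sequence), the sequence of ratios $m_e(p)/m_e(p+1)$ is bounded geometrically, so $\partial_{m_e,z}\hat u$ has the same type of coefficient growth as $\hat u$; in particular it is a legitimate candidate for $\M^a$-summability along $d_2$.

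Next I would establish that $\partial_{m_e,z}u$ admits $\partial_{m_e,z}\hat u$ as its $\M^a$-asymptotic expansion on the sectorial region $G$. The cleanest route is to invoke the first part of Theorem~3 of~\cite{LMS2} (quoted in the excerpt, the version with the contour $\Gamma_z$), or equivalently Theorem~\ref{teo2}(a) restricted to a small disc sector around the origin, which represents $\partial_{m_e,z}u(z)$ as a contour integral of $u$ against the kernel combination. Since $u$ admits $\hat u$ as its $\M^a$-asymptotic expansion in $G$, one can subtract the partial sum $\sum_{p=0}^{N-1}u_pz^p$ inside the integral, use the remainder bound $\|u(w)-\sum_{p=0}^{N-1}u_pw^p\|_\EE\le CA^NM_N^a|w|^N$ on the arc of $\Gamma_z$ that lies in $G$, and deform the remaining arc to pick up only the contribution that produces the correct partial sum $\sum_{p=0}^{N-1}\tfrac{m_e(p)}{m_e(p+1)}u_{p+1}z^p$ of $\partial_{m_e,z}\hat u$; this is exactly the argument used to prove the compatibility of moment differentiation with $\M^a$-asymptotics in~\cite{LMS2} for $G$ a disc sector, and nothing changes when $G$ is a general sectorial region of opening $>a\pi\omega(\M)$.

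Finally I would combine the two observations with the uniqueness part of the $\M^a$-summability theory. Theorem~\ref{teo2}(b) (with $b$ replaced by any value, or simply the bound with $\M^a$ if one wants the pure statement on $G$) guarantees that $\partial_{m_e,z}u$ has at worst the required exponential-type growth, but in fact on the bounded region $G'$ it is genuinely holomorphic and, by the previous paragraph, it has $\partial_{m_e,z}\hat u$ as its $\M^a$-asymptotic expansion on a sector of opening $>a\pi\omega(\M)=\omega(\M^a)\pi$ (using Lemma~\ref{lema1}). By Watson's Lemma in the strongly-regular-sequence setting (Corollary~3.16 of~\cite{jss}, invoked after Definition~\ref{def198}), there is a \emph{unique} holomorphic function on such a sector with that asymptotic expansion, and that function is by definition $\mathcal S_{\M^a,d_2}(\partial_{m_e,z}\hat u)$. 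Hence $\partial_{m_e,z}u = \mathcal S_{\M^a,d_2}(\partial_{m_e,z}\hat u)$ on $G$, as claimed.

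The main obstacle I anticipate is the bookkeeping in the contour-deformation step: one must check that the deformation of the arc of $\Gamma_z$ (or $\Lambda_z$) away from the small-disc regime does not introduce extra terms and that the estimate on the deformed arc still decays fast enough in $N$, using the flatness of $e$ from~\eqref{e162} and the moment bound from Lemma~\ref{lema_1}; this is precisely the technical heart of Theorem~3 in~\cite{LMS2}, so I would lean on that argument rather than redo it, and only indicate why the generalization from a disc sector to an arbitrary sectorial region $G$ of the required opening is harmless.
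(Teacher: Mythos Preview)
Your approach is valid but differs from the paper's. One small correction first: in the setting of Theorem~\ref{teo2}, $m_e$ is \emph{not} assumed to be the moment sequence attached to $\M$; it is an arbitrary moment sequence associated with some kernel $e$ for some (possibly different) strongly regular sequence. So your invocation of Lemma~\ref{lema_2} to say ``$\M$ and $m_e$ are equivalent'' is misplaced. The conclusion you need (that $m_e(p)/m_e(p+1)$ is geometrically bounded) still holds, because $m_e$ is equivalent to its own underlying strongly regular sequence and then (lc) applies; but the justification should be rerouted accordingly.

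As for strategy, you propose to estimate the remainder $\partial_{m_e,z}u(z)-\sum_{p<N}(\partial_{m_e,z}\hat u)_p z^p$ directly, by inserting the $\M^a$-asymptotic of $u$ into the contour integral of Theorem~\ref{teo2}(a) and bounding the resulting integral, in the spirit of Theorem~3 and Corollary~1 of~\cite{LMS2}. The paper instead avoids remainder estimates altogether: it exploits the explicit Taylor structure of the kernel, observing that $\lim_{z\to 0}\partial_{m_e,z}^p E(z\xi)=\xi^p/m_e(0)=\frac{m_e(p)}{m_e(0)p!}\lim_{z\to 0}\partial_z^p E(z\xi)$, feeds this into the integral representation~(\ref{e268}), and deduces $\lim_{z\to 0,\,z\in G}\partial_{m_e,z}^p u(z)=\frac{m_e(p)}{m_e(0)}u_p=\partial_{m_e,z}^p\hat u(0)$ for every $p$. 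It then invokes the characterization of $\M^a$-asymptotic expansions via the existence of these limits (combined, implicitly, with the uniform bounds already provided by Theorem~\ref{teo2}(b)). Your route is more hands-on and self-contained but requires redoing the contour bookkeeping you flag as the ``main obstacle''; the paper's route is shorter and sidesteps that bookkeeping, at the price of relying on the limit characterization of asymptotic expansions and on the structure of $E$. Both finish the same way, identifying the resulting function as the $\M^a$-sum on a sector of opening larger than $\omega(\M^a)\pi$.
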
 
\begin{proof}
Observe that $E_{m_e}(z\xi)=\sum_{n\ge0}\frac{(z\xi)^n}{m_e(n)}$. Therefore, for every $p\ge0$ one has
$$\lim_{z\to 0}\partial_{m_e,z}^{p}E_{m_e}(z\xi)=\frac{\xi^p}{m_{e}(0)}.$$
Analogously, 
$$\lim_{z\to 0}\partial_{z}^{p}E_{m_e}(z\xi)=\frac{p!}{m_e(p)}\xi^p,$$
which yields
$$\lim_{z\to 0}\partial_{m_e,z}^{p}E_{m_e}(z\xi)=\frac{m_e(p)}{m_e(0)p!}\lim_{z\to 0}\partial_{z}^{p}E_{m_e}(z\xi).$$
In view of (\ref{e268}), this means that given $u$ which admits $\hat{u}(z)=\sum_{n\ge0}u_nz^n$ as its asymptotic expansion at the origin in $G$, then
\begin{align*}
\lim_{z\to 0,z\in G}\partial_{m_e,z}^{p}u(z)&=\frac{m_e(p)}{m_e(0)p!}\lim_{z\to 0,z\in G}\partial_{z}^{p}u(z)\\
&=\frac{m_e(p)}{m_e(0)p!}\partial_{z}^{p}\hat{u}(z)\left.\right|_{z=0}=\frac{m_e(p)}{m_e(0)p!}p!u_p=\frac{m_e(p)}{m_e(0)}u_p.
\end{align*}
On the other hand,
$$\partial_{m_e,z}^{p}\hat{u}(z)=\sum_{n\ge0}\frac{u_{n+p}m_e(n+p)}{m_e(n)}z^n,$$
which entails that
$$\partial_{m_e,z}^{p}\hat{u}(z)\left.\right|_{z=0}=u_p\frac{m_e(p)}{m_e(0)}.$$
We conclude that
$$\lim_{z\to 0,z\in G}\partial_{m_e,z}^{p}u(z)=\partial_{m_e,z}^{p}\hat{u}(z)\left.\right|_{z=0}.$$
This is an equivalent condition for $\partial_{m_e,z}u(z)$ to admit $\partial_{m_e,z}\hat{u}(z)$ as its asymptotic expansion in $G$.
\end{proof}

\begin{corol}\label{coro519}
Let $m_e=(m_e(p))_{p\ge0}$ be a sequence of moments, and let $\mathbb{M}$ be a strongly regular sequence which admits a nonzero proximate order. We also choose positive numbers $a,b$ such that $\omega(\mathbb{M})<2/(a+b)$. Given any multidirection $(d_1,d_2)\in\R^2$ such that $|d_1-d_2|<a\frac{\omega(\mathbb{M})\pi}{2}$, the space $\mathbb{E}\{z\}_{(\mathbb{M}^{b},\mathbb{M}^{a+b}),(d_1,d_2)}$ is closed under $m_e$-differentiation.
\end{corol}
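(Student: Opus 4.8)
The plan is to take an arbitrary $\hat{u}\in\mathbb{E}\{z\}_{(\mathbb{M}^b,\mathbb{M}^{a+b}),(d_1,d_2)}$ and show $\partial_{m_e,z}\hat{u}$ lies in the same space by exhibiting it as the $(\mathbb{M}^b,\mathbb{M}^{a+b})$-sum of an appropriate genuine function along the multidirection $(d_1,d_2)$. By Definition~\ref{defi257} applied with $\mathbb{M}_1=\mathbb{M}^b$ and $\mathbb{M}_2=\mathbb{M}^{a+b}$ (so that $\mathbb{M}_2/\mathbb{M}_1=\mathbb{M}^a$, and the constraint $\omega(\mathbb{M}^b)<\omega(\mathbb{M}^{a+b})<2$ follows from $\omega(\mathbb{M})<2/(a+b)$ together with Lemma~\ref{lema1}), the hypothesis that $\hat{u}$ is multisummable means precisely that $g:=\mathcal{S}_{\mathbb{M}^a,d_2}(\hat{\mathcal{B}}_{m_1,z}\hat{u})$ exists on a sectorial region $G=G_{d_2}(\theta)$ with $\theta>a\pi\omega(\mathbb{M})$ and extends to $S_{d_1}$ with the extension in $\mathcal{O}^{\mathbb{M}^b}(S_{d_1},\mathbb{E})$. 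This is exactly the setup of Theorem~\ref{teo2}, with $u=g$ there.

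First I would apply Lemma~\ref{lema347}: since $\hat{\mathcal{B}}_{m_1,z}\circ\partial_{m_e,z}=\hat{\mathcal{B}}_{m_e m_1,z}$ after suitable bookkeeping, and more directly since $\mathcal{B}_{m_1,z}\circ\partial_{m_e,z}\equiv\partial_{m_1 m_e,z}\circ\hat{\mathcal{B}}_{m_1,z}$ as formal operators, the formal Borel transform of $\partial_{m_e,z}\hat{u}$ is a moment derivative of $g$'s formal counterpart. Then I would invoke Theorem~\ref{teo2}: part (a) gives the integral representation of $\partial_{m_e,z}^n g$ on $\tilde{S}=S'\cup G'$, and more importantly part (b) gives the bound $\|\partial_{m_e,z}^n g(z)\|_{\mathbb{E}}\le C_4 C_5^n m_e(n)M_n^a\exp(M^b(C_6|z|))$ valid on $\tilde{S}$. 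Taking $n=1$, this shows $\partial_{m_e,z}g$ is holomorphic on $G'\cup S'$, satisfies the $\mathbb{M}^b$-type exponential growth on the infinite sector $S'$ (hence lies in $\mathcal{O}^{\mathbb{M}^b}(S_{d_1},\mathbb{E})$ after shrinking), and by Lemma~\ref{lema484} it admits $\partial_{m_e,z}(\text{germ of }g)$ as its $\mathbb{M}^a$-asymptotic expansion in $G$, i.e.\ it is the $\mathbb{M}^a$-sum there. Running the chain of Definition~\ref{defi257} backwards: $\partial_{m_e,z}g$ is the $\mathbb{M}^a$-sum along $d_2$ of $\hat{\mathcal{B}}_{m_1,z}$ of something, that something admits the $\mathbb{M}^b$-growth continuation to $S_{d_1}$, hence $\partial_{m_e,z}\hat{u}$ is $(\mathbb{M}^b,\mathbb{M}^{a+b})$-summable along $(d_1,d_2)$.

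The main obstacle is verifying the compatibility of the formal Borel transform with moment differentiation at the level of the multisummation scheme — concretely, confirming that $\hat{\mathcal{B}}_{m_1,z}(\partial_{m_e,z}\hat{u})$ equals the formal moment derivative $\partial_{m_e,z}(\hat{\mathcal{B}}_{m_1,z}\hat{u})$ so that its $\mathbb{M}^a$-sum is exactly $\partial_{m_e,z}g$ produced by Theorem~\ref{teo2}(a). This is a direct computation on coefficients: $\hat{\mathcal{B}}_{m_1,z}$ divides the $p$-th coefficient by $m_1(p)$ and $\partial_{m_e,z}$ shifts indices with the factor $m_e(p+1)/m_e(p)$ — since both operators act only through the index and the moment weights, they commute up to the identification built into Definition~\ref{defi:derivative}, and one checks the two resulting series are identical. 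Once this and the identification of $\partial_{m_e,z}g$ as the $\mathbb{M}^a$-sum of that Borel transform are in place (the latter being Lemma~\ref{lema484}), the two growth/regularity conditions (i) and (ii) of Definition~\ref{defi257} for $\partial_{m_e,z}\hat{u}$ are handed to us directly by Theorem~\ref{teo2}(b) with $n=1$, and the proof closes. A minor point to address is that Theorem~\ref{teo2} is stated for $\hat{u}\in\mathbb{E}\{z\}_{\mathbb{M}^a,d_2}$ with continuation in $\mathcal{O}^{\mathbb{M}^b}(S_{d_1})$, which matches the intermediate function $g$ in the multisummation of $\hat{u}$, so one must be careful to apply the theorem to $g$ and not to $\hat{u}$ itself.
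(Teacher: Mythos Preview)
Your overall strategy matches the paper's: unwind Definition~\ref{defi257} to get the intermediate function $g=\mathcal{S}_{\mathbb{M}^a,d_2}(\hat{\mathcal{B}}_{m_1,z}\hat{u})$, apply Theorem~\ref{teo2} and Lemma~\ref{lema484} to a moment derivative of $g$, and read off conditions (i) and (ii) for $\partial_{m_e,z}\hat{u}$. However, there is a genuine error in which moment derivative you take at the Borel level.

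You correctly invoke Lemma~\ref{lema347} at the start, writing $\hat{\mathcal{B}}_{m_1,z}\circ\partial_{m_e,z}\equiv\partial_{m_1 m_e,z}\circ\hat{\mathcal{B}}_{m_1,z}$. But you then apply Theorem~\ref{teo2} and Lemma~\ref{lema484} to compute and bound $\partial_{m_e,z}g$, and in your final paragraph you explicitly claim that $\hat{\mathcal{B}}_{m_1,z}(\partial_{m_e,z}\hat{u})=\partial_{m_e,z}(\hat{\mathcal{B}}_{m_1,z}\hat{u})$, calling it ``a direct computation on coefficients''. That computation actually shows the opposite: if $\hat{u}=\sum_p u_p z^p$, the $p$-th coefficient of the left side is $\frac{u_{p+1}}{m_1(p)}\cdot\frac{m_e(p+1)}{m_e(p)}$, while the right side gives $\frac{u_{p+1}}{m_1(p+1)}\cdot\frac{m_e(p+1)}{m_e(p)}$. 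These differ unless $m_1$ is constant. So $\partial_{m_e,z}g$ is \emph{not} the $\mathbb{M}^a$-sum of $\hat{\mathcal{B}}_{m_1,z}(\partial_{m_e,z}\hat{u})$, and the chain back through Definition~\ref{defi257} breaks.

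The fix is exactly what the paper does: apply Theorem~\ref{teo2} and Lemma~\ref{lema484} with the \emph{product} moment sequence $m_e\cdot m_1$ (which is a moment sequence by Lemma~\ref{lema347}). Then $\partial_{m_e\cdot m_1,z}g$ is the $\mathbb{M}^a$-sum of $\partial_{m_e\cdot m_1,z}(\hat{\mathcal{B}}_{m_1,z}\hat{u})=\hat{\mathcal{B}}_{m_1,z}(\partial_{m_e,z}\hat{u})$, and the growth bound from Theorem~\ref{teo2}(b) with $n=1$ gives the $\mathcal{O}^{\mathbb{M}^b}(S_{d_1},\mathbb{E})$ continuation. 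With that single correction your argument goes through and coincides with the paper's.
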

\begin{proof}
In view of Lemma~\ref{lema1}, we have that $\mathbb{M}^{b}$ is a strongly regular sequence which admits a nonzero proximate order (see Remark 4.8 (i),~\cite{sanz}). This guarantees the existence of a moment sequence $m_{e^b}:=(m_e(p))_{p\ge0}$ associated with some kernel function $e^b$ for $\mathbb{M}^b$-summability. 

Let $\hat{u}\in \mathbb{E}\{z\}_{(\mathbb{M}^{b},\mathbb{M}^{a+b}),(d_1,d_2)}$. It holds that $\hat{\mathcal{B}}_{m_{e^b}}\hat{u}$ is $\mathbb{M}^{a}$-summable in direction $d_2$. Equivalently, there exists $U\in\mathcal{O}(G,\mathbb{E})$, for some sectorial region $G$ of bisecting direction $d_2$ and opening larger than $a\pi\omega(\mathbb{M})$ such that $U$ admits $\hat{\mathcal{B}}_{m_{e^b}}\hat{u}$ as its $\mathbb{M}^{a}$-asymptotic expansion in $G$. Lemma~\ref{lema484} guarantees that $\partial_{m_e\cdot m_{e^b},z}U(z)$ is the $\mathbb{M}^{a}$-sum of $\partial_{m_e\cdot m_{e^b},z}(\hat{\mathcal{B}}_{m_{e^b}}\hat{u})$ in $G$. 

We recall from Lemma~\ref{lema347} that 
$$\partial_{m_e\cdot m_{e^b},z}(\hat{\mathcal{B}}_{m_{e^b}}\hat{u})=\hat{\mathcal{B}}_{m_{e^b}}\left(\partial_{m_e,z}\hat{u}(z)\right).$$
This entails that the formal power series $\hat{\mathcal{B}}_{m_{e^b}}\left(\partial_{m_e,z}\hat{u}(z)\right)$ is $\mathbb{M}^{a}$-summable along direction $d_2$. Moreover, its sum can be extended to an infinite sector $S_{d_1}$ of bisecting direction $d_1$, which belongs to $\mathcal{O}^{\mathbb{M}^{b}}(S_{d_1},\mathbb{E})$ due to the fact that assumptions in Theorem~\ref{teo2} hold, and regarding (\ref{e286}). This allows to conclude that $\partial_{m_e,z}\hat{u}(z)\in\mathbb{E}\{z\}_{(\mathbb{M}^{b},\mathbb{M}^{a+b}),(d_1,d_2)}$.
\end{proof}

As a consequence, a definition of moment derivative for multisums of formal power series can be stated.

\begin{defin}\label{defi624}
Let $m_e=(m_e(p))_{p\ge0}$ be a sequence of moments, and let $\mathbb{M}$ be a strongly regular sequence which admits a nonzero proximate order. We also choose positive numbers $a,b$ such that $\omega(\mathbb{M})<2/(a+b)$. Given any multidirection $(d_1,d_2)\in\R^2$ such that $|d_1-d_2|<a\frac{\omega(\mathbb{M})\pi}{2}$. For every $\hat{f}\in\mathbb{E}\{z\}_{(\mathbb{M}^{b},\mathbb{M}^{a+b}),(d_1,d_2)}$ we define
$$\partial_{m_e,z}(\mathcal{S}_{(\mathbb{M}^{b},\mathbb{M}^{a+b}),(d_1,d_2)}(\hat{f})):=\mathcal{S}_{(\mathbb{M}^{b},\mathbb{M}^{a+b}),(d_1,d_2)}(\partial_{m_e,z}\hat{f}).$$
\end{defin}

We conclude with some properties of multisummable series which will appear in the next section.

\begin{lemma}\label{lema297}
Let $\mathbb{E}$ be the Banach space of holomorphic functions on some nonempty closed neighborhood of the origin, $\overline{D}$, endowed with the supremum norm. Let $\mathbb{M}_j$ for $j=1,2$ be two strongly regular sequences admitting nonzero proximate orders, with $\omega(\mathbb{M}_1)<\omega(\mathbb{M}_2)<2$. Let $(d_1,d_2)\in\R^2$ such that $|d_1-d_2|<\pi(\omega(\mathbb{M}_2)-\omega(\mathbb{M}_1))$. We also consider a moment sequence $m$ and $a(z)\in\mathcal{O}(\overline{D})$.

For every $\hat{f}(t,z)\in\mathbb{E}[[t]]$ with $\hat{f}\in\mathbb{E}\{t\}_{(\mathbb{M}_1,\mathbb{M}_2),(d_1,d_2)}$, one has:
\begin{itemize}
\item $a(z)\hat{f}(t,z)\in\mathbb{E}\{t\}_{(\mathbb{M}_1,\mathbb{M}_2),(d_1,d_2)}$.
\item $\partial_{m,z}\hat{f}\in\mathbb{E'}\{t\}_{(\mathbb{M}_1,\mathbb{M}_2),(d_1,d_2)}$, where $\mathbb{E}'$ is the Banach space of holomorphic functions on some $\overline{D}'\subseteq D$ endowed with the supremum norm.
\end{itemize}
\end{lemma}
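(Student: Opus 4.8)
The plan is to handle the two assertions separately, using the characterization of multisummability from Proposition~\ref{defi256} as the splitting $\hat{f}=\hat{f}_1+\hat{f}_2$ with $\hat{f}_j$ being $\mathbb{M}_j$-summable along $d_j$, and reducing everything to statements about the two summands. For the first bullet, I would write $\hat{f}=\hat{f}_1+\hat{f}_2$ in $\mathbb{E}\{t\}$, so that $a(z)\hat{f}=a(z)\hat{f}_1+a(z)\hat{f}_2$, and it suffices to show that $a(z)\hat{f}_j\in\mathbb{E}\{t\}_{\mathbb{M}_j,d_j}$ for $j=1,2$. This is because $\hat{\mathcal{B}}_{m_j,t}$ commutes with multiplication by the $z$-dependent factor $a(z)$ (it acts only on the $t$-variable), so the $m_j$-Borel transform of $a(z)\hat{f}_j$ is $a(z)$ times the $m_j$-Borel transform of $\hat{f}_j$, which again has positive radius of convergence in $t$; and multiplication by the fixed bounded holomorphic function $a(z)\in\mathcal{O}(\overline{D})$ preserves both holomorphy on the relevant sector $\hat{S}_{d_j}$ and membership in $\mathcal{O}^{\mathbb{M}_j}(\hat{S}_{d_j},\mathbb{E})$, since estimate \eqref{e152} is multiplied only by the constant $\sup_{\overline{D}}|a|$. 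Then Proposition~\ref{defi256} reassembles $a(z)\hat{f}$ as a multisummable series, with sum $a(z)\mathcal{S}_{(\mathbb{M}_1,\mathbb{M}_2),(d_1,d_2)}(\hat{f})$.

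For the second bullet the key point is that $\partial_{m,z}$ also acts only on the $z$-variable, hence commutes with $\hat{\mathcal{B}}_{m_j,t}$. So, writing again $\hat{f}=\hat{f}_1+\hat{f}_2$, I would prove $\partial_{m,z}\hat{f}_j\in\mathbb{E}'\{t\}_{\mathbb{M}_j,d_j}$ for each $j$. The Borel transform $\hat{\mathcal{B}}_{m_j,t}(\partial_{m,z}\hat{f}_j)=\partial_{m,z}(\hat{\mathcal{B}}_{m_j,t}\hat{f}_j)$ has positive radius of convergence in $t$ with values in $\mathbb{E}'$: the sum $g_j$ of $\hat{\mathcal{B}}_{m_j,t}\hat{f}_j$ is holomorphic on a product of a disc in $t$ and the neighborhood $\overline{D}$ in $z$ and extends to $\hat{S}_{d_j}\times\overline{D}$ with the bound \eqref{e152} uniform in $z\in\overline{D}$; applying the integral representation \eqref{e268} for the $m$-moment derivative on a slightly smaller neighborhood $\overline{D}'$ (the circle $\Gamma_z$ can be taken inside $D$ independently of $t$ and of the $\hat{S}_{d_j}$-variable), one differentiates under the $t$-expansion and obtains that $\partial_{m,z}g_j$ is holomorphic on $\hat{S}_{d_j}$ with values in $\mathbb{E}'$ and still satisfies a bound of the form \eqref{e152}, because Cauchy-type estimates coming from $\Gamma_z\subset D$ multiply \eqref{e152} only by a constant depending on $\mathrm{dist}(\overline{D}',\partial D)$. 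Thus $\partial_{m,z}\hat{f}_j$ is $\mathbb{M}_j$-summable along $d_j$ with coefficients in $\mathbb{E}'$, and Proposition~\ref{defi256} gives $\partial_{m,z}\hat{f}\in\mathbb{E}'\{t\}_{(\mathbb{M}_1,\mathbb{M}_2),(d_1,d_2)}$.

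I expect the main obstacle to be the uniformity of all estimates with respect to the spectator variable: one must make sure that when $\hat{f}$ is viewed as a power series in $t$ with coefficients in the Banach space $\mathbb{E}$ of functions of $z$, the Borel sums $g_j(t,z)$ are jointly holomorphic and the exponential-growth bounds \eqref{e152} hold uniformly as $z$ ranges over $\overline{D}$ (resp. $\overline{D}'$), so that the operators $a(z)\cdot$ and $\partial_{m,z}$ — which are bounded operators $\mathbb{E}\to\mathbb{E}$ and $\mathbb{E}\to\mathbb{E}'$ respectively by the integral formula \eqref{e268} and Cauchy estimates — can be applied termwise to the $t$-expansion without destroying convergence or the sectorial bounds. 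Once this Banach-valued bookkeeping is set up, the commutation of both operators with $\hat{\mathcal{B}}_{m_j,t}$ and $T_{e_j,d_j}$ (the latter again acting only in $t$) and the splitting characterization of Proposition~\ref{defi256} make the argument routine.
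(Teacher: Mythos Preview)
Your proposal is correct and uses the same overall structure as the paper's proof---the splitting $\hat{f}=\hat{f}_1+\hat{f}_2$ from Proposition~\ref{defi256}, reducing both bullets to statements about single $\mathbb{M}_j$-summable summands---but it verifies the $\mathbb{M}_j$-summability of $a(z)\hat{f}_j$ and $\partial_{m,z}\hat{f}_j$ via the Borel--Laplace characterization (Definition~\ref{def198}) rather than via the asymptotic-expansion characterization the paper employs. Concretely, for the first bullet the paper multiplies the asymptotic inequality \eqref{e350} directly by $\sup_{\overline{D}}|a|$; for the second, it introduces the remainder $g_n(t,z)=t^{-n}\bigl(f_j(t,z)-\sum_{p<n}a_{j,p}(z)t^p\bigr)$, applies the integral representation \eqref{e268} (with a plain circle $|\omega|=\tilde{r}_j$ in the $z$-variable) to $g_n$, and reads off that $\partial_{m,z}f_j$ admits $\partial_{m,z}\hat{f}_j$ as its $\mathbb{M}_j$-asymptotic expansion. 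Your route instead applies the same integral representation to the Borel sum $g_j$ and checks that the exponential bound \eqref{e152} is preserved. Both arguments rest on the same estimate \eqref{e423} over a fixed circle inside $D$, so neither is materially simpler; the paper's version is slightly more direct in that it never needs to recheck convergence of the Borel series in $\mathbb{E}'$, while yours makes the role of $\partial_{m,z}$ as a bounded operator $\mathbb{E}\to\mathbb{E}'$ more transparent.
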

\begin{proof}

We write $\mathbb{M}_{j}:=(M_{j,p})_{p\ge0}$ for $j=1,2$. Regarding Proposition~\ref{defi256}, one can write $\hat{f}=\hat{f}_1+\hat{f}_2$, with $\hat{f}_j$ being $\mathbb{M}_j$-summable along direction $d_j$, $j=1,2$. We write $\hat{f}_j(t,z)=\sum_{p\ge0}a_{j,p}(z)t^p\in\mathbb{E}[[t]]$ for $j=1,2$. We observe that 
$$a(z)\hat{f}(t,z)=a(z)\hat{f}_1(t,z)+a(z)\hat{f}_2(t,z).$$
For $j=1,2$, there exists a sectorial region $G_j$ of bisecting direction $d_j$ and opening larger than $\omega(\mathbb{M}_j)\pi$, a holomorphic function $f_j\in\mathcal{O}(G\times \overline{D})$, and positive constants $C,A$ such that for all $t\in G'\prec G$ and all $n\in\N$ one has
\begin{equation}\label{e350}
\left\|f_j(t,z)-\sum_{p=0}^{n-1}a_{j,p}(z)t^{p}\right\|_{\mathbb{E}}\le C A^{n}M_{j,n}|t|^{n},
\end{equation}
for all $(t,z)\in G'\times \overline{D}$. Therefore, the series $a(z)\hat{f}_j(t,z)\in\mathbb{E}[[t]]$ is such that
$$\left\|a(z)f_j(t,z)-\sum_{p=0}^{n-1}a(z)a_{j,p}(z)t^{p}\right\|_{\mathbb{E}}\le C\left(\sup_{z\in\overline{D}}|a(z)|\right) A^{n}M_{j,n}|t|^{n},$$
for all $(t,z)\in G'\times \overline{D}$. This entails that $af_j$ is the $\mathbb{M}_{j}$-sum of $a\hat{f}_{j}$ along direction $d_j$. This concludes that $a(z)\hat{f}(t,z)\in\mathbb{E}\{t\}_{(\mathbb{M}_1,\mathbb{M}_2),(d_1,d_2)}$ again in view of Proposition~\ref{defi256}.

For the second part of the proof, it only remains to check that for all $j=1,2$ the formal power series $\partial_{m,z}\hat{f}_{j}(t,z)\in\mathbb{E}'[[t]]$ is $\mathbb{M}_j$-summable along direction $d_j$, for $j=1,2$. We first observe that the coefficients of $\partial_{m,z}\hat{f}_{j}(t,z)$ as a formal power series in $t$ are holomorphic and bounded functions on some common neighborhood of the origin $D'$, due to the properties of $m$. Let 
$$g_n(t,z)=t^{-n}\left(f_j(t,z)-\sum_{p=0}^{n-1}a_{j,p}(z)t^{p}\right)$$
and define $\mathbb{E}''$ as the Banach space of holomorphic and bounded functions in $G'$ with the sup. norm. 
We observe from (\ref{e350}), which is valid for all $z\in \overline{D'}$, that
$$\left\|t^{-n}\left(f_j(t,z)-\sum_{p=0}^{n-1}a_{j,p}(z)t^{p}\right)\right\|_{\mathbb{E}'}\le C A^{n}M_{j,n},$$
for all $n\in\N$. We have obtained that 
\begin{equation}\label{e577}
\left\|g_n(t,z)\right\|_{\mathbb{E}''}\le CA^nM_{j,n}.
\end{equation} 
We write
$$\left\|t^{-n}\left(\partial_{m,z}f_j(t,z)-\sum_{p=0}^{n-1}(\partial_{m,z}a_{j,p}(z))t^{p}\right)\right\|_{\mathbb{E}'}=\left\|\partial_{m,z}\left(t^{-n}\left(f_j(t,z)-\sum_{p=0}^{n-1}a_{j,p}(z)t^{p}\right)\right)\right\|_{\mathbb{E}'}.$$

An analogous argument as that of Theorem~\ref{teo2} yields 
$$\partial_{m,z}g_n(t,z)=\frac{1}{2\pi i}\int_{|\omega|=\tilde{r}_j} g_n(t,\omega)\int_0^{\infty(\tau)}E(z\xi)\frac{e(\omega\xi)}{\omega}d\xi d\omega,$$
for some $\tilde{r}_j<r$, which can be bounded from above taking into account (\ref{e423}) and (\ref{e577}). This way one arrives at
$$\left\|\partial_{m,z}g_n(t,z)\right\|_{\mathbb{E}''}\le CA_0B_0\tilde{r}_jm(1)A^nM_{j,n},$$
or equivalently
$$\left|\partial_{m,z}f_j(t,z)-\sum_{p=0}^{n-1}(\partial_{m,z}a_{j,p}(z))t^{p}\right|\le CA_0B_0\tilde{r}_jm(1)A^nM_{j,n}|t|^{n},$$
valid for all $(t,z)\in G'\times \overline{D}'$. This yields that $\partial_{m,z}f_j(t,z)$ is the $\mathbb{M}_{j}$-sum of $\sum_{p\ge 0}(\partial_{m,z}a_{j,p}(z))t^{p}$ along direction $d_j$.

\end{proof}

\section{Application. Multisummability of formal solutions to singularly perturbed moment differential equations}\label{secfinal}

In this section, we state the main result of the present work, namely the multisummability properties of the formal solutions to certain families of singularly perturbed moment differential equations.

Let $\mathbb{M}=(M_p)_{p\ge0}$ be a strongly regular sequence which admits a nonzero proximate order. 

Let $k, p\in\N$ with $1\le k<p$. Let $m_1=(m_1(p))_{p\ge 0}$ and $m_2=(m_2(p))_{p\ge 0}$ be two sequences of moments associated with two strongly regular sequences admitting nonzero proximate order, and assume that $m_1=(m_1(p))_{p\ge0}$ is the moment sequence associated with $\mathbb{M}^{s_1}$ for some $s_1>0$. Moreover, we assume that $m_2=(m_2(p))_{p\ge0}$ is an $\mathbb{M}$-sequence of order $s_2>0$, i.e., there exist positive constants $\tilde{c}_1,\tilde{c}_2$ such that
$$\tilde{c}_{1}^{p}(M_p)^{s_2}\le m_2(p)\le \tilde{c}_{2}^{p}(M_p)^{s_2},\qquad p\ge0.$$
We also assume that $s_2p>s_1k$, and that $\omega(\mathbb{M})\frac{s_2p}{k}<2$.

 Let $a(z)$ be a holomorphic function on some closed disc centered at the origin, say $\overline{D}$, such that $a(z)^{-1}\in\mathcal{O}(\overline{D})$. Moreover, $\hat{f}(z,\varepsilon)\in\C[[z,\varepsilon]]$ and $\hat{\psi}_j(\varepsilon)\in\C[[\varepsilon]]$ are formal power series.

We consider a singularly perturbed moment differential equation of the form 
\begin{equation}\label{eq:main}
 \left\{
 \begin{aligned}
 \varepsilon^k a(z)\partial_{m_2,z}^p\omega(z,\varepsilon)-\omega(z,\varepsilon)&=\hat{f}(z,\varepsilon)\\
  \partial_{m_2,z}^j\omega(0,\varepsilon)&=\hat{\psi}_j(\varepsilon),\qquad j=0,\ldots, p-1,
 \end{aligned}
 \right.
\end{equation}
where $\varepsilon$ is a small complex parameter. 

The main result of the present work reads as follows.

\begin{theo}\label{teopral}
\begin{itemize}
\item[(i)] There exists a unique formal solution $\hat{\omega}(z,\varepsilon)\in\mathbb{C}[[z,\varepsilon]]$ of (\ref{eq:main}), which belongs to $\mathcal{O}(\overline{D})[[\varepsilon]]$ if $\hat{f}\in\mathcal{O}(\overline{D})[[\varepsilon]]$. 
\item[(ii)] Let $s_1\in(0,\frac{s_2p}{k})$ and choose $(d_1,d_2)\in\R^2$ with 
$$|d_1-d_2|<\frac{\pi\omega(\mathbb{M})}{2}\left(\frac{s_2p}{k}-s_1\right).$$
The following statements are equivalent:
\begin{itemize}
\item[(ii.1)] $\hat{\omega}(z,\varepsilon)$ is $(\mathbb{M}^{s_1},\mathbb{M}^{\frac{s_2 p}{k}})$-summable in the multidirection $(d_1,d_2)\in\R^2$.
\item[(ii.2)]  $\hat{f}(z,\varepsilon)$ and $\partial_{m_2,z}^j\hat{\omega}(0,\varepsilon)$, $j=0,1,\ldots,p-1$, are $(\mathbb{M}^{s_1},\mathbb{M}^{\frac{s_2 p}{k}})$-summable in the multidirection $(d_1,d_2)$.
\end{itemize}
\end{itemize}
\end{theo}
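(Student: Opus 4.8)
The plan is to follow the classical Borel--Laplace strategy adapted to the moment setting: apply the formal $m_1$-Borel transform in $\varepsilon$ to equation \eqref{eq:main}, solve the resulting auxiliary moment partial differential equation explicitly, and then translate its analytic properties back to multisummability statements via the equivalence between multisummability along $(d_1,d_2)$ and the chain of conditions (i) $\mathbb{M}^{s_2p/k-s_1}$-summability of the Borel transform along $d_2$ and (ii) analytic continuation with $\mathbb{M}^{s_1}$-growth along $d_1$, as in Definition~\ref{defi257}. First I would establish part (i): rewriting \eqref{eq:main} as $\omega = \varepsilon^k a(z)\partial_{m_2,z}^p\omega - \hat f$ and comparing coefficients of $\varepsilon^n$ gives a recursion that determines $\hat\omega$ uniquely, with the $\varepsilon^0,\dots,\varepsilon^{k-1}$ coefficients fixed by $\hat f$ and the initial data; the shift by $\varepsilon^k$ together with the smoothing effect of $\partial_{m_2,z}$ on the $z$-variable (using that $m_2$ is an $\mathbb{M}$-sequence, so the associated sequence is strongly regular and moment differentiation preserves $\mathcal{O}(\overline D)$ up to shrinking the disc) shows that if $\hat f\in\mathcal{O}(\overline D)[[\varepsilon]]$ then $\hat\omega\in\mathcal{O}(\overline{D'})[[\varepsilon]]$ for a slightly smaller disc.

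For part (ii), the implication $(ii.1)\Rightarrow(ii.2)$ is the easy direction: if $\hat\omega$ is $(\mathbb{M}^{s_1},\mathbb{M}^{s_2p/k})$-summable along $(d_1,d_2)$, then by Lemma~\ref{lema297} so are $a(z)\hat\omega$ and the moment derivatives $\partial_{m_2,z}^j\hat\omega$, hence also $\varepsilon^k a(z)\partial_{m_2,z}^p\hat\omega$ (using Lemma~\ref{lema296} for the factor $\varepsilon^k$), so $\hat f = \varepsilon^k a(z)\partial_{m_2,z}^p\hat\omega - \hat\omega$ is multisummable; evaluating at $z=0$ and again using Lemma~\ref{lema297} together with continuity of evaluation shows $\partial_{m_2,z}^j\hat\omega(0,\varepsilon)$ is multisummable for each $j$. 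The substantive direction is $(ii.2)\Rightarrow(ii.1)$. Here I would apply $\hat{\mathcal{B}}_{m_1,\varepsilon}$ to \eqref{eq:main}; writing $v(z,\xi) = \hat{\mathcal{B}}_{m_1,\varepsilon}\hat\omega$, the factor $\varepsilon^k$ becomes convolution/multiplication by $\xi^k/\,(\text{moments})$ — more precisely it turns into an integral operator or, after a further $m_2^p/m_1^k$-type rescaling, a genuine moment derivative in a new variable — and \eqref{eq:main} becomes a moment partial differential equation in $(z,\xi)$ of the form $a(z)\,(\text{operator in }\xi)\,\partial_{m_2,z}^p v - v = \hat{\mathcal B}_{m_1,\varepsilon}\hat f$ with transformed initial data $\hat{\mathcal B}_{m_1,\varepsilon}\hat\psi_j$. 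Since $\hat f$ and the $\hat\psi_j$ are multisummable, their Borel transforms are $\mathbb{M}^{s_2p/k-s_1}$-summable along $d_2$ and continue with controlled growth along $d_1$; one solves the auxiliary equation for $v$ by the method of \cite{michalik13} (splitting of the characteristic polynomial in $z$), obtaining $v$ as an explicit combination of $m_e$-Laplace-type integrals of the data against kernels built from the roots, and then checks that $v$ inherits: convergence near the origin, $\mathbb{M}^{s_2p/k-s_1}$-summability along $d_2$ (this is where Lemma~\ref{lema297} and the stability of the relevant summability class under the operations in the explicit formula enter), and analytic continuation to an infinite sector about $d_1$ with $\mathbb{M}^{s_1}$-exponential growth — the last point being exactly the content guaranteed by Theorem~\ref{teo2} applied to the moment derivatives appearing in the solution formula. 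Combining these three properties is precisely Definition~\ref{defi257} for $(\mathbb{M}^{s_1},\mathbb{M}^{s_2p/k})$-summability of $\hat\omega$.

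The main obstacle I anticipate is the growth and summability bookkeeping in the auxiliary equation: one must verify that the transformed forcing term and initial data, fed through the explicit solution operator of the split moment PDE, produce a function lying in $\mathcal{O}^{\mathbb{M}^{s_1}}(S_{d_1})$ with the correct $\mathbb{M}^{s_2p/k-s_1}$-summability along $d_2$, and this requires the sharp moment-derivative estimate \eqref{e286} of Theorem~\ref{teo2} together with the admissibility conditions $s_2p>s_1k$ and $\omega(\mathbb{M})s_2p/k<2$ to guarantee that all sectors, openings $a\pi\omega(\mathbb{M})$ and $b\pi\omega(\mathbb{M})$, and the angular constraint $|d_1-d_2|<\frac{\pi\omega(\mathbb{M})}{2}(\frac{s_2p}{k}-s_1)$ are mutually compatible with the geometry forced by the zeros of the characteristic polynomial. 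A secondary technical point is handling the $\varepsilon^k$ factor cleanly after the Borel transform — whether to treat it via Lemma~\ref{lema347}-type identities relating $\hat{\mathcal B}_{m_1}$ and moment derivatives, or via an auxiliary variable — but this is bookkeeping rather than a genuine difficulty, and the comparison-of-coefficients uniqueness from part (i) ensures the solution constructed from the Borel side is the formal solution.
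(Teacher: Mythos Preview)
Your overall strategy---Borel-transform in $\varepsilon$, solve the resulting auxiliary moment PDE, then read off multisummability via Definition~\ref{defi257}---is exactly the paper's, and your argument for $(ii.1)\Rightarrow(ii.2)$ via Lemmas~\ref{lema296} and~\ref{lema297} matches the paper's proof. But two concrete points in $(ii.2)\Rightarrow(ii.1)$ need repair.

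First, the handling of the $\varepsilon^k$ factor. Rather than Borel-transforming \eqref{eq:main} directly and dealing with a convolution, the paper first substitutes $\hat u=\varepsilon^k\hat\omega$, turning \eqref{eq:main} into $a(z)\partial_{m_2,z}^p\hat u-\varepsilon^{-k}\hat u=\hat f$. Because $\hat u\in\varepsilon^k\C[[z,\varepsilon]]$, Lemma~\ref{lemma_1} gives the exact identity $\hat{\mathcal B}_{m_1,\varepsilon}(\varepsilon^{-k}\hat u)=\partial_{m_1,\varepsilon}^k\hat{\mathcal B}_{m_1,\varepsilon}\hat u$, so the Borel transform yields the clean equation $a(z)\partial_{m_2,z}^p\hat U-\partial_{m_1,\varepsilon}^k\hat U=\hat F$ with $\hat U=\hat{\mathcal B}_{m_1,\varepsilon}\hat u$. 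Lemma~\ref{lema296} then translates conclusions about $\hat u$ back to $\hat\omega$. This is not just bookkeeping: applying $\hat{\mathcal B}_{m_1,\varepsilon}$ to $\varepsilon^k\hat\omega$ directly does \emph{not} produce a local differential operator, so your convolution route would not land in the class of equations you can handle.

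Second---and this is the genuine gap---you propose to solve the auxiliary equation by the characteristic-polynomial splitting of \cite{michalik13}. That method is for \emph{constant-coefficient} moment PDEs; here the coefficient $a(z)$ is variable and there is no characteristic polynomial to factor. The paper instead invokes Theorem~\ref{teoaux} (built on Theorem~4 of \cite{LMS2}): one passes to $\hat W=\partial_{m_2,z}^p\hat U$, rewrites the equation as
\[
\Bigl(1-\tfrac{1}{a(z)}\partial_{m_1,\varepsilon}^k\partial_{m_2,z}^{-p}\Bigr)\hat W=\hat g,
\]
and expands $\hat W=\sum_{q\ge0}\hat\omega_q$ as a Neumann series with $\hat\omega_0=\hat g$ and $\hat\omega_q=a(z)^{-1}\partial_{m_1,\varepsilon}^k\partial_{m_2,z}^{-p}\hat\omega_{q-1}$. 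The estimate \eqref{e286} of Theorem~\ref{teo2} is applied to bound $\|\partial_{m_1,\varepsilon}^n\omega_0\|$ uniformly on $G'\cup S'$, and an induction (using the factor $|z|^{pq}/m_2(pq)$ gained from each $\partial_{m_2,z}^{-p}$, together with $s_2p>s_1k$) shows the series converges on a smaller disc in $z$ with the required $\mathbb{M}^{s_2p/k-s_1}$-summability in $d_2$ and $\mathcal{O}^{\mathbb{M}^{s_1}}(S_{d_1})$ growth. So you have correctly identified Theorem~\ref{teo2} as the engine, but it feeds a Neumann-series argument rather than an explicit kernel formula, and this is essential to accommodate the variable $a(z)$.
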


The proof of the main result is left to the end of the work, and is preceded by some auxiliary results modifying the shape of the main problem or stating asymptotic results of related problems.

Let $\hat{u}=\varepsilon^k\hat{w}$. Then equation \eqref{eq:main} can be rewritten in the form
\begin{equation}\label{eq:2}
 a(z)\partial_{m_2,z}^p \hat{u}(z,\varepsilon)-\varepsilon^{-k}\hat{u}(z,\varepsilon)=\hat{f}(z,\varepsilon).
\end{equation}
Let us write $\hat{u}(z,\varepsilon)=\sum_{n\ge k} u_n(z)\frac{\varepsilon^n}{m_1(n)}$. Then the following lemma holds for the formal operator $\hat{\Bo}_{m_1,\varepsilon}$:

\begin{lemma}\label{lemma_1}
 If $\hat{u}\in\varepsilon^k \C[[z,\varepsilon]]$ then $\hat{\Bo}_{m_1,\varepsilon}(\varepsilon^{-k}\hat{u})=\partial_{m_1,\varepsilon}^k\hat{\Bo}_{m_1,\varepsilon}\hat{u}$.
\end{lemma}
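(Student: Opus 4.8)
The statement to prove is Lemma~\ref{lemma_1}: for $\hat{u}\in\varepsilon^k\C[[z,\varepsilon]]$ one has $\hat{\Bo}_{m_1,\varepsilon}(\varepsilon^{-k}\hat{u})=\partial_{m_1,\varepsilon}^k\hat{\Bo}_{m_1,\varepsilon}\hat{u}$. This is a purely formal identity, so the plan is simply to expand both sides on the monomial basis $\{z^r\varepsilon^n\}$ and compare coefficients. Since both $\hat{\Bo}_{m_1,\varepsilon}$ and $\partial_{m_1,\varepsilon}$ act only on the variable $\varepsilon$ and are $\C[[z]]$-linear, it suffices to treat a single power series in $\varepsilon$ with coefficients regarded as constants (in $\C[[z]]$), i.e. it is enough to check the identity for $\hat{u}=\sum_{n\ge k}u_n\frac{\varepsilon^n}{m_1(n)}$ with $u_n\in\C[[z]]$, exactly the normalization already introduced before the lemma.

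First I would write down the left-hand side. One has $\varepsilon^{-k}\hat{u}=\sum_{n\ge k}u_n\frac{\varepsilon^{n-k}}{m_1(n)}=\sum_{\ell\ge 0}u_{\ell+k}\frac{\varepsilon^{\ell}}{m_1(\ell+k)}$, and applying the definition of the formal moment Borel transform (Definition~\ref{defin:borel-formal}), which divides the coefficient of $\varepsilon^\ell$ by $m_1(\ell)$, gives
\begin{equation*}
\hat{\Bo}_{m_1,\varepsilon}(\varepsilon^{-k}\hat{u})=\sum_{\ell\ge 0}\frac{u_{\ell+k}}{m_1(\ell)\,m_1(\ell+k)}\,\varepsilon^{\ell}.
\end{equation*}
Next I would compute the right-hand side. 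By definition of $\hat{\Bo}_{m_1,\varepsilon}$ we have $\hat{\Bo}_{m_1,\varepsilon}\hat{u}=\sum_{n\ge k}\frac{u_n}{m_1(n)^2}\varepsilon^n$; but it is cleaner to keep the form $\hat{\Bo}_{m_1,\varepsilon}\hat{u}=\sum_{n\ge k}\frac{\tilde u_n}{m_1(n)}\varepsilon^n$ with $\tilde u_n:=u_n/m_1(n)$, so that the $m_1$-differential operator $\partial_{m_1,\varepsilon}$ of Definition~\ref{defi:derivative} acts transparently: $\partial_{m_1,\varepsilon}\bigl(\sum_p\frac{a_p}{m_1(p)}\varepsilon^p\bigr)=\sum_p\frac{a_{p+1}}{m_1(p)}\varepsilon^p$, hence iterating $k$ times, $\partial_{m_1,\varepsilon}^k\bigl(\sum_p\frac{a_p}{m_1(p)}\varepsilon^p\bigr)=\sum_p\frac{a_{p+k}}{m_1(p)}\varepsilon^p$. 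Applying this with $a_p=\tilde u_p$ (and noting $\tilde u_p=0$ for $p<k$) yields
\begin{equation*}
\partial_{m_1,\varepsilon}^k\hat{\Bo}_{m_1,\varepsilon}\hat{u}=\sum_{\ell\ge 0}\frac{\tilde u_{\ell+k}}{m_1(\ell)}\,\varepsilon^{\ell}=\sum_{\ell\ge 0}\frac{u_{\ell+k}}{m_1(\ell)\,m_1(\ell+k)}\,\varepsilon^{\ell},
\end{equation*}
which coincides termwise with the left-hand side computed above. Extending by $\C[[z]]$-linearity back to the two-variable setting $\hat{u}\in\varepsilon^k\C[[z,\varepsilon]]$ completes the argument.

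There is essentially no obstacle here: the only things to be careful about are the bookkeeping of indices (the shift $n\mapsto n-k$ in the multiplication by $\varepsilon^{-k}$, and the fact that $\varepsilon^{-k}\hat u$ lands in $\C[[z,\varepsilon]]$ precisely because $\hat u\in\varepsilon^k\C[[z,\varepsilon]]$, so nothing with a negative power of $\varepsilon$ ever appears) and the bookkeeping of the $m_1$-normalization so that Definitions~\ref{defin:borel-formal} and~\ref{defi:derivative} can be applied in the form in which they are stated. One could alternatively phrase the whole thing as the operator identity $\hat{\Bo}_{m_1,\varepsilon}\circ M_{\varepsilon^{-k}}=\partial_{m_1,\varepsilon}^k\circ\hat{\Bo}_{m_1,\varepsilon}$ on $\varepsilon^k\C[[z,\varepsilon]]$, where $M_{\varepsilon^{-k}}$ denotes multiplication by $\varepsilon^{-k}$; this is the natural companion of the second assertion of Lemma~\ref{lema347} (the intertwining $\hat{\Bo}_{m_1}\circ\partial_{m_2}=\partial_{m_1m_2}\circ\hat{\Bo}_{m_1}$) and will be used in the same spirit to turn \eqref{eq:2} into an auxiliary moment partial differential equation.
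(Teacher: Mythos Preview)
Your proof is correct and follows essentially the same approach as the paper: both compute each side directly by expanding $\hat{u}=\sum_{n\ge k}u_n\frac{\varepsilon^n}{m_1(n)}$ and comparing coefficients, arriving at the common expression $\sum_{\ell\ge 0}\frac{u_{\ell+k}}{m_1(\ell)m_1(\ell+k)}\varepsilon^{\ell}$. The paper's version is slightly terser, while you add the intermediate notation $\tilde u_n=u_n/m_1(n)$ and the explicit remark on $\C[[z]]$-linearity, but the substance is identical.
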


\begin{proof}
Notice that
\begin{multline*}
\hat{\Bo}_{m_1,\varepsilon}(\varepsilon^{-k}\hat{u}(z,\varepsilon))=\sum_{n\ge 0}u_{n+k}(z)\frac{\varepsilon^n}{m_1(n)m_1(n+k)}\\=\partial_{m_1,\varepsilon}^k\sum_{n\ge 0}u_{n+k}(z)\frac{\varepsilon^{n+k}}{(m_1(n+k))^2}=\partial_{m_1,\varepsilon}^k\hat{\Bo}_{m_1,\varepsilon}\hat{u}(z,\varepsilon).
\end{multline*} 
\end{proof}

After applying the formal Borel transform $\hat{\Bo}_{m_1,\varepsilon}$ to both sides of \eqref{eq:2}, from Lemma \ref{lemma_1} we receive
\begin{equation}\label{eq:3}
 a(z)\partial_{m_2,z}^p \hat{U}(z,\varepsilon)-\partial_{m_1,\varepsilon}^k \hat{U}(z,\varepsilon)=\hat{F}(z,\varepsilon),
\end{equation}
where $\hat{U}(z,\varepsilon)=\hat{\Bo}_{m_1,\varepsilon}\hat{u}(z,\varepsilon)$ and $\hat{F}(z,\varepsilon)=\hat{\Bo}_{m_1,\varepsilon}\hat{f}(z,\varepsilon)$.

We fix $s_1,s_2>0$ such that $s_2p>s_1k$. Let $\mathbb{M}=(M_p)_{p\ge0}$ be a strongly regular sequence which admits nonzero proximate order. 

The following result is a direct consequence of Lemma~\ref{lema296}
\begin{lemma}\label{lema526}
The formal power series $\hat{\omega}(z,\varepsilon)$ is $(\mathbb{M}^{s_1},\mathbb{M}^{\frac{s_2p}{k}})$-multisummable (with respect to $\varepsilon$) in the multidirection $(d_1,d_2)$ if and only if  $\hat{u}(z,\varepsilon)$ is $(\mathbb{M}^{s_1},\mathbb{M}^{\frac{s_2p}{k}})$-multisummable (with respect to $\varepsilon$) in the multidirection $(d_1,d_2)$. Let $\omega$ and $u$ be the corresponding $(\mathbb{M}^{s_1},\mathbb{M}^{\frac{s_2p}{k}})$-sums. Then, it holds that $u=\varepsilon^k \omega$.
\end{lemma}

The next result is a direct consequence of Lemma~\ref{lema526} and Definition~\ref{defi257}.

\begin{prop}\label{prop00}
 The formal power series $\hat{w}(z,\varepsilon)$ is $(\mathbb{M}^{s_1},\mathbb{M}^{\frac{s_2 p}{k}})$-multisummable in the admissible multidirection $(d_1,d_2)$ if and only if the following conditions are met
 \begin{enumerate}
  \item $\hat{U}$ is $\mathbb{M}^{\frac{s_2 p}{k}-s_1}$-summable in the direction $d_2$,
  \item the sum of $\hat{U}$, denoted by $U$, is analytically continued to $S_{d_1}$ and $U\in\Oo^{\mathbb{M}^{s_1}}(S_{d_1},\EE)$.
 \end{enumerate}
\end{prop}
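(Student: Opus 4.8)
The plan is to prove Proposition~\ref{prop00} by combining Lemma~\ref{lema526} with the iterated characterization of multisummability provided in Definition~\ref{defi257}, applied to the two sequences $\mathbb{M}_1=\mathbb{M}^{s_1}$ and $\mathbb{M}_2=\mathbb{M}^{\frac{s_2p}{k}}$. By Lemma~\ref{lema526}, the series $\hat{w}$ is $(\mathbb{M}^{s_1},\mathbb{M}^{\frac{s_2p}{k}})$-multisummable in $(d_1,d_2)$ if and only if $\hat{u}=\varepsilon^k\hat{w}$ is, so it suffices to characterize the multisummability of $\hat{u}$. First I would invoke Lemma~\ref{lema1} to confirm that $\mathbb{M}^{s_1}$ and $\mathbb{M}^{\frac{s_2p}{k}}$ are strongly regular sequences admitting nonzero proximate orders, with $\omega(\mathbb{M}^{s_1})=s_1\omega(\mathbb{M})<\frac{s_2p}{k}\omega(\mathbb{M})=\omega(\mathbb{M}^{\frac{s_2p}{k}})<2$ thanks to the standing assumptions $s_1<\frac{s_2p}{k}$ and $\omega(\mathbb{M})\frac{s_2p}{k}<2$, so that Definition~\ref{defi257} applies and the admissibility condition $|d_1-d_2|<\frac{\pi}{2}(\omega(\mathbb{M}^{\frac{s_2p}{k}})-\omega(\mathbb{M}^{s_1}))=\frac{\pi\omega(\mathbb{M})}{2}(\frac{s_2p}{k}-s_1)$ is exactly the hypothesis imposed.

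Next I would unwind Definition~\ref{defi257}(i)--(ii) for $\hat{u}$. The first sequence $m_1$ is precisely the moment sequence associated with $\mathbb{M}^{s_1}$, so the initial formal Borel transform in the multisummability process is $\hat{\mathcal{B}}_{m_1,\varepsilon}$; by construction $\hat{\mathcal{B}}_{m_1,\varepsilon}\hat{u}=\hat{U}$. Condition (i) of Definition~\ref{defi257} then asks that $\hat{U}$ be $\mathbb{M}_2/\mathbb{M}_1$-summable along $d_2$; and $\mathbb{M}_2/\mathbb{M}_1=\mathbb{M}^{\frac{s_2p}{k}}/\mathbb{M}^{s_1}=\mathbb{M}^{\frac{s_2p}{k}-s_1}$ by the definition of the quotient sequence. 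This is exactly condition (1) of Proposition~\ref{prop00}. Condition (ii) of Definition~\ref{defi257} then requires that the $\mathbb{M}^{\frac{s_2p}{k}-s_1}$-sum $U$ of $\hat U$ admit analytic continuation to an infinite sector $S_{d_1}$ of bisecting direction $d_1$ with $U\in\mathcal{O}^{\mathbb{M}^{s_1}}(S_{d_1},\EE)$, which is precisely condition (2) of Proposition~\ref{prop00}. Chaining these equivalences with Lemma~\ref{lema526} closes the argument.

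The one genuinely non-formal point to verify is that the formal Borel transform intervening in the first step of the multisummation of $\hat u$ is indeed $\hat{\mathcal{B}}_{m_1,\varepsilon}$ and not the Borel transform attached to some other kernel for $\mathbb{M}^{s_1}$-summability: here I rely on the fact, recorded after Definition~\ref{def198} and in Theorem~4.23 of~\cite{jkls}, that the notion of $(\mathbb{M}_1,\mathbb{M}_2)$-summability (and its sum) is independent of the kernels chosen, so one is free to use the kernel whose moment sequence is $m_1$. One should also note that $\hat{u}\in\varepsilon^k\C[[z,\varepsilon]]$, so that $\hat{\mathcal{B}}_{m_1,\varepsilon}\hat u$ is a well-defined element of $\EE[[\varepsilon]]$ with the expansion displayed just before Lemma~\ref{lemma_1}, and that this is where $\hat U$ was defined. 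I do not expect a serious obstacle: the proposition is a transcription of the iterated definition of multisummability into the notation of the present problem, the only care needed being the bookkeeping of which sequence plays which role and the identification of the quotient sequence $\mathbb{M}^{\frac{s_2p}{k}}/\mathbb{M}^{s_1}$ with $\mathbb{M}^{\frac{s_2p}{k}-s_1}$.
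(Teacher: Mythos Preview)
Your proposal is correct and follows exactly the approach indicated in the paper, which simply states that the proposition is a direct consequence of Lemma~\ref{lema526} and Definition~\ref{defi257}. You have in fact spelled out the bookkeeping (identifying $\mathbb{M}_2/\mathbb{M}_1$ with $\mathbb{M}^{\frac{s_2p}{k}-s_1}$, checking the admissibility condition on $(d_1,d_2)$, and noting the kernel-independence) more carefully than the paper itself does.
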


By Theorem 4 from \cite{LMS2} the formal power series $\hat{U}(z,\varepsilon)$ is $\mathbb{M}^{\frac{s_2 p}{k}-s_1}$-summable in the direction $d_2$ if and only if $\hat{F}(z,\varepsilon)$ and $\partial_{m_2,z}^j \hat{w}(0,\varepsilon)$, $j=0,1,\ldots,k-1$, are $\mathbb{M}^{\frac{s_2 p}{k}-s_1}$-summable in the same direction $d_2$.



The next result extends Lemma 4 and Theorem 4,~\cite{LMS2}. 

\begin{theo}[Compare Lemma 4 and Theorem 4, \cite{LMS2}]\label{teoaux} 
Let us consider the Cauchy problem 
\begin{equation}\label{e617}
 \left\{
 \begin{aligned}
  (\partial_{m_1,t}^k-a(z)\partial_{m_2,z}^p)u(t,z)&=\hat{f}(t,z)\in\C[[t,z]]\\
  \partial_{m_1,t}^ju(0,z)&=\varphi_j\in\Oo(\overline{D}),
 \end{aligned}
 \right.
\end{equation}
with $D$ being a fixed neighborhood of the origin.
\begin{itemize}
\item[1.] There exists a unique formal solution $\hat{u}(t,z)\in\mathbb{C}[[t,z]]$ of (\ref{e617}), which belongs to $\mathcal{O}(\overline{D})[[t]]$ if $\hat{f}\in\mathcal{O}(\overline{D})[[t]]$.
\item[2.] Let $\mathbb{E}$ denote the Banach space of holomorphic functions in $\overline{D}$ with the norm of the supremum. Assume that $\hat{f}\in\mathcal{O}(\overline{D})[[t]]$. The following statements are equivalent:
\begin{enumerate}
 \item[2.1.] $\hat{u}(t,z)$ is $\mathbb{M}^{\frac{s_2 p}{k}-s_1}$-summable in direction $d_2$ (seen as a formal power series in $t$ with coefficients in $\mathbb{E}$) with sum $u(t,z)$ being an analytic solution of (\ref{e617}), and moreover $u(t,z)\in\Oo^{\mathbb{M}^{s_1}}(S_{d_1},\EE')$, where $\EE'$ stands for the Banach space of holomorphic and bounded functions defined on $D(0,r')$ for some $0<r'<r$, endowed with the supremum norm.
 \item[2.2.] $\hat{f}(t,z)$ and $\partial_{m_2,z}^j\hat{u}(t,0)$ are $\mathbb{M}^{\frac{s_2 p}{k}-s_1}$-summable in direction $d_2$ with sums $f(t,z)$ and $\partial_{m_2,z}^j u(t,0)$, respectively. Moreover, $f(t,z)\in\Oo^{\mathbb{M}^{s_1}}(S_{d_1},\mathbb{E})$ and $\partial_{m_2,z}^ju(t,0)\in\Oo^{\mathbb{M}^{s_1}}(S_{d_1})$.
\end{enumerate}
\end{itemize}
\end{theo}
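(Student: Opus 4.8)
\textbf{Proof plan for Theorem~\ref{teoaux}.}

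The plan is to reduce the problem to the study of the auxiliary moment partial differential equation \eqref{e617} through its formal Borel transform, exactly in the spirit of~\cite{LMS2}, and then to upgrade the estimates so as to track the extra generalized exponential growth along the second direction $d_1$. First I would establish part 1 by a formal computation: plugging $\hat{u}(t,z)=\sum_{n\ge0}u_n(z)\tfrac{t^n}{m_1(n)}$ into \eqref{e617} and matching coefficients of powers of $t$ yields a recursion of the shape $u_{n+k}(z)=\tfrac{m_1(n+k)}{m_1(n)}\big(a(z)\partial_{m_2,z}^p u_n(z)-f_n(z)\big)$ together with the initial data $u_j=\varphi_j$ for $0\le j\le k-1$; since $\partial_{m_2,z}$ maps $\mathcal{O}(\overline{D})$ into $\mathcal{O}(\overline{D}')$ for a slightly smaller disc (shrinking $D$ at each of the finitely many steps, or working on a nested sequence of discs), uniqueness and existence of the formal solution in $\mathcal{O}(\overline{D})[[t]]$ follow. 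This is the routine part.

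For part 2, the direction (2.1)$\Rightarrow$(2.2) is the easier one: assuming $\hat{u}$ is $\mathbb{M}^{\frac{s_2p}{k}-s_1}$-summable in $d_2$ with sum $u(t,z)$ lying in $\mathcal{O}^{\mathbb{M}^{s_1}}(S_{d_1},\mathbb{E}')$, one reads $\hat{f}=\partial_{m_1,t}^k\hat{u}-a(z)\partial_{m_2,z}^p\hat{u}$ off the equation and $\partial_{m_2,z}^j\hat{u}(t,0)$ by restriction; then I invoke Lemma~\ref{lema297} (stability of $\mathbb{M}$-summability under multiplication by $a(z)$ and under $\partial_{m_2,z}$), together with the analog for the $t$-derivative $\partial_{m_1,t}$ — here one uses Lemma~\ref{lema100}-type arguments applied to $\hat{u}$ minus the finitely many initial coefficients — to conclude that $\hat{f}$ and the $\partial_{m_2,z}^j\hat{u}(t,0)$ are summable in $d_2$, and Theorem~\ref{teo2}(b) (with the roles $a\leftrightarrow s_2p/k-s_1$, $b\leftrightarrow s_1$) furnishes exactly the $\mathcal{O}^{\mathbb{M}^{s_1}}(S_{d_1},\cdot)$ growth along $d_1$ for the analytic continuations.

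The converse (2.2)$\Rightarrow$(2.1) is where the real work lies, and it proceeds by constructing the sum directly. I would apply the formal $m_1$-Borel transform $\hat{\mathcal{B}}_{m_1,t}$ to \eqref{e617}, obtaining — by Lemma~\ref{lemma_1} and Lemma~\ref{lema347} — a new equation of the form $(\tau^k-a(z)\,\tilde{\partial}_{m_2,z}^p)V(\tau,z)=G(\tau,z)$ with $V=\hat{\mathcal{B}}_{m_1,t}\hat{u}$, $G=\hat{\mathcal{B}}_{m_1,t}\hat{f}$, which is now genuinely a differential equation (no more $t$-derivatives of infinite order), solvable by the method of characteristics: splitting $\tau^k-a(z)\lambda^p$ with respect to one variable as in~\cite{michalik13}. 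The summability hypothesis on $\hat{f}$ and the initial data translates into convergence of $G$ near the origin plus controlled growth, so one solves for $V$ as a convergent series and shows, by Gronwall-type estimates along the characteristic curves combined with the kernel estimates of Lemma~\ref{lema_1} and Lemma~\ref{lema_2}, that $V$ extends to a sectorial region of opening larger than $\big(\tfrac{s_2p}{k}-s_1\big)\pi\omega(\mathbb{M})$ in direction $d_2$ with $\mathbb{M}^{\frac{s_2p}{k}-s_1}$-type bounds, and further continues to the infinite sector $S_{d_1}$ with $\mathcal{O}^{\mathbb{M}^{s_1}}$ growth; applying the $e$-Laplace transform $T_{e,d_2}$ (Proposition~\ref{prop1}) then recovers $u=S_{\mathbb{M}^{\frac{s_2p}{k}-s_1},d_2}(\hat{u})$, and one checks it solves \eqref{e617} by uniqueness of asymptotic expansions. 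The main obstacle is precisely the bookkeeping in this last step: one must simultaneously control the Borel-plane solution in the \emph{small} disc (convergence), in the \emph{finite sector} around $d_2$ (the $\mathbb{M}^{\frac{s_2p}{k}-s_1}$ estimate, where the constant $\tfrac{s_2p}{k}$ arises from the $p$ moment $z$-derivatives against $k$ moment $t$-derivatives and the order $s_2$ of $m_2$), and in the \emph{infinite sector} around $d_1$ (the coarser $\mathbb{M}^{s_1}$ growth), matching these three regimes along a deformed integration path analogous to $\Lambda_z$ in the proof of Theorem~\ref{teo2}; the condition $\omega(\mathbb{M})\tfrac{s_2p}{k}<2$ and the aperture bound $|d_1-d_2|<\tfrac{\pi\omega(\mathbb{M})}{2}\big(\tfrac{s_2p}{k}-s_1\big)$ are exactly what make all three estimates compatible.
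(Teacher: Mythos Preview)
Your treatment of part 1 and of the implication (2.1)$\Rightarrow$(2.2) matches the paper's argument in substance: formal recursion for existence/uniqueness, and closure of the summability class under sums, products and moment derivatives (the paper cites Corollary~1 of~\cite{LMS2} rather than Lemma~\ref{lema297}, but the content is the same), with Theorem~\ref{teo2}(b) supplying the $\mathcal{O}^{\mathbb{M}^{s_1}}(S_{d_1})$ growth.

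The gap is in your plan for (2.2)$\Rightarrow$(2.1). You propose to apply $\hat{\mathcal{B}}_{m_1,t}$ to \eqref{e617} and claim this produces an equation of the form $(\tau^k - a(z)\tilde\partial_{m_2,z}^p)V = G$, which you would then solve by characteristics as in~\cite{michalik13}. But $\hat{\mathcal{B}}_{m_1,t}$ does \emph{not} convert $\partial_{m_1,t}^k$ into multiplication by $\tau^k$: by Lemma~\ref{lema347} one has $\hat{\mathcal{B}}_{m_1,t}\circ\partial_{m_1,t}^k = \partial_{m_1^2,t}^k\circ\hat{\mathcal{B}}_{m_1,t}$, so you would land on another moment-differential equation, not an algebraic one in $\tau$. (Lemma~\ref{lemma_1} goes in the opposite direction: it turns $\varepsilon^{-k}$ into $\partial_{m_1,\varepsilon}^k$, and \eqref{e617} is already the result of that step.) The characteristic-splitting machinery of~\cite{michalik13} is also tailored to constant coefficients, whereas here $a(z)$ is variable.

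The paper takes a different route. It sets $\hat\omega:=\partial_{m_2,z}^p\hat u$ and rewrites \eqref{e617} as
\[
\Big(1-\tfrac{1}{a(z)}\,\partial_{m_1,t}^{k}\partial_{m_2,z}^{-p}\Big)\hat\omega(t,z)=\hat g(t,z),
\]
with $\hat g$ built from $\hat f$ and the initial data $\partial_{m_2,z}^j\hat u(t,0)$. One then expands by a Neumann series $\hat\omega=\sum_{q\ge0}\hat\omega_q$ with $\hat\omega_0=\hat g$ and $\hat\omega_q=\tfrac{1}{a(z)}\partial_{m_1,t}^k\partial_{m_2,z}^{-p}\hat\omega_{q-1}$. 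The hypothesis (2.2) plus Theorem~\ref{teo2} give uniform bounds $\|\partial_{m_1,t}^n\omega_0\|\le \tilde C_1\tilde C_2^n M_n^{s_2p/k}\exp(M^{s_1}(C_6|t|))$ on $G'\cup S'$, and an induction using Lemma~5 of~\cite{LMS2} shows each $\omega_q$ picks up an extra factor $|z|^{pq}/m_2(pq)$; summing in $q$ converges for $|z|$ small, yielding the sum $\omega$ on $(G\cup S_{d_1})\times D(0,r')$ with the required estimates, and Watson's lemma finishes. The mechanism producing convergence is thus the \emph{integration} $\partial_{m_2,z}^{-p}$ in $z$ at each step, not a Borel transform in $t$.
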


\begin{proof}
The proof heavily rests on that of Lemma 4 and Theorem 4,~\cite{LMS2}. We only give details at the points in which it differs from the proof of those previous results. 

The existence of a unique formal solution $\hat{u}(t,z)$ follows from the recursion satisfied by their coefficients, written as a formal power series  in $t$. Holomorphy of the coefficients is also guaranteed from that recursion formula.

For the second statement, we first observe that the implication ($2.1.\Rightarrow 2.2.)$ follows from the fact that $\mathbb{M}^{s_2p/k-s_1}$-summable formal power series along any fixed direction are compatible with respect to sums, product, and also moment derivation (see Corollary 1,~\cite{LMS2}), we obtain that $\hat{f}(t,z)$ and $\partial_{m_2,z}^{j}\hat{u}(t,0)$ are $\mathbb{M}^{s_2p/k-s_1}$-summable along the same direction. In addition to this, $f(t,z)\in\mathcal{O}^{\mathbb{M}^{s_1}}(S_{d_1},\mathbb{E})$ due to Theorem~\ref{teo2}. It is easy to check that $\partial_{m_2,z}^{j}u(t,0)\in\mathcal{O}^{\mathbb{M}^{s_1}}(S_{d_1})$.

We proceed to prove the implication ($2.2.\Rightarrow 2.1.)$.  Let $D:=D(0,r)$. We denote by $(\mathbb{E},\left\|\cdot\right\|_{\mathbb{E}})$ the Banach space of holomorphic and bounded functions on $\overline{D}$, where $\left\|\cdot\right\|_{\mathbb{E}}$ is the norm defined by 
$$\left\|f(z)\right\|_{r}:=\sum_{p\ge0}|f_p|r^p,$$
for $f\in\mathcal{O}(\overline{D})$, with $f(z)=\sum_{p\ge0}f_pz^p$ for $z\in\overline{D}$. Denoting $\hat{\omega}(t,z):=\partial_{m_2,z}^{p}\hat{u}(t,z)$, one has that $\hat{\omega}(t,z)$ is a formal solution of 
$$\left(1-\frac{1}{a(z)}\partial_{m_1,t}^{k}\partial_{m_2,z}^{-p}\right)\hat{\omega}(t,z)=\hat{g}(t,z),$$
where $\hat{g}(t,z)=a(z)^{-1}\partial_{m_1,t}^{k}(\hat{\psi}_0(t)+z\hat{\psi}_1(t)+\ldots+z^{p-1}\hat{\psi}_{p-1}(t))-a(z)^{-1}\hat{f}(t,z)$, and with $\hat{\psi}_0(t)=\hat{u}(t,0)$, $\hat{\psi}_j(t)=\frac{m_2(0)}{m_2(j)}\partial_{m_2,z}^{j}\hat{u}(t,0)$. We also define
$$\hat{\omega}(t,z)=\sum_{q\ge0}\hat{\omega}_q(t,z),$$
with $\hat{\omega}_0(t,z)=\hat{g}(t,z)$ and $\hat{\omega}_{q}(t,z)=a(z)^{-1}\partial_{m_1,t}^{k}\partial_{m_2,z}^{-p}\hat{\omega}_{q-1}(t,z)$ for $q\ge1$. From the hypotheses made one has that $\hat{\omega}_0(t,z)\in\mathbb{E}[[t]]$ is $\mathbb{M}^{s_2p/k-s_1}$-summable in direction $d_2$, with sum $\omega_0(t,z)\in\mathcal{O}(G\times\overline{D})$, for some sectorial region $G$ of opening larger than $\pi(s_2p/k-s_1)\omega(\mathbb{M})$ and bisecting direction $d_2$. By Theorem 4, \cite{LMS2} and by Theorem~\ref{teo2} we obtain that for every $G'\prec G$ and every $S'\prec S$ there exist positive constants $C_4,C_5,C_6$ such that
$$\left\|\partial_{m_1,t}^{n}\omega_0(t,z)\right\|_{r}\le C_4C_5^nm_1(n)M_n^{\frac{s_2p}{k}-s_1}\exp\left(M^{s_1}(C_6|t|)\right)\le \tilde{C}_1\tilde{C}_2^nM_n^{\frac{s_2p}{k}}\exp\left(M^{s_1}(C_6|t|)\right),$$
for all $t\in G'\cup S'$ and $n\in\N_0$, and some $\tilde{C}_1,\tilde{C}_2>0$. An induction argument and the application of Lemma 5~\cite{LMS2} (see the proof of Theorem 4, \cite{LMS2}) yield that $\hat{\omega}_{q}(t,z)\in\mathbb{E}[[t]]$ is $\mathbb{M}^{s_2p/k-s_1}$-summable in direction $d_2$, and
$$\left\|\partial_{m_1,t}^{n}\omega_q(t,z)\right\|_{\tilde{r}}\le  \tilde{C}_1C^q\tilde{C}_2^{qk+n}C_5^nM_{qk+n}^{\frac{s_2p}{k}}\frac{|z|^{pq}}{m_2(pq)}\exp\left(M^{s_1}(C_6|t|)\right),$$
for all $t\in G'\cup S'$, $z\in \overline{D}$ with $\tilde{r}=|z|$ and with $C=\left\|a(z)^{-1}\right\|_{r}$. It is direct to check that 
$$\sum_{q\ge0}\left\|\partial_{m_1,t}^{n}\omega_q(t,z)\right\|_{\tilde{r}}\le \tilde{C}_1\tilde{C}_3^nM_n^{\frac{s_2p}{k}}\exp\left(M^{s_1}(C_6|t|)\right),$$
valid for some $\tilde{C}_3>0$ and all $t\in G'\prec G$, all $z\in D(0,r')$, for some $r'>0$. This entails that $\omega(t,z):=\sum_{q\ge0}\omega_q(t,z)$ is a holomorphic function on $(G\cup S_{d_1})\times D(0,r')$ which is the $\mathbb{M}^{\frac{s_2p}{k}}$-sum of $\hat{\omega}(t,z)=\sum_{q\ge0}\hat{\omega}_q(t,z)\in\mathbb{E}[[t]]$ along direction $d_2$. A direct application of Watson's lemma (Corollary 4.12,~\cite{sanz}) allows us to conclude that the sum of $\hat{u}(t,z)\in\mathbb{E}[[t]]$, denoted by $u(t,z)$ is an analytic solution of (\ref{e617}) with $u(t,z)\in\Oo^{\mathbb{M}^{s_1}}(S_{d_1},\EE')$, where $\EE'$ stands for the Banach space of holomorphic and bounded functions defined on $D(0,r')$.
\end{proof}

All the previous arguments allow us to conclude with the proof of the main result of the present work.

\begin{proof}[{Proof of Theorem~\ref{teopral}}]
It is straightforward to check that (\ref{eq:main}) admits a unique formal solution which is obtained from the unique solution $\hat{U}(t,z)$ of (\ref{e617}) obtained in Theorem~\ref{teoaux} by reversing the relations $\hat{u}=\varepsilon^k\hat{\omega}$ and $\hat{U}(z,\varepsilon)=\hat{\mathcal{B}}_{m_1,\varepsilon}\hat{u}(z,\varepsilon)$.

If $\hat{\omega}(z,\varepsilon)\in\mathbb{E}[[\varepsilon]]$ is $(\mathbb{M}^{s_1},\mathbb{M}^{\frac{s_2p}{k}})$-summable in $(d_1,d_2)$, it is clear that $a(z)\partial_{m_2,z}^{p}\hat{\omega}(z,\varepsilon)\in\mathbb{E}[[\varepsilon]]$ is also $(\mathbb{M}^{s_1},\mathbb{M}^{\frac{s_2p}{k}})$-summable in $(d_1,d_2)$, and from Lemma~\ref{lema296} and Lemma~\ref{lema297} we have that $\varepsilon^ka(z)\partial_{m_2,z}^{p}\hat{\omega}(z,\varepsilon)$ also belongs to $\mathbb{E}\{\varepsilon\}_{(\mathbb{M}^{s_1},\mathbb{M}^{\frac{s_2p}{k}}),(d_1,d_2)}$. The sum of two elements in $\mathbb{E}\{\varepsilon\}_{(\mathbb{M}^{s_1},\mathbb{M}^{\frac{s_2p}{k}}),(d_1,d_2)}$ remains in that space, which in view of (\ref{eq:main}) entails that 
$$\hat{f}(z,\varepsilon)\in \mathbb{E}\{\varepsilon\}_{(\mathbb{M}^{s_1},\mathbb{M}^{\frac{s_2p}{k}}),(d_1,d_2)}.$$ 

It is clear that $\hat{\omega}\in \mathbb{E}\{\varepsilon\}_{(\mathbb{M}^{s_1},\mathbb{M}^{\frac{s_2p}{k}}),(d_1,d_2)}$ implies $\partial_{m_2,z}^{j}\hat{\omega}(0,\varepsilon)\in\mathbb{C}\{\varepsilon\}_{(\mathbb{M}^{s_1},\mathbb{M}^{\frac{s_2p}{k}}),(d_1,d_2)}$.

We proceed to give a proof for the implication $(ii.2)\Rightarrow (ii.1)$. In view of Proposition~\ref{prop00} one only has to check that $\hat{U}\in\mathbb{E}[[\varepsilon]]$ is $\mathbb{M}^{\frac{s_2p}{k}-s_1}$-summable in direction $d_2$ and its sum can be extended to an infinite sector of bisecting direction $d_1$, say $S_{d_1}$, being that extension in the space $\mathcal{O}^{\mathbb{M}^{s_1}}(S_{d_1},\mathbb{E})$. $\hat{U}$ turns out to be a formal solution of (\ref{eq:3}).

We observe that $\hat{F}(z,\varepsilon)\in\mathbb{E}[[\varepsilon]]$ and $\partial_{m_1,\varepsilon}^{j}\hat{U}(z,0)$ are both $\mathbb{M}^{\frac{s_2p}{k}-s_1}$-summable in direction $d_2$. On the one hand, $\hat{F}(z,\varepsilon)=\hat{\mathcal{B}}\hat{f}(z,\varepsilon)$ and $\hat{f}\in\mathbb{E}[[\varepsilon]]$ is $(\mathbb{M}^{s_1},\mathbb{M}^{\frac{s_2p}{k}})$-multisummable in $(d_1,d_2)$. On the other hand, we have
$$\hat{\mathcal{B}}_{m_1,\varepsilon}(\partial_{m_2,z}^{j}\hat{\omega}(0,\varepsilon))= \partial_{m_2,z}^{j}(\hat{\mathcal{B}}_{m_1,\varepsilon}\hat{\omega}(0,\varepsilon))=\partial_{m_2,z}^{j}(\partial_{m_1,\varepsilon}^{k}\hat{U})(0,\varepsilon),$$
which is $\mathbb{M}^{\frac{s_2p}{k}-s_1}$-summable in $d_2$. 

We are now in conditions to apply Theorem~\ref{teoaux} to arrive at $\hat{u}(z,\varepsilon)\in\mathbb{E}[[\varepsilon]]$ being $(\mathbb{M}^{s_1},\mathbb{M}^{\frac{s_2p}{k}})$-multisummable in $(d_1,d_2)$. We conclude after applying Lemma~\ref{lema296} that $\hat{\omega}(z,\varepsilon)\in\mathbb{E}[[\varepsilon]]$ is $(\mathbb{M}^{s_1},\mathbb{M}^{\frac{s_2p}{k}})$-multisummable in $(d_1,d_2)$.
\end{proof}

According to our best knowledge, the main result of the paper is new even in the case of singularly perturbed differential equations and classical multisummability (for the classical approach to multisummability see Section 10~\cite{balser} or Section 7~\cite{loday}). Namely, putting $\mathbb{M}=m_2=(p!)_{p\ge 0}$ and $s_2=1$ in Theorem \ref{teopral} we conclude that
\begin{corol}
\begin{itemize}
\item[(i)] Let $a(z)\in\Oo(\overline{D})$ be such that also $a(z)^{-1}\in\mathcal{O}(\overline{D})$. Moreover, $\hat{f}(z,\varepsilon)\in\C[[z,\varepsilon]]$ and $\hat{\psi}_j(\varepsilon)\in\C[[\varepsilon]]$ are formal power series. Then there exists a unique formal solution $\hat{\omega}(z,\varepsilon)\in\mathbb{C}[[z,\varepsilon]]$ of the singularly perturbed differential equation of the form
$$\left\{
 \begin{aligned}
 \varepsilon^k a(z)\partial_{z}^p\omega(z,\varepsilon)-\omega(z,\varepsilon)&=\hat{f}(z,\varepsilon)\\
  \partial_{z}^j\omega(0,\varepsilon)&=\hat{\psi}_j(\varepsilon),\quad j=0,\ldots, p-1,
 \end{aligned}
 \right.
$$
where $\varepsilon$ is a small complex parameter. If additionally $\hat{f}\in\mathcal{O}(\overline{D})[[\varepsilon]]$ 
then also $\hat{\omega}(z,\varepsilon)\in\mathcal{O}(\overline{D})[[\varepsilon]]$.
\item[(ii)] Let $s\in(0,\frac{p}{k})$ and choose $(d_1,d_2)\in\R^2$ with 
$|d_1-d_2|<\frac{\pi}{2}(\frac{p}{k}-s)$.
The following statements are equivalent:
\begin{itemize}
\item[(ii.1)] $\hat{\omega}(z,\varepsilon)$ is $(\frac{k}{p},\frac{1}{s})$-summable in the multidirection $(d_1,d_2)$.
\item[(ii.2)]  $\hat{f}(z,\varepsilon)$ and $\partial_{z}^j\hat{\omega}(0,\varepsilon)$, $j=0,1,\ldots,p-1$, are $(\frac{k}{p},\frac{1}{s})$-summable in the multidirection $(d_1,d_2)$.
\end{itemize}
\end{itemize}
\end{corol}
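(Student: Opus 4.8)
The statement is the specialization of Theorem~\ref{teopral} to $\mathbb{M}=m_2=(p!)_{p\ge 0}$ and $s_2=1$, so the plan is simply to check that the standing hypotheses of Theorem~\ref{teopral} reduce to those of the corollary and then to rewrite its conclusion in classical notation. First I would recall that $(p!)_{p\ge 0}$ is strongly regular and admits a (constant) nonzero proximate order, hence it is an admissible choice for $\mathbb{M}$; that $m_2=(p!)_{p\ge 0}$ is trivially an $\mathbb{M}$-sequence of order $s_2=1$ (with $\tilde c_1=\tilde c_2=1$); and that the associated moment derivative is the ordinary derivative, since $\partial_{m_2,z}(z^p)=\tfrac{p!}{(p-1)!}z^{p-1}=p\,z^{p-1}$. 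Consequently \eqref{eq:main} becomes the singularly perturbed differential equation written in the statement, with $\partial_{m_2,z}^p$ replaced by $\partial_z^p$ and the initial conditions $\partial_{m_2,z}^j\omega(0,\varepsilon)$ by $\partial_z^j\omega(0,\varepsilon)$. As the auxiliary sequence $m_1$ one may fix any moment sequence attached to a strong kernel for $\mathbb{M}^{s}$-summability (such a kernel exists because $\mathbb{M}^{s}$ is strongly regular and admits a nonzero proximate order by Lemma~\ref{lema1} and Remark 4.8(i) of~\cite{sanz}; concretely $m_1=(\Gamma(1+sp))_{p\ge 0}$ will do), so that $s_1=s$.

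Next I would verify the remaining hypotheses and translate the summability vocabulary. The condition $1\le k<p$ is assumed, and $s_2 p>s_1 k$ reads $p>sk$, i.e.\ $s<\tfrac{p}{k}$, which is exactly the hypothesis $s\in(0,\tfrac{p}{k})$. By Lemma~\ref{lema1} one has $\omega(\mathbb{M}^{s_1})=s_1\,\omega(\mathbb{M})=s$ and $\omega(\mathbb{M}^{s_2p/k})=\tfrac{p}{k}$; moreover, for a Gevrey sequence of order $\sigma$ the generalized $\mathbb{M}$-summability of~\cite{sanz} coincides with classical $\tfrac{1}{\sigma}$-summability, and in this case Definition~\ref{defi257} reduces to the classical iterated Borel--Laplace definition of multisummability (Section~10 of~\cite{balser}, Section~7 of~\cite{loday}). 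Hence $(\mathbb{M}^{s_1},\mathbb{M}^{s_2p/k})$-summability in the multidirection $(d_1,d_2)$ is precisely classical $(\tfrac{k}{p},\tfrac{1}{s})$-summability in $(d_1,d_2)$ --- the two levels being listed in increasing order, since $s<\tfrac{p}{k}$ gives $\tfrac{k}{p}<\tfrac{1}{s}$ --- and the admissibility condition $|d_1-d_2|<\tfrac{\pi}{2}\bigl(\omega(\mathbb{M}^{s_2p/k})-\omega(\mathbb{M}^{s_1})\bigr)$ becomes $|d_1-d_2|<\tfrac{\pi}{2}\bigl(\tfrac{p}{k}-s\bigr)$, as in the statement.

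With these identifications in place, part~(i) is exactly Theorem~\ref{teopral}(i) (existence and uniqueness of $\hat\omega\in\mathbb{C}[[z,\varepsilon]]$, together with holomorphy of its $\varepsilon$-coefficients when $\hat f\in\mathcal{O}(\overline D)[[\varepsilon]]$), and the equivalence (ii.1)$\Leftrightarrow$(ii.2) follows by substituting the translated quantities into Theorem~\ref{teopral}(ii) and rewriting $\partial_{m_2,z}^j$ as $\partial_z^j$. I do not expect any genuine obstacle here, since the content is a dictionary translation; the one point that needs a word of care is precisely that dictionary between the generalized summability of~\cite{sanz,jkls} and the classical one, and --- if one wishes the corollary without the auxiliary bound $\tfrac{p}{k}<2$ coming from the constraint $\omega(\mathbb{M})\tfrac{s_2p}{k}<2$ in Theorem~\ref{teopral} (which is not needed in the Gevrey case) --- one invokes the classical multisummability theory directly, observing that the entire Borel--Laplace scheme used in the proofs of Theorem~\ref{teoaux} and Theorem~\ref{teopral} goes through verbatim with $\partial_{m_2,z}=\partial_z$.
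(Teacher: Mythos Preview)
Your proposal is correct and follows exactly the paper's approach: the corollary is stated immediately after Theorem~\ref{teopral} as the specialization $\mathbb{M}=m_2=(p!)_{p\ge0}$, $s_2=1$, with no separate proof given. Your observation about the residual constraint $\omega(\mathbb{M})\tfrac{s_2p}{k}<2$ forcing $\tfrac{p}{k}<2$ in the Gevrey case is a genuine point the paper glosses over; your suggested workaround (invoking the classical scheme directly) is the natural remedy.
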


\end{document}